\newcommand{\R}{\mathbb{R}}
\numberwithin{equation}{section}
\newtheorem{theorem}{Theorem}[section]
\newtheorem{lemma}[theorem]{Lemma}
\newtheorem{proposition}[theorem]{Proposition}
\def\XXint#1#2#3{{\setbox0=\hbox{$#1{#2#3}{\int}$ }
\vcenter{\hbox{$#2#3$ }}\kern-.6\wd0}}
\definecolor{purple}{rgb}{0.65, 0, 1}
\definecolor{orange}{rgb}{1,.5,0}
\definecolor{purple}{rgb}{0.5, 0, 0.9}
\definecolor{green}{rgb}{0, 1, 0}
\definecolor{orange}{rgb}{1,.5,0}
\definecolor{gray}{rgb}{.6,.6,.6}
\begin{document}

\title{Remarks on stationary and uniformly-rotating vortex sheets: Flexibility results}

\author{Javier G\'omez-Serrano, Jaemin Park, Jia Shi and Yao Yao}

\begin{abstract}

In this paper, we construct new, uniformly-rotating solutions of the vortex sheet equation bifurcating from circles with constant vorticity amplitude. The proof is accomplished via a Lyapunov-Schmidt reduction and a second order expansion of the reduced system.

\vskip 0.3cm

\textit{Keywords: incompressible, vortex sheet, uniformly-rotating, bifurcation, Lyapunov-Schmidt}

\end{abstract}

\maketitle

\section{Introduction}

 Let us consider a vortex sheet $\omega$: a weak solution of the 2D Euler equation concentrated on a simple closed curve $\Gamma:=\left\{ z(\theta,t) \in \R^2 : \theta \in \mathbb{T} \right\}$ with vortex-sheet strength $\gamma(\theta,t)$, that  is, for all test functions $\varphi \in C^{\infty}_c(\R^2)$, it holds that
 \begin{align*}
 \int_{\R^2} \varphi(x)d\omega(x,t) = \int_{-\pi}^{\pi} \varphi(z(\theta,t)) \gamma(\theta,t)d\theta.
 \end{align*}
 Note that the evolution of $z$ and $\gamma$ is described by 
 \begin{align}
 \partial_t z(\theta,t) = \mathbf{v}(z,t) + c(\theta,t)\partial_\theta z(\theta,t) \label{evolv_curve}\\
 \partial_t \gamma(\theta,t) = \partial_\theta\left( c(\theta,t)\gamma(\theta,t)\right), \label{evolv_strength}
 \end{align}
 where $\text{curl}(\mathbf{v}) = \omega$ and $c(\theta,t)$ represents the reparametrization freedom of the curve \cite{Castro-Cordoba-Gancedo:naive-vortex-sheet,Lopes-Nussenzveig-Schochet:vortex-sheets-BR-formulation,Majda-Bertozzi:vorticity-incompressible-flow,Sulem-Sulem:finite-time-analyticity-rt}. Recall that the (discontinuous) velocity $\mathbf{v}$ on the curve, generated by the vortex sheet on $\Gamma$ is given by the Birkhoff-Rott integral:
  \begin{align}\label{def_BR}
  \mathbf{v}(z(\theta,t),t) : =BR(z,\gamma)(z(\theta,t)) :=  \frac{1}{2\pi} PV\int_{-\pi}^{\pi} \frac{(z(\theta,t)-z(\eta,t))^{\perp}}{|z(\theta,t) - z(\eta,t)|^2} \gamma(\eta,t) d\eta.
  \end{align}

For simplicity, we will omit the word $PV$ in the notation for principal value integral from now on. We will also denote by $\mathbf{v^{\pm}}$ the respective limits of the velocity on the two sides of $\Gamma$ (with $\mathbf{v}^+$ being the limit on the side that $\mathbf{n}$ points into).
  
The main goal of this paper is to find a class of uniformly-rotating vortex sheets, concentrated on a closed curve which is not a circle. This is the first proof of existence of a family of solutions of such kind. We will say that a vortex sheet is uniformly-rotating with angular velocity $\Omega$ if it is stationary in the rotating frame with angular velocity $\Omega$. See Lemma~\ref{lemma_br_eq} for the exact equations satisfied by a uniformly-rotating vortex sheet. The existence of such solutions is not evident a priori since there are rigidity results by G\'omez-Serrano--Park--Shi--Yao \cite{GomezSerrano-Park-Shi-Yao:rotating-solutions-vortex-sheet-rigidity} ruling out their existence in the case $\Omega \leq 0$ and $\gamma > 0$. 
These solutions are also important since they show that one cannot expect any asymptotic stability of the radial vortex sheet with constant strength. See the recent work by Ionescu--Jia \cite{Ionescu-Jia:axisymmetrization-vortex-point} for an asymptotic stability result when the vorticity is made out of a Dirac delta part and a Gevrey smooth part. Previously, similar solutions (uniformly rotating, non-radial) had been found for the 2D Euler equations in the context of vortex patches or smooth, compactly-supported functions \cite{Castro-Cordoba-GomezSerrano:uniformly-rotating-smooth-euler,Castro-Cordoba-GomezSerrano:analytic-vstates-ellipses,Burbea:motions-vortex-patches,Hmidi-Mateu-Verdera:rotating-vortex-patch,Garcia-Hmidi-Soler:non-uniform-vstates-euler,Hassainia-Masmoudi-Wheeler:global-bifurcation-vortex-patches}.

Despite the complexity of the solutions shown in numerical/actual experiments \cite{Krasny:vortex-sheet-icm,VanDyke:fluid-book,Majda:vortex-sheet-iciam}, there have been significant efforts to prove the existence of solutions to \eqref{evolv_curve}--\eqref{evolv_strength} in various settings. For a $L^2_{loc}$ initial velocity whose vorticity has a definite sign, it turns out that there exists a global weak solution \cite{Delort:vortex-sheet,Majda:vortex-sheet} by the works of Delort, and Majda. In case that the vorticity does not have a definite sign, the existence was proved by Lopes Filho--Nussenzveig Lopes--Xin \cite{Lopes-Nussenzveig-Xin:vortex-sheets-reflection} under the assumption that the initial vorticity satisfies a reflection symmetry. For analytic initial data, local-in-time existence of analytic solutions was proved by Sulem--Sulem--Bardos--Frisch \cite{Sulem-Sulem:finite-time-analyticity-rt}.

 The singularity formation was conjectured by Birkhoff--Fisher and Birkhoff \cite{Birkhoff-Fisher:vortex-sheets-roll,Birkhoff:helmholtz-taylor-instability}. For analytic initial data, the possibility that the curvature may blow up in finite time was supported by asymptotic analysis of Moore \cite{Moore:singularity-vortex-sheet} and also verified by numerical simulations by Krasny and Meiron--Baker--Orszag in \cite{Krasny:singularity-vortex-sheet-point-vortex,Meiron-Baker-Orszag:vortex-sheet}.  Note that the system \eqref{evolv_curve} and \eqref{evolv_strength} is known to be ill-posed in $H^{s}$ for $s>\frac{3}{2}$ \cite{Caflisch-Orellana:singular-solutions-ill-posedness-vortex-sheet}. For more comprehensive discussion on the well-posedness theory, we refer to \cite{Majda-Bertozzi:vorticity-incompressible-flow,Saffman:book-vortex-dynamics,Wu:vortex-sheet}.

\subsection{Steady solutions of the vortex sheet}

There are very few known examples of nontrivial steady solutions, and in fact, other than the circle or the line, the list only comprises the segment of length $2a$ and density 
\begin{equation}\label{example_rotate}
\gamma(x) = \Omega \sqrt{a^2 - x^2}, \qquad x \in [-a,a],
\end{equation}
which is a rotating solution with angular velocity $\Omega$ \cite{Batchelor:book-fluid-dynamics} and the family found by Protas--Sakajo \cite{Protas-Sakajo:rotating-equilibria-vortex-sheet}, made out of segments rotating about a common center of rotation with endpoints at the vertices of a regular polygon. We remark that none of these are supported on a closed curve.

Numerically, O'Neil \cite{ONeil:relative-equilibria-vortex-sheets,ONeil:collapse-vortex-sheets} used point vortices to approximate the vortex sheet and compute uniformly rotating solutions and Elling \cite{Elling:vortex-sheet-cusps} constructed numerically self-similar vortex sheets forming cusps. O'Neil \cite{ONeil:point-vortices-vortex-sheets,ONeil:point-vortices-vortex-sheets-pof} also found numerically steady solutions which are combinations of point vortices and vortex sheets.

\subsection{Main strategy}

The main strategy to prove Theorem \ref{rotatingsolution}, the Main Theorem of this paper, is to employ bifurcation theory and try to bifurcate from the simple eigenvalue $b:= \frac{1}{2\pi}\int_{-\pi}^{\pi} \gamma(x,t)dx =2$. However, the standard methods (Crandall-Rabinowitz \cite{Crandall-Rabinowitz:bifurcation-simple-eigenvalues}) fail since the linearized operator around the circle does not satisfy the transversality condition: in other words, the nontrivial zero set is not transversal to the trivial one (disks with constant vorticity amplitude). This phenomenon is usually known in the literature as a \textit{degenerate bifurcation} \cite{Kielhofer:bifurcation-book,Kielhofer:degenerate-bifurcation}. Graphically, this can be seen in Figure \ref{fig_bifurcation}. The problem is that we no longer have a single branch emanating from the disk, but two, and therefore the linearized operator fails to describe the local behaviour at the bifurcation point. To overcome this issue, we first reduce the nonlinear problem to a suitable finite dimensional space by means of a Lyapunov-Schmidt reduction since the restriction of $D\mathcal{F}$ is an isomorphism between Ker$(D\mathcal{F})^{\perp}$ and Im$(D\mathcal{F})$. After having done so, we are left with a finite dimensional system and it is there where we perform a higher order expansion around the bifurcation point, since, as expected by the failure of the transversality condition, the first order approximation is identically zero. We obtain that in suitable coordinates, the zero sets of $\mathcal{F}$ behave as $x^2 - y^2 = 0$ and thus two bifurcation branches emanate from the bifurcation point. The last part of the proof is devoted to handle the higher order terms, which can be controlled if we restrict the bifurcation domain to a suitable small enough neighbourhood. We mention here that this technique had been successfully employed by Hmidi--Mateu \cite{Hmidi-Mateu:degenerate-bifurcation-vstates-doubly-connected-euler} (in the hyperbolic case) and Hmidi--Renault \cite{Hmidi-Renault:existence-small-loops-doubly-connected-euler} (in the elliptic case).

\subsection{Organization of the paper}

The paper is organized as follows. In Section \ref{sec_eqns} we will write down the equations and describe the main spaces used in the proof. Section \ref{sec_proof} will be devoted to the bulk of the proof of Theorem \ref{rotatingsolution}, with the explicit calculations detailed in Appendix \ref{appendix}. Numerical calculations of the bifurcation branch and a brief discussion of the numerical methods employed are presented in Section \ref{sec_numerics}.

\section{The equations and the functional spaces}\label{sec_eqns}

Let $\omega(\cdot,t)=\omega_0(R_{\Omega t})$ be a stationary/rotating vortex sheet solution to the incompressible 2D Euler equation, where $ \omega_0\in \mathcal{M}(\mathbb{R}^2) \cap H^{-1}(\mathbb{R}^2)$. Here $\Omega=0$ corresponds to a stationary solution, and $\Omega\neq 0$ corresponds to a rotating solution.  Assume that $\omega_0$ is concentrated on $\Gamma$. Throughout this paper we will assume that $\Gamma$ is a simple closed curve and $\Omega > 0$. Following \cite[Lemma 2.1]{GomezSerrano-Park-Shi-Yao:rotating-solutions-vortex-sheet-rigidity}, we have that:

\begin{lemma}\label{lemma_br_eq}
Assume $\omega(\cdot,t)=\omega_0(R_{\Omega t} x)$ is a stationary/uniformly-rotating vortex sheet with angular velocity $\Omega\in\mathbb{R}$, and $\omega_0$ is concentrated on $\Gamma$, with $z$ and $\gamma$ defined as above. Then the Birkhoff-Rott integral $BR$ \eqref{def_BR} and the strength $\gamma$ satisfy the following two equations:
\begin{equation}\label{BR1}
(BR-\Omega x^\perp)\cdot \mathbf{n} =\mathbf{v}^+ \cdot \mathbf{n} = \mathbf{v}^- \cdot \mathbf{n} = 0 \quad\text{ on } \Gamma,
\end{equation}
and
\begin{equation}\label{BR2}
(BR(z(\alpha))-\Omega z^\perp(\alpha)) \cdot \mathbf{s}(z(\alpha)) \,\frac{\gamma(\alpha)}{|z'(\alpha)|}= C.
\end{equation}
\end{lemma}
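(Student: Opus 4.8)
The plan is to derive both equations from the stationarity of the rotating vortex sheet in the co-rotating frame, following the structure of \cite[Lemma 2.1]{GomezSerrano-Park-Shi-Yao:rotating-solutions-vortex-sheet-rigidity}. Passing to the rotating frame of angular velocity $\Omega$, the vorticity $\omega_0$ becomes time-independent, and the corresponding velocity field is $\mathbf{v}(x) - \Omega x^\perp$, where $\mathbf{v} = BR(z,\gamma)$ is recovered from $\omega_0$ via the Biot--Savart law (equivalently, the Birkhoff--Rott integral on $\Gamma$). The weak formulation of the 2D Euler equation, tested against time-independent test functions, then forces the vortex sheet to be transported by this frame velocity; since the sheet is supported on the fixed curve $\Gamma$, the normal component of the transport velocity must vanish on $\Gamma$, which is precisely \eqref{BR1}. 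Here one uses that the Birkhoff--Rott integral gives the average $\frac{1}{2}(\mathbf{v}^+ + \mathbf{v}^-)$ of the two one-sided limits, while the jump $\mathbf{v}^+ - \mathbf{v}^-$ is tangential to $\Gamma$ (proportional to $\gamma \mathbf{s}/|z'|$); hence $BR\cdot\mathbf{n} = \mathbf{v}^\pm\cdot\mathbf{n}$, and the common value minus $\Omega x^\perp \cdot \mathbf{n}$ must be zero.

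For \eqref{BR2}, I would use the evolution equation \eqref{evolv_strength} for the strength $\gamma$ together with the Bernoulli-type relation that holds along the sheet. Concretely, in the rotating frame the pressure must be continuous across $\Gamma$ (the sheet carries no surface tension), and the difference of the Bernoulli functions on the two sides, expressed in terms of $\mathbf{v}^\pm$, produces a relation of the form $\partial_\alpha\big( (\overline{\mathbf{v}} - \Omega z^\perp)\cdot \mathbf{s} \cdot \tfrac{\gamma}{|z'|}\big) = 0$ along the curve, where $\overline{\mathbf{v}} = BR(z,\gamma)$. Integrating in $\alpha$ gives that this quantity is a constant $C$, which is \eqref{BR2}. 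Alternatively, and perhaps more cleanly, one can argue directly from \eqref{evolv_curve}--\eqref{evolv_strength}: stationarity in the rotating frame means $\partial_t z = \Omega z^\perp + c\, \partial_\theta z$ for some reparametrization $c$, so $c \,\partial_\theta z = (BR - \Omega z^\perp) - (\text{frame drift})$; matching tangential components determines $c\,\gamma$ up to the structure above, and feeding this into the stationary version of \eqref{evolv_strength}, $\partial_\theta(c\gamma) = 0$, yields the conserved quantity.

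The main obstacle is the careful justification of the one-sided velocity limits and the jump relation: one must verify that $\mathbf{v}^+ - \mathbf{v}^-$ is genuinely tangential with the correct coefficient in $\gamma$, that the principal-value Birkhoff--Rott integral equals the average of the two limits (the classical Plemelj-type formula for the curve $\Gamma$), and that the pressure-continuity/Bernoulli argument is valid at the (low) regularity of a vortex sheet. Since the excerpt explicitly states the lemma is taken from \cite[Lemma 2.1]{GomezSerrano-Park-Shi-Yao:rotating-solutions-vortex-sheet-rigidity}, I would also expect the cleanest exposition to simply recall that reference for the technical limiting arguments and present here only the formal derivation in the rotating frame, emphasizing the two key structural facts (normal component of the transport velocity vanishes; tangential balance combined with conservation of $\gamma$ along characteristics gives the constant $C$).
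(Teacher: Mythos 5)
Your sketch is essentially correct, but note that the paper itself offers no proof of this lemma: it is imported verbatim from \cite[Lemma 2.1]{GomezSerrano-Park-Shi-Yao:rotating-solutions-vortex-sheet-rigidity}, so there is nothing in this paper to compare against beyond the citation. Your outline matches the standard derivation given there: the Plemelj-type formulas $\mathbf{v}^{\pm}=BR\mp\tfrac12\gamma\,\mathbf{s}/|z'|$ give $BR\cdot\mathbf{n}=\mathbf{v}^{\pm}\cdot\mathbf{n}$ and hence \eqref{BR1} from invariance of $\Gamma$ under the relative flow, while your second (``cleaner'') route to \eqref{BR2} --- writing $\partial_t z=\Omega z^{\perp}+\tilde c\,\partial_\theta z$ for a rigidly rotating curve, solving for the reparametrization $c$ in \eqref{evolv_curve} via the tangential component of $BR-\Omega z^{\perp}$, and then using $\partial_\theta(c\gamma)=0$ from \eqref{evolv_strength} --- is precisely the argument in the cited reference; the Bernoulli/pressure-continuity detour is unnecessary and harder to justify at vortex-sheet regularity. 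The only caveat is that your text is a sketch: the jump relations and the identification of $BR$ with the average $\tfrac12(\mathbf{v}^++\mathbf{v}^-)$ are asserted rather than proved, but deferring those technical points to the reference is exactly what the paper does.
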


Note that \eqref{BR2} can be written as
 \begin{align}
 \left( I - P_0 \right) \left[ \left( BR(z,\Gamma)(z(\theta)) - \Omega z(\theta)^{\perp} \right) \cdot \frac{z'(\theta)\gamma(\theta)}{|z'(\theta)|^2} \right]= 0, \label{noevol_strength}
 \end{align}
 where $P_0$ is a projection to the mean, that is, $P_0f := \frac{1}{2\pi} \int_{-\pi}^{\pi} f(\theta)d\theta$. For simplicity, we  also denote $\fint f(\theta)d\theta := \frac{1}{2\pi}\int_{-\pi}^{\pi} f(\theta)d\theta$. Now plugging $z(\theta) = (1+r(\theta))(\cos(\theta),\sin(\theta))$ and $\gamma(\theta) := b + g(\theta)$ into \eqref{BR1}, \eqref{BR2} and \eqref{noevol_strength} yields that
 \begin{align}\label{mainfunctional}
 \mathcal{F}(b,g,r) := (\mathcal{F}_1,\mathcal{F}_2) = (0,0),
 \end{align}
 where
 \begin{align*}
 &\mathcal{F}_1(b,g,r) := \fint_{-\pi}^{\pi}(b+g(\eta))\frac{\left(r'(\theta)\cos(\theta-\eta)-(1+r(\theta))\sin(\theta-\eta)\right)(1+r(\eta))-(1+r(\theta))r'(\theta)}{(1+r(\theta))^2+(1+r(\eta))^2-2(1+r(\theta))(1+r(\eta))\cos(\theta-\eta)}d\eta\\
& \qquad\qquad\qquad + \Omega r'(\theta)(1+r(\theta)), \\
&\mathcal{F}_2 (b,g,r) := \left(I-P_0 \right) \tilde{\mathcal{F}}_2(b,g,r), \\
&\tilde{\mathcal{F}}_2(b,g,r) := \fint_{-\pi}^{\pi}(b+g(\eta))\frac{(1+r(\theta))^2-(r'(\theta)\sin(\theta-\eta)+(1+r(\theta))\cos(\theta-\eta))(1+r(\eta))}{(1+r(\theta))^2+(1+r(\eta))^2-2(1+r(\theta))(1+r(\eta))\cos(\theta-\eta)}d\eta \\
&\qquad\qquad\quad\quad \times \frac{(b+g(\theta))}{r'(\theta)^2+(1+r(\theta))^2}  -\Omega (1+r(\theta))^2 \frac{b+g(\theta)}{r'(\theta)^2+(1+r(\theta))^2}.
 \end{align*}

  Throughout the paper we will work with the following analytic function spaces. Let $c>0$ be a sufficiently small parameter and let $\mathcal{C}_{w}(c)$ be the space of analytic functions in the strip $|\Im(z)| \leq c$. For $k \in \mathbb{N}$, denote
\begin{align*}
X^{k}_{c} := \left\{ f(\theta) \in \mathcal{C}_{w}(c), \quad f(\theta) = \sum_{n=1}^{\infty}a_n\cos(2n\theta), \quad \sum_{\pm} \int_{-\pi}^{\pi}  |f(\theta \pm ic)|^2 + |\partial^k f(\theta \pm ic)|^2 d\theta< \infty \right\} \\
\quad Y^{k}_{c} := \left\{f(\theta) \in \mathcal{C}_{w}(c), \quad f(\theta) = \sum_{n=1}^{\infty}a_n\sin(2n\theta),\quad \sum_{\pm} \int_{-\pi}^{\pi}  |f(\theta \pm ic)|^2 + |\partial^k f(\theta \pm ic)|^2 d\theta< \infty \right\},
\end{align*}

  From now on, due to scaling considerations, we will fix $\Omega = 1$ and $b$ will play the role as bifurcation parameter. It is clear that $\mathcal{F}(b,0,0) = (0,0)$ for all $b\in \R$ since $\mathcal{F}_1(b,0,0) = 0$ and $\tilde{\mathcal{F}}_2(b,0,0)$ is constant. Our main theorem in this paper is the following:   
  \begin{theorem}\label{rotatingsolution}
Let $k \geq 3$, and let $c>0$ be sufficiently small. Then, there exists a curve of solutions $(b,g,r)$ of $\mathcal{F} = (0,0)$, belonging to $\R\times X^{k}_{c}\times X^{k+1}_{c}$ and a neighbourhood of $(b,g,r) = (2,0,0)$, bifurcating from $(b,g,r) = (2,0,0)$ such that $(g,r) \neq (0,0)$.
  \end{theorem}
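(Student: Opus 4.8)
The plan is to carry out a Lyapunov--Schmidt reduction for $\mathcal{F}$ around the point $(b,g,r)=(2,0,0)$ and then analyze the reduced finite-dimensional equation by a second-order Taylor expansion. First I would verify that $\mathcal{F}$ is a well-defined $C^\infty$ (in fact real-analytic) map from a neighbourhood of $(2,0,0)$ in $\R\times X^k_c\times X^{k+1}_c$ into a suitable product target space $Y^{k-1}_c\times Y^{k-1}_c$ (or the appropriate analytic spaces with the right frequency support), using that the Birkhoff--Rott kernel is smoothing on the circle and that the principal-value integral in \eqref{def_BR} gives a bounded operator on these strips after subtracting the singular diagonal part. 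The key structural fact that must be extracted is the linearization $D_{(g,r)}\mathcal{F}(2,0,0)$: one computes that at $b=2$ its kernel is one-dimensional, spanned by a single Fourier mode $(g_*,r_*)$, and that its range has codimension one, with the cokernel direction \emph{not} lying in the range — this is precisely the failure of the Crandall--Rabinowitz transversality condition mentioned in the introduction, so the degenerate-bifurcation machinery (cf.\ Hmidi--Mateu, Hmidi--Renault) is the correct framework rather than classical simple-eigenvalue bifurcation.

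Next I would split $X^k_c\times X^{k+1}_c = \mathrm{Ker} \oplus \mathrm{Ker}^\perp$ and the target $= \mathrm{Im} \oplus \mathrm{Im}^\perp$, write $(g,r) = t(g_*,r_*) + (h_1,h_2)$ with $(h_1,h_2)\in\mathrm{Ker}^\perp$, and project $\mathcal{F}=0$ onto $\mathrm{Im}$. Since $D_{(g,r)}\mathcal{F}(2,0,0)$ restricted to $\mathrm{Ker}^\perp$ is an isomorphism onto $\mathrm{Im}$, the implicit function theorem yields a unique solution $(h_1,h_2) = \Phi(b,t)$, analytic in $(b,t)$, with $\Phi(b,0)=(0,0)$ and $D_t\Phi(2,0)=(0,0)$. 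Substituting back into the one-dimensional complementary equation $(I-\mathrm{proj}_{\mathrm{Im}})\mathcal{F}(b,\,t(g_*,r_*)+\Phi(b,t)) = 0$ reduces everything to a single scalar equation $F(b,t)=0$ with $F(b,0)=0$ for all $b$. Writing $b = 2+\beta$ and Taylor-expanding, the transversality failure forces $\partial_\beta\partial_t F(2,0) = 0$, so the linear-in-$\beta$ part of $F(b,t)/t$ vanishes; the heart of the matter is then to compute the three second-order coefficients and show that, after dividing by $t$, $F(b,t)/t = a\beta + b_1 t + (\text{h.o.t.})$ fails — one must go to the quadratic level and obtain $F(b,t)/t = c_1\beta^2 + c_2 \beta t + c_3 t^2 + \cdots$ with the associated quadratic form $c_1 X^2 + c_2 XY + c_3 Y^2$ indefinite (discriminant $c_2^2 - 4c_1 c_3 > 0$), so that in suitable coordinates the zero set looks like $x^2-y^2=0$. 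An application of Morse's lemma (or a direct factorization of the analytic function $F(b,t)/t$) then produces two distinct analytic branches through $(\beta,t)=(0,0)$, one of which is the trivial branch $t\equiv 0$ and the other of which has $(g,r)\neq(0,0)$; this second branch, pushed back through $\Phi$, is the desired curve of solutions in $\R\times X^k_c\times X^{k+1}_c$.

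The main obstacle I anticipate is the explicit second-order computation: one must expand $\mathcal{F}_1$ and $\mathcal{F}_2$ to quadratic order in $(g,r)$ around $(0,0)$ — keeping careful track of the principal-value integrals, the denominators $(1+r(\theta))^2+(1+r(\eta))^2-2(1+r(\theta))(1+r(\eta))\cos(\theta-\eta)$, the projection $I-P_0$, and the contribution of the second-order corrector $\Phi$ — and then evaluate the resulting Fourier multipliers on the specific mode $(g_*,r_*)$ to extract $c_1,c_2,c_3$ and verify the sign of the discriminant. This is where the choice of which mode to bifurcate from (and the precise role of the constraint relating $g$ and $r$ coming from \eqref{BR1}, which lets one solve for $g$ in terms of $r$ to reduce to a scalar problem) becomes essential, and it is the content that I expect to relegate to Appendix \ref{appendix}. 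A secondary technical point is checking that all the analytic-norm estimates on the strip $|\Im z|\le c$ are uniform as $c\to 0$ and that the corrector map $\Phi$ and the reduced function $F$ inherit analyticity with the claimed regularity $g\in X^k_c$, $r\in X^{k+1}_c$ — the loss/gain of one derivative between $g$ and $r$ must be respected throughout the reduction, which dictates the asymmetric choice of spaces in the statement.
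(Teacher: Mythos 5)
Your plan is the same as the paper's: Lyapunov--Schmidt reduction around $(2,0,0)$, recognition that Crandall--Rabinowitz transversality fails so that one must expand the reduced scalar function to second order, and the identification of an indefinite quadratic form of type $x^2-y^2$ whose zero set yields the nontrivial branches. Two points, however, prevent this from being a proof. First, and decisively, the entire content of the theorem rests on the explicit values of the second-order coefficients of the reduced functional, which you defer. The paper must compute (Lemma~\ref{somevalues}, Proposition~\ref{explicitvalues}) that at $b=2$, $\Omega=1$, with kernel direction $v=(0,\cos 2\theta)$ and cokernel direction $w=(0,\cos2\theta)$, one has $\partial_{bb}F_{red}=2w$, $\partial_{tt}F_{red}=-8w$, $\partial_{tb}F_{red}=0$, so that $F_{red}(b,t)=(b-2)^2-4t^2+o(|b-2|^2+t^2)$. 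These coefficients involve not only the quadratic and cubic expansions of $\mathcal{F}_1,\tilde{\mathcal{F}}_2$ but also the correctors $\hat v=-[D\mathcal{F}]^{-1}\frac{d^2}{dt^2}(I-Q)\mathcal{F}(2,tv)$ and $\tilde v=-[D\mathcal{F}]^{-1}(I-Q)\partial_bD\mathcal{F}(2,0)v$ feeding back through the mixed second derivatives; had the resulting form been definite or degenerate, the theorem would fail or require yet higher order. Without this computation (and the catalogue of singular integrals behind it) nothing is established.

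Second, your description of the branch structure is off. Once you pass to $G:=F(b,t)/t$, the trivial solutions have already been divided out; since the $\beta^2$-coefficient of $G$ is nonzero ($c_1=1$ here), $G(b,0)=(b-2)^2(1+o(1))\neq0$ for $b\neq2$, so \emph{neither} of the two branches of $\{G=0\}$ is $t\equiv0$ --- both are nontrivial, asymptotic to $b-2=\pm2t$. Your statement that ``one of which is the trivial branch $t\equiv0$'' conflates the zero set of $G$ with that of $F=tG$; it is harmless for the existence conclusion but signals a confusion in the factorization step. Relatedly, you invoke Morse's lemma or analytic factorization of $G$, whereas the paper only establishes $C^3$ regularity of $\mathcal{F}$ and instead uses the blow-up $b=x+2$, $t=xy$, $\hat F(x,y)=G/x^2=(1-4y^2)+(1+y^2)\epsilon(x+2,xy)$, and the implicit function theorem at $(x,y)=(0,\tfrac12)$; if you want to keep the Morse/Weierstrass route you must first justify analyticity of the reduced map, which you assert but do not prove. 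Finally, a small misstatement: the failure of transversality is not that the cokernel direction lies outside the range (that is automatic) but that $\partial_bD\mathcal{F}(2,0,0)[v]$ \emph{does} lie in the range, which is exactly what \eqref{value1} shows.
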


\section{Proof of the Main Theorem}\label{sec_proof}

The goal of this section is to prove the existence of non-radial uniformly-rotating vortex sheets.  To do so, we will split the proof into the following steps: first we will prove that the functional $\mathcal{F}$ is $C^3$, next we will study $D\mathcal{F}$ to show that, as mentioned in the introduction, it is a Fredholm operator of index 0, with dim(Ker$(D\mathcal{F})) = 1$. The next step is to apply Lyapunov-Schmidt theory and reduce the problem to a finite (2) dimensional one. In those coordinates, linear expansions fail to be conclusive (all the linear terms vanish) since 2 nontrivial branches emanate from the bifurcation point (as opposed to 1). Instead, we perform a quadratic expansion to determine that locally the bifurcation branches look like two pairs of straight lines (specifically as $x^2-y^2 = 0$ in some well-chosen coordinates) and hence the bifurcation does not trivialize (as if it had been of the type $x^2 + y^2 = 0$). We conclude the proof by handling the higher order terms and showing that they don't alter the quadratic behaviour in a sufficiently small neighbourhood of the bifurcation point.

\subsection{Continuity of the functional}
In this subsection, we will check the regularity of $\mathcal{F}$. As explained above, we will reduce the infinite dimensional problem to a finite dimensional problem and investigate its Taylor expansion up to quadratic order. Hence, we need to check if the functional is regular enough to do so. To this end, we have the following proposition:

\begin{proposition}\label{regularity1}
Let $k\ge 3$. Then there exists a neighborhood $U$ of $(2,0,0) \in \R\times X^{k}_{c}\times X^{k+1}_{c} $ such that $\mathcal{F} \in C^{3}\left( U ; Y^{k}_{c}\times X^{k}_{c} \right)$. 
\end{proposition}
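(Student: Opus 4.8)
The plan is to prove Proposition \ref{regularity1} by showing that each constituent building block of $\mathcal{F}_1$ and $\tilde{\mathcal{F}}_2$ — viewed as a map from the Banach space $\R \times X^k_c \times X^{k+1}_c$ into the target space — is $C^3$, and then assembling these via composition, products, and the bounded linear projection $I - P_0$. The essential point is that $X^k_c$ and $Y^k_c$ are Banach algebras under pointwise multiplication for $k \geq 1$ (and in fact for the relevant range $k \geq 3$, by the usual Sobolev-type estimate $\|fg\|_{H^k} \lesssim \|f\|_{H^k}\|g\|_{H^k}$ applied on each horizontal line $\Im z = \pm c$, together with the fact that products of analytic functions in the strip are analytic in the strip), and that $\theta \mapsto \cos(\theta - \eta)$, $\sin(\theta-\eta)$, the polynomial combinations $(1+r(\theta))$, $r'(\theta)$, etc., all land in these spaces or their shifts. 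The denominators $r'(\theta)^2 + (1+r(\theta))^2$ and the Birkhoff-Rott denominator are bounded away from zero in a neighborhood $U$ of $(2,0,0)$ (at $(2,0,0)$ they equal $1$ and $2 - 2\cos(\theta-\eta)$ respectively), so taking reciprocals is a smooth operation there; this is where the neighborhood $U$ in the statement is needed.

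\textbf{Structure of the argument.} First I would isolate the only genuinely singular object, namely the principal-value kernel
\[
K(\theta,\eta) := \frac{(\cdots)}{(1+r(\theta))^2 + (1+r(\eta))^2 - 2(1+r(\theta))(1+r(\eta))\cos(\theta-\eta)},
\]
whose denominator vanishes to second order on the diagonal $\theta = \eta$. The standard device (as in the vortex-patch literature, e.g. Hmidi--Mateu) is to rewrite the denominator as $|z(\theta) - z(\eta)|^2$ and factor out $4\sin^2\left(\frac{\theta-\eta}{2}\right)$, so that $\mathcal{F}_1$ and $\tilde{\mathcal{F}}_2$ become a sum of a smooth part and a term of the form $\fint \frac{N(\theta,\eta)}{4\sin^2((\theta-\eta)/2)}\, d\eta$ where the numerator $N$ itself vanishes to appropriate order on the diagonal (the $PV$ cancellation), leaving a kernel with an integrable — indeed, after the cancellation, essentially a bounded or at worst logarithmically singular — weight. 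One then shows that the map $(b,g,r) \mapsto \big(\theta \mapsto \fint \mathcal{N}[b,g,r](\theta,\eta)\, d\eta\big)$ is $C^3$ by differentiating under the integral sign three times in the parameters, checking at each order that the resulting $\eta$-integrand is still dominated by an $L^1$ function in $\eta$ uniformly for $\theta$ in the strip and $(b,g,r) \in U$, and that analyticity in $\theta$ (with the quantitative $L^2$ bounds on $\Im z = \pm c$ defining $X^k_c, Y^k_c$) is preserved — the latter by contour-shifting $\eta \mapsto \eta \pm i c'$ for $c' < c$, which is legitimate because all the ingredients extend analytically to a slightly larger strip.

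\textbf{Parity/symmetry bookkeeping.} A separate routine check is that $\mathcal{F}_1$ maps into the odd class $Y^k_c$ and $\mathcal{F}_2$ into the even class $X^k_c$: one verifies that if $g \in X^k_c$ (even, and with only $\cos(2n\theta)$ modes) and $r \in X^{k+1}_c$ then $\mathcal{F}_1(b,g,r)$ is odd and $\mathcal{F}_2(b,g,r)$ is even, and that both have the correct period-$\pi$ structure so that only the prescribed Fourier modes appear; the projection $I - P_0$ in $\mathcal{F}_2$ just removes the mean and is a bounded linear (hence $C^\infty$) operator preserving $X^k_c$. The loss of one derivative from $r \in X^{k+1}_c$ to $\mathcal{F} \in Y^k_c \times X^k_c$ is exactly absorbed by the $r'$ terms appearing in the numerators; the Birkhoff-Rott integral itself is smoothing on the closed curve (it does not lose derivatives once the diagonal singularity is resolved), so the output regularity is governed by the explicit algebraic terms, which involve at most one derivative of $r$.

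\textbf{Main obstacle.} The technical heart — and the step I expect to be most delicate — is the uniform $L^1_\eta$ control of the third parameter-derivatives of the singular kernel after the diagonal cancellation: one must verify that each time we differentiate in $(b,g,r)$ we do not worsen the diagonal singularity beyond integrability, i.e. that the cancellation structure $N(\theta,\theta) = 0$ (and its derivative counterpart) is stable under differentiation in the parameters, and simultaneously that the estimates are uniform across the two boundary lines $\Im z = \pm c$ so that the $X^k_c$/$Y^k_c$ norms (which are $H^k$-type norms on those lines) of the output are controlled. Once the bound ``third $(b,g,r)$-derivative of the integrand is $\le \Phi(\eta)$ with $\Phi \in L^1(\mathbb{T})$, uniformly on $U$ and on $|\Im\theta| \le c$'' is established, the conclusion $\mathcal{F} \in C^3(U; Y^k_c \times X^k_c)$ follows from the dominated-convergence / differentiation-under-the-integral theorem together with the Banach-algebra property for the non-integral factors.
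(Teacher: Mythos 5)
Your proposal follows essentially the same route as the paper, which itself only sketches the argument: invoke the Banach-algebra property of $H^{k}$ for $k\ge 3$, differentiate the explicit integrands in the parameters $(b,g,r)$, handle the diagonal singularity by the standard vortex-patch techniques, and check the parity/Fourier-mode bookkeeping (the paper computes $D\mathcal{F}_1$ explicitly and defers the higher derivatives and singular-integral estimates to references). One caveat on your key step: for $\mathcal{F}_1$ the numerator vanishes only to first order on the diagonal while the denominator vanishes to second order, so the leading kernel is a principal-value, Hilbert-transform-type kernel of size $\sim 1/(\theta-\eta)$ rather than an absolutely integrable one; your dominated-convergence argument should therefore first split off the Hilbert transform of $b+g(\cdot)$ (a bounded operator on these spaces) and apply the $L^1_\eta$-majorant reasoning only to the remainder, exactly the decomposition the paper uses in its numerical scheme.
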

\begin{proof}
Since the stream function, $\omega * \mathcal{N}$, is invariant under rotations, it follows immediately that $\mathcal{F}$ is also invariant under rotation by $\pi$-radians, hence $\mathcal{F}$ has only even Fourier modes. Also the oddness of $\mathcal{F}_1$ and evenness of $\mathcal{F}_2$ follow from the invariance under reflection.
 
 To prove the regularity, we briefly sketch the idea. We impose $k\ge 3$ to ensure that $H^{k}$ is a Banach algebra. It is clear that $\mathcal{F}$ is smooth in $b$. It is also straightforward that, for example, for all $(g,r)$ near $(0,0)\in X^{k}_{c}\times X^{k+1}_{c}$, 
 \begin{align*}
\hspace*{-0.5cm} \frac{d}{dt}\mathcal{F}_1&(b,g+th_1,r+th_2)\bigg|_{t=0} = PV \fint h_1(\eta)\frac{\left(r'(\theta)\cos(\theta-\eta)-(1+r(\theta))\sin(\theta-\eta)\right)(1+r(\eta))-(1+r(\theta))r'(\theta)}{(1+r(\theta))^2+(1+r(\eta))^2-2(1+r(\theta))(1+r(\eta))\cos(\theta-\eta)} \\
 &\quad + (b+g(\eta))\left[ \frac{(h_2'(\theta)\cos(\theta-\eta)-h_2(\theta)\sin(\theta-\eta))(1+r(\eta))-h_2(\theta)r'(\theta)}{(1+r(\theta))^2+(1+r(\eta))^2-2(1+r(\theta))(1+r(\eta))\cos(\theta-\eta)} \right.\\
 &\quad \left. + \frac{\left(r'(\theta)\cos(\theta-\eta)-(1+r(\theta))\sin(\theta-\eta)\right)h_2(\eta)-(1+r(\theta))h_2'(\theta)}{(1+r(\theta))^2+(1+r(\eta))^2-2(1+r(\theta))(1+r(\eta))\cos(\theta-\eta)} \right. \\
 &\quad \left. -\left[\left( r'(\theta)\cos(\theta-\eta)-(1+r(\theta))\sin(\theta-\eta)\right)(1+r(\eta))-(1+r(\theta))r'(\theta) \right] \right. \\
 &\quad \left. \times \frac{\left[ 2(1+r(\theta)h_2(\theta)+2(1+r(\eta)h_2(\eta)-2\cos(\theta-\eta)(h_2(\theta)(1+r(\eta))+h_2(\eta)(1+r(\theta)))\right]}{((1+r(\theta))^2+(1+r(\eta))^2-2(1+r(\theta))(1+r(\eta))\cos(\theta-\eta))^2} \right] d\eta + \Omega h_2'(\theta)\\
& =: D\mathcal{F}_1(b,g,r)[h_1,h_2],
 \end{align*}
 and $D\mathcal{F}_1:\R\times X^{k}_{c}\times X^{k+1}_{c} \mapsto \mathcal{L}(X^{k}_{c}\times X^{k+1}_{c} ; Y^{k}_{c}\times X^{k}_{c})$ is continuous. A similar derivation can be performed for $D\mathcal{F}_2$.  For the higher derivatives, we refer to \cite{Castro-Cordoba-GomezSerrano:existence-regularity-vstates-gsqg,Castro-Cordoba-GomezSerrano:analytic-vstates-ellipses,GomezSerrano:stationary-patches,Hmidi-Mateu-Verdera:rotating-vortex-patch,Renault:relative-equlibria-holes-sqg} for the method to deal with the singular integrals arising throughout the calculations.
\end{proof}

\subsection{Fredholm index of the linearized operator $D\mathcal{F}$}

This subsection is devoted to show that  $D\mathcal{F}$ is Fredholm of index zero. We can make all the calculations explicit, moreover the operator diagonalizes in Fourier modes. We have the following lemmas:

\begin{lemma}\label{linearized}
Let $g(\theta)=\sum_{n=1}^{\infty}a_n \cos(2n\theta)$ and $r(\theta) = \sum_{n=1}^{\infty}b_n\cos(2n\theta)$. Then we have that
\[
D\mathcal{F}(b,0,0)\left[ g, r\right] = \left(\begin{array}{c}\hat{g}(\theta) \\ \hat{r}(\theta) \end{array}\right),
\]
where
\[ \hat{g}(\theta) = \sum_{n=1}^{\infty} \hat{a}_n \sin(2n\theta), \quad \hat{r}(\theta) = \sum_{n=1}^{\infty} \hat{b}_n \cos(2n\theta),
\]
and the coefficients satisfy, for any $n \ge 1$:
\[
M_n
\begin{pmatrix}
a_n \\
b_n
\end{pmatrix}
:=
\begin{pmatrix}
-\frac{1}{2} & -2n\left( \Omega - \frac{b}{2}\right) \\
\frac{b}{2}-\Omega & b^2(n-1)
\end{pmatrix}
\begin{pmatrix}
a_n \\
b_n
\end{pmatrix}
=
\begin{pmatrix}
\hat{a}_n \\
\hat{b}_n
\end{pmatrix}.
\]
\end{lemma}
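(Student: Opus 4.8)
The plan is to start from the explicit first variation of $\mathcal{F}$ and evaluate it at the trivial profile $(g,r)=(0,0)$. The formula for $D\mathcal{F}_1(b,g,r)[h_1,h_2]$ is already recorded in the proof of Proposition~\ref{regularity1}; the companion formula for $D\tilde{\mathcal{F}}_2$ follows from the product and quotient rules applied to the building blocks of $\tilde{\mathcal{F}}_2$ in \eqref{mainfunctional} — the singular integral, the factor $(b+g(\theta))/(r'(\theta)^2+(1+r(\theta))^2)$, and the term $-\Omega(1+r(\theta))^2$ — together with the differentiation of the Birkhoff--Rott kernel already performed for $\mathcal{F}_1$, and then $D\mathcal{F}_2=(I-P_0)D\tilde{\mathcal{F}}_2$. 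Setting $(g,r)=(0,0)$, the curve becomes the unit circle: $r=r'\equiv0$, $\gamma\equiv b$, and every denominator collapses to $2(1-\cos(\theta-\eta))$.

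The main step is then to simplify the resulting singular integral operators. For the first component, after inserting $r=r'=0$, the terms coming from the $h_2$ (that is, $r$-) variation regroup so that all genuinely singular pieces — those carrying a $1/(1-\cos(\theta-\eta))^2$ or an unpaired $1/(1-\cos(\theta-\eta))$ factor — cancel identically; what survives, together with the contribution of the elementary term $\Omega r'(\theta)(1+r(\theta))$, is the pointwise expression $(\Omega-\tfrac b2)h_2'(\theta)$, while the $h_1$ (that is, $g$-) variation reduces to $-\tfrac12$ times the conjugate function $Hh_1(\theta):=\fint\cot\!\big(\tfrac{\theta-\eta}{2}\big)h_1(\eta)\,d\eta$. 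For the second component a similar regrouping — writing $2-\cos(\theta-\eta)=(1-\cos(\theta-\eta))+1$ and $-\cos(\theta-\eta)=(1-\cos(\theta-\eta))-1$ to expose the cancellations — turns the surviving integral into $\tfrac b2$ times the (now convergent) operator $h\mapsto\fint\frac{h(\theta)-h(\eta)}{1-\cos(\theta-\eta)}\,d\eta$, every other contribution being algebraic in $h_1(\theta)$ and $h_2(\theta)$.

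It remains to apply these model operators to a single Fourier mode. I would use the classical identities, valid for integers $m\ge1$,
\[
\begin{gathered}
\fint\cot\!\Big(\tfrac{\theta-\eta}{2}\Big)\cos(m\eta)\,d\eta=\sin(m\theta),\qquad
\fint\cot\!\Big(\tfrac{\theta-\eta}{2}\Big)\,d\eta=0,\\
\fint\frac{\cos(m\theta)-\cos(m\eta)}{1-\cos(\theta-\eta)}\,d\eta=m\cos(m\theta),
\end{gathered}
\]
the last of which reduces, after the change of variables $\eta=\theta-\psi$ and discarding the odd (in $\psi$) part, to $\frac1{2\pi}\int_{-\pi}^{\pi}\frac{1-\cos(m\psi)}{1-\cos\psi}\,d\psi=m$. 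Plugging $g=a_n\cos(2n\theta)$ and $r=b_n\cos(2n\theta)$ (so $r'=-2nb_n\sin(2n\theta)$) into the simplified expressions and collecting the coefficient of $\sin(2n\theta)$ in the first component and of $\cos(2n\theta)$ in the second yields exactly the two rows of $M_n$ (keeping $\Omega$ symbolic, even though $\Omega=1$ is fixed elsewhere); note that the $\Omega$-dependent contributions to $\hat b_n$ cancel, which is why the $(2,2)$ entry carries no $\Omega$. That the image lies in $Y^{k}_{c}\times X^{k}_{c}$ and involves only the modes $2n$ is inherited from the parity and $\pi$-periodicity already established in Proposition~\ref{regularity1}, and since each $\cos(2n\theta)$ has zero mean the projection $I-P_0$ acts as the identity here.

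The only genuine difficulty is the bookkeeping in the second paragraph: one has to differentiate the doubly singular Birkhoff--Rott kernel carefully and check that the most singular contributions truly cancel, so that each model operator is well defined on $X^{k}_{c}$. Once this is in place, the Fourier evaluation is entirely routine.
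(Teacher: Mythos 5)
Your proposal is correct and follows essentially the same route as the paper: the paper also computes the linearization at $(g,r)=(0,0)$ (Lemma~\ref{derivative}, obtained by collecting the first-order terms of the $A_i$ expansions) and then evaluates the two resulting model operators on single Fourier modes via the identities \eqref{lemma2} and \eqref{lemma1}, which are exactly your Hilbert-transform and $\Lambda=(-\Delta)^{1/2}$ identities up to the normalization $2-2\cos$ versus $1-\cos$. One small slip in your sketch: since the prefactor $A_5=(b+g(\eta))(b+g(\theta))$ evaluates to $b^2$ on the trivial state, the surviving integral in the second component is $\tfrac{b^2}{2}\fint\frac{h(\theta)-h(\eta)}{1-\cos(\theta-\eta)}\,d\eta$ rather than $\tfrac{b}{2}$ times that operator; with this correction the mode-by-mode computation does yield $b^2(n-1)$ and the matrix $M_n$ as claimed.
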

\begin{proof}
 We use \eqref{linearf1} in Lemma~\ref{derivative} and obtain
\begin{align*}
\hat{g}(\theta) = \frac{d}{dt}\mathcal{F}_1(b,tg,tr) &=  - \fint \frac{g(\eta)\sin(\theta-\eta)}{2-2\cos(\theta-\eta)}d\eta + \left( \Omega- \frac{b}{2}\right) r'(\theta)\\
& = -\sum_{n=1}^{\infty}a_n\fint \frac{\cos(2n\eta)\sin(\theta-\eta)}{2-2\cos(\theta-\eta)}d\eta + \sum_{n=1}^{\infty}(-2n)\left( \Omega - \frac{b}{2} \right)b_n \sin(2n\theta) \\
& = \sum_{n=1}^{\infty}\left ( -\frac{a_n}{2}  + (-2n)\left( \Omega - \frac{b}{2}\right) b_n \right)\sin(2n\theta),
\end{align*}
where the last equality follows from ~\eqref{lemma2}. Similarly, we apply \eqref{linearf2} in Lemma~\ref{derivative} and \eqref{lemma1} to obtain
\begin{align*}
\hat{r}(\theta) & = \left( \frac{b}{2} - \Omega \right) g(\theta) + b^2\left( \fint \frac{r(\theta)-r(\eta)}{2-2\cos(\theta-\eta)}d\eta - r(\theta)\right)\\
& = \sum_{n=1}^{\infty} \left(\frac{b}{2} - \Omega \right) a_n\cos(2n\theta) + b^2 \sum_{n=1}^{\infty} b_n \left(  \fint \frac{\cos(2n\theta)-\cos(2n\eta)}{2-2\cos(\theta-\eta)}d\eta - \cos(2n\theta) \right)\\
& = \sum_{n=1}^{\infty} \left( \left(\frac{b}{2} - \Omega \right) a_n+ b^2\left( n - 1\right)b_n \right)\cos(2n\theta).
\end{align*}
This proves the lemma.
\end{proof}

\begin{lemma}\label{onedimensionality}
Let us fix $b=2$ and $\Omega = 1$. We also denote $v:=(0,\cos(2\theta)) \in X^k_{c}\times X^{k+1}_{c}$ and $w := (0,\cos(2\theta)) \in Y^k_{c} \times X^k_{c}$. Then it holds that 
\begin{align*}
&\text{Ker}\left(D\mathcal{F}(2,0,0)\right) = \text{span}\left\{ v \right\} \subset{X^k_{c}\times X^{k+1}_{c}},\\
& \text{Im}\left(D\mathcal{F}(2,0,0)\right)^{\perp} = \text{span}\left\{ w \right\}  \subset Y^{k}_{c}\times X^{k}_{c}.
\end{align*}
\end{lemma}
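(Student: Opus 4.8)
The plan is to work entirely from the explicit diagonalization in Lemma~\ref{linearized}, specialized to $b=2$ and $\Omega=1$. With these values the matrix becomes
\[
M_n = \begin{pmatrix} -\tfrac12 & 0 \\ 0 & 4(n-1) \end{pmatrix},
\]
because $\Omega - \tfrac{b}{2} = 0$ and $b^2 = 4$. So the linearized operator acts diagonally mode-by-mode, and $M_n$ is invertible precisely when $n \neq 1$ (for $n \geq 2$ both diagonal entries are nonzero), while $M_1 = \mathrm{diag}(-\tfrac12, 0)$ is singular with a one-dimensional kernel spanned by $(a_1,b_1) = (0,1)$.

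First I would compute the kernel. If $(g,r) = \big(\sum a_n \cos(2n\theta), \sum b_n \cos(2n\theta)\big)$ lies in $\mathrm{Ker}(D\mathcal{F}(2,0,0))$, then $M_n(a_n,b_n)^T = 0$ for every $n$. For $n \geq 2$ invertibility of $M_n$ forces $a_n = b_n = 0$. For $n = 1$, $M_1(a_1,b_1)^T = (-\tfrac12 a_1, 0)^T = 0$ forces $a_1 = 0$ but leaves $b_1$ free. Hence the only surviving mode is $(g,r) = (0, b_1\cos(2\theta))$, i.e.\ $\mathrm{Ker}(D\mathcal{F}(2,0,0)) = \mathrm{span}\{v\}$ with $v = (0,\cos(2\theta))$. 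One should check $v$ indeed sits in $X^k_c \times X^{k+1}_c$, which is immediate since $\cos(2\theta)$ is entire and all norms are finite.

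Next I would identify the image and its orthogonal complement in $Y^k_c \times X^k_c$. Since $D\mathcal{F}(2,0,0)$ sends the $n$-th mode to the $n$-th mode via $M_n$, the image is $\{(\sum \hat a_n \sin(2n\theta), \sum \hat b_n \cos(2n\theta)) : (\hat a_n,\hat b_n) \in \mathrm{Im}(M_n)\}$ (intersected with the appropriate analytic class). For $n \geq 2$, $M_n$ is onto $\R^2$, so there is no constraint on those modes. For $n = 1$, $\mathrm{Im}(M_1) = \mathrm{span}\{(1,0)\}$ — that is, the second component of the first mode must vanish. Therefore a pair $(\hat g, \hat r) = (\sum \hat a_n \sin(2n\theta), \sum \hat b_n \cos(2n\theta))$ lies in the image iff $\hat b_1 = 0$, i.e.\ iff it is $L^2$-orthogonal to $w = (0,\cos(2\theta))$. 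A density/closedness remark is needed here to pass from "orthogonal to $w$" to "in the image" at the level of the analytic spaces rather than just $L^2$: because the image is explicitly the closed codimension-one subspace cut out by the single bounded linear functional $(\hat g,\hat r) \mapsto \hat b_1 = \fint \hat r(\theta)\cos(2\theta)\,d\theta$, we get $\mathrm{Im}(D\mathcal{F}(2,0,0))^\perp = \mathrm{span}\{w\}$ directly.

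The main (very mild) obstacle is bookkeeping rather than genuine difficulty: one must be careful that the Fourier-series characterization of $X^k_c, Y^k_c$ really does let us read off kernel and image term by term, and that the functionals involved are continuous on these spaces so that the image is closed (giving a well-defined orthogonal complement and, together with $\dim\mathrm{Ker} = \dim\mathrm{coker} = 1$, the Fredholm-index-zero statement used later). Modulo these routine checks, the lemma follows immediately from plugging $b=2$, $\Omega=1$ into $M_n$ and examining the rank of $M_1$ versus $M_n$ for $n\ge 2$.
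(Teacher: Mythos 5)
Your proposal is correct and follows essentially the same route as the paper: specialize $M_n$ to $b=2$, $\Omega=1$, note that $M_n$ is an isomorphism for $n\ge 2$ while $\mathrm{Ker}(M_1)=\mathrm{Im}(M_1)^{\perp}=\mathrm{span}\{(0,1)^T\}$, and conclude by orthogonality of the Fourier modes. The extra remarks you add about closedness of the image and membership of $v$ in the analytic spaces are routine and consistent with what the paper leaves implicit.
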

\begin{proof}
From Lemma~\ref{linearized}, we have
\begin{align*}
M_n =
\begin{pmatrix}
-\frac{1}{2} & 0 \\
0  & 4(n-1)
\end{pmatrix},
\end{align*}
for all $n \ge 1$.
For all $n\ge 2$, $M_n$ is clearly an isomorphism, while $\text{Ker}(M_1) = \text{Im}(M_1)^{\perp} = \left(\begin{array}{c} 0  \\ 1 \end{array}\right)$. By orthogonality of Fourier modes, this proves the lemma.
\end{proof}

 \subsection{Lyapunov-Schmidt reduction}\label{lsreduction}
In this subsection, we will aim to derive a finite dimensional system which is equivalent to \eqref{mainfunctional}. From Lemma~\ref{onedimensionality}, we have the following orthogonal decompositions of the function spaces:
\begin{align*}
 X := X^k_{c}\times X^{k+1}_{c} = \text{span}\left\{ v \right\} \oplus \text{Ker}\left(D\mathcal{F}(2,0,0)\right)^{\perp} =: \text{span}\left\{ v \right\} \oplus \mathcal{X}, \quad v\in \text{Ker}\left(D\mathcal{F}(2,0,0)\right),\\
 Y := Y^{k}_{c} \times X^{k}_{c}= \text{span}\left\{ w \right\} \oplus \text{Im}\left(D\mathcal{F}(2,0,0)\right) =: \text{span}\left\{ w \right\} \oplus \mathcal{Y},  \quad w\in \text{Im}\left(D\mathcal{F}(2,0,0)\right)^{\perp},
\end{align*}
where $v$ and $w$ are as defined in Lemma~\ref{onedimensionality}. Let us consider the orthogonal projections 
\[
 P : X \rightarrow \text{span}\left\{ v \right\},\quad 
 Q: Y \rightarrow \text{span}\left\{ w \right\}. 
\]
 More precisely, we have
 \begin{align}
& P(g(\theta),r(\theta)) = \left( 0 , \left(\frac{1}{\pi}\int r(\eta)\cos(2\eta)d\eta\right) \cos(2\theta) \right) \quad \text{ for all }(g,r)\in X^{k}_{c}\times X^{k+1}_{c}, \label{defofpq}\\
 &Q(G(\theta),R(\theta)) =  \left( 0 , \left(\frac{1}{\pi}\int R(\eta)\cos(2\eta)d\eta\right) \cos(2\theta) \right) \quad \text{ for all } (G,R) \in Y^{k}_{c}\times X^{k}_{c}. \label{defofpq1}
 \end{align}
  We remark that we will sometimes abuse notation and identify $\mathcal{F}(b,g,r)$ with $\mathcal{F}(b,(g,r))$, where $(g,r) \in X$. Let us define $G:\R \times \text{span}\left\{ v \right\} \times \mathcal{X} \mapsto {Y}$ as follows: 
  \begin{align*}
  G(b,f,x):=\mathcal{F}(b,f+x), \quad\text{ for }\quad b\in \R,\quad  f\in \text{span}\left\{ v \right\},\quad x\in \mathcal{X}.
  \end{align*}
  Then \eqref{mainfunctional} is equivalent to (for $(g,r) = f+x$)
  \begin{align}\label{Reduction1}
  QG(b,f,x)=0 \quad \text{ and }\quad \left( I-Q\right)G(b,f,x)=0.
  \end{align}
 However, it follows from Lemma~\ref{onedimensionality} that 
 \begin{align}\label{isomorphism}
 D_x \left((I-Q)G\right)(b,0,0)=\left(I-Q\right)D\mathcal{F}(b,0)P : \mathcal{X}\mapsto \mathcal{Y}
 \end{align}
  is an isomorphism, consequently, the implicit function theorem yields that there is an open set $U\subset \R\times \text{span}\left\{ v \right\}$ near $(b,0)$ and a function $\varphi:U\mapsto \mathcal{X}$ such that 
  \begin{align*}
  \left( I-Q\right)G(b,f,\varphi(b,f)) = \left(I-Q\right)\mathcal{F}(b,f+\varphi(b,f))=0.
  \end{align*}
 Note that from $\mathcal{F}(b,0)=(0,0)$ for any $b\in \R$, we have 
 \begin{align}
 \label{trivial_solution}
 \varphi(b,0)=0,
 \end{align}
 and thus \eqref{Reduction1} is equivalent to
 \begin{align}\label{Reduction2}
0 = QG(b,f,\varphi(b,f)) = Q\mathcal{F}(b,f+\varphi(b,f)), \quad (b,f)\in U.
 \end{align}
 Since $\text{span}\left\{ v \right\}$ is one dimensional, we have $f=tv$ for some $t\in \R$, therefore the system \eqref{Reduction2} can be written in terms of the variables $b$ and $t$  as
 \begin{align*}
 0 = Q\mathcal{F}(b,tv+\varphi(b,tv)) &= \int_{0}^{1}\frac{d}{d s}\left( Q\mathcal{F}(b,stv+\varphi(b,stv)) \right)ds\\
 &= \int_0^1 QD\mathcal{F}(b,stv+\varphi(b,stv))(tv+t\partial_f\varphi(b,stv)v) ds,
 \end{align*}
 where we used \eqref{trivial_solution} to obtain the second equality.
 Dividing the right-hand side by $t$ to get rid of the trivial solutions, we are led to solve the following two dimensional problem:
 \begin{align}\label{Reduction4}
0 = F_{red}(b,t):=\int_0^1 Q D\mathcal{F}(b,stv+\varphi(b,stv))(v+\partial_f\varphi(b,stv)v) ds, \quad (b,tv)\in U.
 \end{align}

\subsection{Quadratic expansion of the reduced functional}
The main idea is to expand the reduced functional $F_{red}$ up to quadratic terms. To this end, we recall the following proposition for the derivatives of $F_{red}$.

\begin{proposition}(\cite[Proposition 3]{Hmidi-Mateu:degenerate-bifurcation-vstates-doubly-connected-euler}, \cite[Proposition 3.1]{Hmidi-Renault:existence-small-loops-doubly-connected-euler})
Let $F_{red}$ be defined as in \eqref{Reduction4}. Then the following hold:
\begin{itemize}
\item[(a)] First derivatives:
\begin{align*}
&\partial_b F_{red}(2,0) = Q\partial_b D\mathcal{F}(2,0)v, \\
&\partial_t F_{red}(2,0) = \frac{1}{2}\frac{d^2}{dt^2}Q\mathcal{F}(2,tv)\bigg|_{t=0}.
\end{align*}
\item[(b)] Second derivatives:
\begin{align*}
&\partial_{bb}F_{red}(2,0) = 2Q\partial_bD\mathcal{F}(2,0)\tilde{v},\\
&\partial_{tt} F_{red}(2,0) = \frac{1}{3}\frac{d^3}{dt^3}\left[ Q\mathcal{F}(2,tv)\right] \bigg|_{t=0} + Q\frac{d^2}{dtds}\mathcal{F}(2,tv+s\hat{v})\bigg|_{t=s=0},\\
&\partial_{tb}F_{red}(2,0) = \frac{1}{2}\partial_b Q\frac{d^2}{dt^2} \mathcal{F}(b,tv) \bigg|_{b=2,t=0} + \frac{1}{2}Q\partial_bD\mathcal{F}(2,0)\hat{v} + Q\frac{d^2}{dtds}\mathcal{F}(2,tv+s\tilde{v})\bigg|_{t=s=0},
\end{align*}
where 
\begin{align*}
&\hat{v} := -\left[ D\mathcal{F}(2,0) \right]^{-1} \frac{d^2}{dt^2}\left[ (I - Q) \mathcal{F}(2,tv) \right] \bigg|_{t=0}, \\
&\tilde{v} := - \left[ D \mathcal{F}(2,0) \right]^{-1} (I - Q)\partial_b D \mathcal{F}(2,0)v.
\end{align*}
\end{itemize}
\end{proposition}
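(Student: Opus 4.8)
The plan is to derive all five formulas by direct differentiation of the reduced functional, exploiting a single relation between $F_{red}$ and the full functional together with a handful of structural identities for $\varphi$. Writing $\Psi(b,t):=Q\mathcal{F}(b,tv+\varphi(b,tv))$, the derivation leading to \eqref{Reduction4} shows precisely that $\Psi(b,t)=t\,F_{red}(b,t)$. Expanding $\Psi=tF_{red}$ and matching $(b,t)$-derivatives at $t=0$ converts every derivative of $F_{red}$ at $(2,0)$ into a derivative of $\Psi$: I will use $\partial_t\Psi(b,0)=F_{red}(b,0)$, $\partial_{tt}\Psi(2,0)=2\partial_tF_{red}(2,0)$, $\partial_{ttt}\Psi(2,0)=3\partial_{tt}F_{red}(2,0)$, $\partial_{tb}\Psi(2,0)=\partial_bF_{red}(2,0)$, $\partial_{tbb}\Psi(2,0)=\partial_{bb}F_{red}(2,0)$ and $\partial_{ttb}\Psi(2,0)=2\partial_{tb}F_{red}(2,0)$. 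Thus the whole problem reduces to computing low-order $(b,t)$-derivatives of $\Psi$ at $(2,0)$ by the chain rule, the regularity needed for this being supplied by Proposition~\ref{regularity1}.

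Next I would record the structural facts that make these derivatives collapse to the stated expressions. First, since $\mathrm{Im}(D\mathcal{F}(2,0))=\mathcal{Y}$ is $Q$-orthogonal to $w$ (Lemma~\ref{onedimensionality}), the operator $QD\mathcal{F}(2,0)$ vanishes identically; this annihilates every term in which a single copy of $D\mathcal{F}(2,0)$ is applied to the highest-order $\varphi$-derivative. Second, \eqref{trivial_solution} gives $\varphi(b,0)=0$, hence $\partial_b\varphi(b,0)=0$, and differentiating the defining identity $(I-Q)\mathcal{F}(b,tv+\varphi(b,tv))\equiv 0$ once in $t$ at $(2,0)$, together with $D\mathcal{F}(2,0)v=0$ and the isomorphism \eqref{isomorphism}, forces $\partial_f\varphi(2,0)v=0$. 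Differentiating the same identity twice in $t$, respectively once in $t$ and once in $b$, and inverting $D\mathcal{F}(2,0)|_{\mathcal{X}}$ identifies $\partial_{ff}\varphi(2,0)[v,v]=\hat v$ and $\partial_b\partial_f\varphi(2,0)v=\tilde v$, exactly the vectors in the statement.

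With these in hand the computation is mechanical. Setting $u_b(t):=tv+\varphi(b,tv)$, one has $u_b(0)=0$, $u_b'(0)=v+\partial_f\varphi(b,0)v$, $u_b''(0)=\partial_{ff}\varphi(b,0)[v,v]$, and the chain rule gives $\partial_t\Psi(b,0)=QD\mathcal{F}(b,0)u_b'(0)$; differentiating in $b$ and using $\partial_f\varphi(2,0)=0$ and $QD\mathcal{F}(2,0)=0$ yields $\partial_bF_{red}(2,0)=Q\partial_bD\mathcal{F}(2,0)v$, while $\partial_tF_{red}(2,0)=\tfrac12 QD^2\mathcal{F}(2,0)[v,v]=\tfrac12\frac{d^2}{dt^2}Q\mathcal{F}(2,tv)|_{t=0}$ (the $\varphi$-correction drops out by $QD\mathcal{F}(2,0)=0$), proving (a). For (b), the second $b$-derivative of $\partial_t\Psi(b,0)$ produces $Q\partial_{bb}D\mathcal{F}(2,0)v+2Q\partial_bD\mathcal{F}(2,0)\,\partial_b\partial_f\varphi(2,0)v$; the first term vanishes because Lemma~\ref{linearized} gives $D\mathcal{F}(b,0)v=((b-2)\sin(2\theta),0)$, which is affine in $b$, and the second equals $2Q\partial_bD\mathcal{F}(2,0)\tilde v$. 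The third-order expansion $\partial_{ttt}\Psi(2,0)=QD^3\mathcal{F}(2,0)[v,v,v]+3QD^2\mathcal{F}(2,0)[v,\hat v]$ (again $QD\mathcal{F}(2,0)u_2'''(0)=0$) gives $\partial_{tt}F_{red}(2,0)$, and differentiating $\partial_{tt}\Psi(b,0)=QD^2\mathcal{F}(b,0)[u_b'(0),u_b'(0)]+QD\mathcal{F}(b,0)u_b''(0)$ once in $b$ at $b=2$ gives the three-term formula for $\partial_{tb}F_{red}(2,0)$, after rewriting $QD^2\mathcal{F}(2,0)[v,\hat v]$ and $QD^2\mathcal{F}(2,0)[v,\tilde v]$ as the mixed derivatives $Q\frac{d^2}{dtds}\mathcal{F}(2,tv+s\hat v)|_{t=s=0}$ and $Q\frac{d^2}{dtds}\mathcal{F}(2,tv+s\tilde v)|_{t=s=0}$.

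The main obstacle is bookkeeping rather than conceptual: correctly organizing the Fa\`a di Bruno expansions of $\frac{d^2}{dt^2}$ and $\frac{d^3}{dt^3}$ of $Q\mathcal{F}(b,u_b(t))$, and tracking which of the many product-rule terms survive after the projection $Q$ and the vanishing $QD\mathcal{F}(2,0)=0$. The one genuinely problem-specific input is the affine-in-$b$ structure of $D\mathcal{F}(b,0)v$ from Lemma~\ref{linearized}, which kills the spurious $Q\partial_{bb}D\mathcal{F}(2,0)v$ term in $\partial_{bb}F_{red}(2,0)$; everything else follows from the abstract Lyapunov--Schmidt identities and the smoothness of $\mathcal{F}$.
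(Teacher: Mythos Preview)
The paper does not prove this proposition; it merely cites \cite{Hmidi-Mateu:degenerate-bifurcation-vstates-doubly-connected-euler,Hmidi-Renault:existence-small-loops-doubly-connected-euler} and uses the stated formulas as a black box. Your derivation is correct and is exactly the standard argument behind those references: rewrite $Q\mathcal{F}(b,tv+\varphi(b,tv))=tF_{red}(b,t)$, differentiate, and use $QD\mathcal{F}(2,0)=0$, $\varphi(b,0)=0$, $\partial_f\varphi(2,0)v=0$ together with the implicit identifications $\partial_{ff}\varphi(2,0)[v,v]=\hat v$ and $\partial_b\partial_f\varphi(2,0)v=\tilde v$ coming from differentiating $(I-Q)\mathcal{F}(b,tv+\varphi(b,tv))\equiv 0$.

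One remark worth making explicit: your observation that $Q\partial_{bb}D\mathcal{F}(2,0)v=0$ is not an abstract Lyapunov--Schmidt identity but a feature of this particular problem, and you are right to flag it. In the general chain-rule computation the formula for $\partial_{bb}F_{red}(2,0)$ carries the extra term $Q\partial_{bb}D\mathcal{F}(2,0)v$; here it vanishes because, by Lemma~\ref{linearized}, $D\mathcal{F}(b,0)v=((b-2)\sin(2\theta),0)$ is affine in $b$ (the $b^2(n-1)$ entry of $M_n$ is zero for $n=1$). The cited references are likewise written for concrete functionals where the analogous term drops out, so the proposition as stated in the paper already incorporates this simplification. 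Your proof sketch handles this point correctly.
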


Now using the values found in Lemma~\ref{somevalues}, we can obtain the derivatives of $F_{red}$.

\begin{proposition}\label{explicitvalues}
Let $F_{red}$ be defined as in \eqref{Reduction4}. Then it holds that
\begin{align}
&\partial_b F_{red}(2,0) = 0, \label{derivative11}\\
&\partial_t F_{red}(2,0) = 0. \label{derivative12}\\
&\partial_{bb}F_{red}(2,0) = 2w,\label{derivative21}\\
&\partial_{tt} F_{red}(2,0) = -8w, \label{derivative22}\\
&\partial_{tb}F_{red}(2,0) = 0. \label{derivative23}
\end{align}
\end{proposition}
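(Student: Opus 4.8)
\textbf{Proof proposal for Proposition~\ref{explicitvalues}.}

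The plan is to feed the formulas from part (a) and part (b) of the preceding proposition into the explicit computations supplied by Lemma~\ref{linearized} (the diagonalized form of $D\mathcal{F}(b,0,0)$ in Fourier modes) together with the auxiliary values collected in Lemma~\ref{somevalues}, and then simply track which Fourier mode each term lives in. Since $v = (0,\cos(2\theta))$ excites only the $n=1$ mode, and $Q$ projects onto $w = (0,\cos(2\theta))$ in $Y$, every quantity I need reduces to reading off a single scalar coefficient. The first-order identities \eqref{derivative11}--\eqref{derivative12} should fall out almost immediately: for $\partial_b F_{red}(2,0) = Q\partial_b D\mathcal{F}(2,0)v$, differentiating $M_n$ in $b$ at $b=2$, $\Omega=1$ gives a matrix whose action on $(a_1,b_1)=(0,1)$ lands in the $a_1$-slot (the $\sin$ component) after applying $Q$, which kills it; and $\partial_t F_{red}(2,0) = \tfrac12 \tfrac{d^2}{dt^2} Q\mathcal{F}(2,tv)|_{t=0}$ vanishes because the second variation of $\mathcal{F}$ at the radial solution, projected by $Q$, is zero — this is exactly the manifestation of the degeneracy of the bifurcation, and it is precisely the quantity computed in Lemma~\ref{somevalues}.

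For the second derivatives I would proceed term by term. First compute the correctors $\hat v$ and $\tilde v$: each is of the form $-[D\mathcal{F}(2,0)]^{-1}$ applied to an explicit element of $\mathcal{Y} = \mathrm{Im}(D\mathcal{F}(2,0))$, and since $D\mathcal{F}(2,0)$ is diagonal with blocks $M_n$ that are invertible for $n\ge 2$, inverting it is just inverting a sequence of $2\times 2$ matrices whose entries Lemma~\ref{somevalues} should record. Then $\partial_{bb}F_{red}(2,0) = 2Q\partial_b D\mathcal{F}(2,0)\tilde v$ is a single matrix-vector product projected onto the $n=1$ mode; the claim is that it equals $2w$. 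Similarly $\partial_{tb}F_{red}(2,0)$ is a sum of three pieces, each an explicit derivative of $\mathcal{F}$ at the radial profile projected by $Q$, and the claim is that they cancel to $0$. The most involved one is $\partial_{tt}F_{red}(2,0) = \tfrac13 \tfrac{d^3}{dt^3}[Q\mathcal{F}(2,tv)]|_{t=0} + Q\tfrac{d^2}{dt\,ds}\mathcal{F}(2,tv+s\hat v)|_{t=s=0}$: here I need both the third-order self-interaction of the kernel mode and the mixed second-order interaction of $v$ with its first corrector $\hat v$, and the two should combine to $-8w$.

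The genuine work — and the step I expect to be the main obstacle — is the extraction of all the Taylor coefficients of $\mathcal{F}$ up to third order at $(b,g,r)=(2,0,0)$ in the direction $v$ (and the mixed directions $v,\hat v$ and $v,\tilde v$), i.e.\ the content that Lemma~\ref{somevalues} packages. This requires expanding the Birkhoff--Rott singular integrals in $\mathcal{F}_1$ and $\tilde{\mathcal{F}}_2$ to cubic order in $r = t\cos(2\theta)$, which means carefully handling the principal-value integrals of the type $\fint \frac{\cos(2n\eta)\,(\cos(\theta-\eta)\ \text{or}\ \sin(\theta-\eta))}{2-2\cos(\theta-\eta)}\,d\eta$ and their higher-order analogues arising from differentiating the denominator $|z(\theta)-z(\eta)|^2$. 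These are the computations deferred to Appendix~\ref{appendix}; once the scalar values are in hand, the proposition is pure bookkeeping, and I would present it as such: state each of the five identities, cite the relevant formula from the preceding proposition and from Lemma~\ref{somevalues}, and verify the arithmetic of the $2\times 2$ linear algebra in the $n=1$ (and, for the correctors, $n\ge 2$) Fourier blocks.
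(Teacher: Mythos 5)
Your proposal is correct and follows essentially the same route as the paper: the proof of Proposition~\ref{explicitvalues} is exactly the bookkeeping you describe, substituting the explicit values \eqref{value1}, \eqref{value2}, \eqref{value5}, \eqref{value6}, \eqref{value8}--\eqref{value11} from Lemma~\ref{somevalues} into the formulas of the preceding proposition and using the definition \eqref{defofpq1} of $Q$ together with orthogonality of Fourier modes. You also correctly locate the genuine work in the cubic expansion of the singular integrals and the computation of the correctors $\hat v$, $\tilde v$, which the paper likewise defers to Appendix~\ref{appendix}.
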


\begin{proof}
  \eqref{derivative11} follows immediately from \eqref{defofpq1} and \eqref{value1}. For \eqref{derivative12}, we use \eqref{value2} and the orthogonality of the Fourier modes. \eqref{derivative21} follows from \eqref{value11}. \eqref{derivative22} follows from \eqref{value5} and \eqref{value6}. Lastly, \eqref{derivative23} follows from \eqref{value10}, \eqref{value8} and \eqref{value9}.
\end{proof}
\subsection{Proof of Theorem~\ref{rotatingsolution}}
Now we are ready to  prove the main theorem of this section.

\begin{proof}
 From \eqref{Reduction4}, it suffices to show that there exist $(b,t)$ such that $t\ne 0$  and $F_{red}(b,t) = 0$. To do so, we expand $F_{red}$ up to quadratic terms and  obtain that for all $(b,t)$ near $(2,0)$, 
\begin{align*}
F_{red}(b,t) &= \left[F_{red}(2,0) + \partial_{b}F_{red}(2,0)(b-2) + \partial_{t}F_{red}(2,0)t \right. \\
&\left.  \ + \frac{1}{2}\partial_{bb}F_{red}(2,0)(b-2)^2 + \frac{1}{2}\partial_{tt}F_{red}(2,0)t^2 + \partial_{tb}F_{red}(2,0)(b-2)t + \left((b-2)^2 + t^2 \right)\epsilon(b,t) \right]w,
\end{align*}
where $\epsilon(b,t)$ is a continuous function such that $\lim_{(b,t)\to (2,0)}\epsilon(b,t) =\epsilon(2,0)= 0$. From Proposition~\ref{explicitvalues}, it follows that (we drop $w$ for simplicity)
\begin{align*}
F_{red}(b,t) = (b-2)^2 - 4t^2 + \left( (b-2)^2 + t^2 \right)\epsilon(b,t).
\end{align*}
Now we use the change of variables $b:= x + 2$ and $t=xy$, so that 
\begin{align}\label{changeofvariables}
\hat{F}(x,y) := \frac{F_{red}(x+2,xy)}{x^2} = \left(1 - 4y^2\right) + (1+y^2)\epsilon(x+2,xy).
\end{align} 
Clearly, $\hat{F}\left( 0,\frac{1}{2} \right) = 0$ and $ \partial_{y}\hat{F}\left( 0,\frac{1}{2} \right) = -4\ne 0$.
Therefore the implicit function theorem implies that there exists a continuous function $\phi$ near $0$ such that $\hat{F}(x,\phi(x)) = 0$ and $\phi(0) = \frac{1}{2}$. Therefore it follows from \eqref{changeofvariables} that there exists a pair $(b,t)$ such that $t\ne 0$ and $F_{red}(b,t) = 0$. This finishes the proof.
\end{proof}

\begin{figure}[h!]
\begin{center}
\includegraphics[scale=0.2]{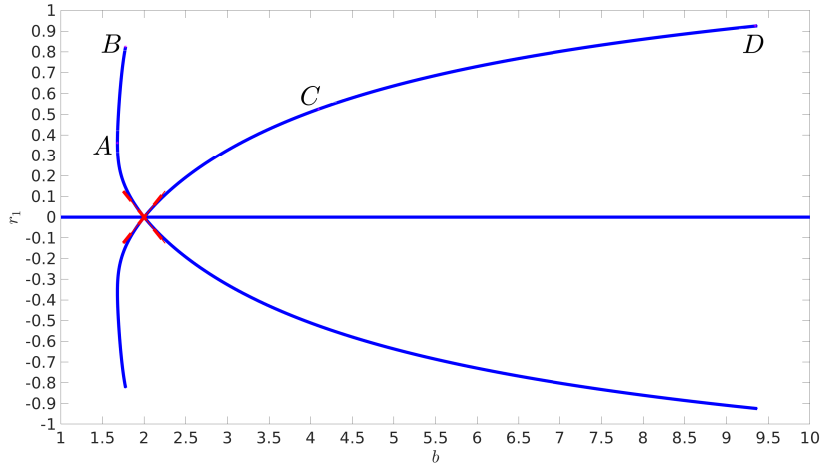}
\caption{\label{fig_bifurcation} Plot of the bifurcation diagram of the solutions given by Theorem \ref{rotatingsolution}. The dotted red lines correspond to the linear expansion \eqref{changeofvariables} around the bifurcation point $(2,0)$. See Figure \ref{fig_panels} for a numerical plot of the solutions at the points $A$, $B$, $C$, $D$. The branches continue beyond what is calculated.}
\end{center}

\end{figure}

\section{Numerical results}\label{sec_numerics}

In this section, we describe how to compute numerically the branches of solutions emanating from the disk, previously proved (locally) in Theorem \ref{rotatingsolution}. See Figure \ref{fig_bifurcation}. To do so, we calculate solutions of the form
\[R(\theta) = 1 + \sum_{k=1}^{N} r_k \cos(2k\theta), \quad \gamma(\theta) = \sum_{k=0}^{N} \gamma_k \cos(2k\theta)
\]
with $\gamma_0 = b$. We first employed continuation in $b$, in increments of $\Delta b = 0.001$, starting from $b=1.8$ and $b=2.1$ and using as initial guess for the starting $b$ the solution given by the linear theory and for the subsequent $b$ the solution found in the previous iteration. After discovering a fold at approximately $b \sim 1.68$, we switched variables and instead we recalculated using continuation in $r_1$, which appears to be monotonic along the branches. As before, we start at $r_1 = \pm 0.125$ and take an increment $\Delta r_1 = 0.001$.

To compute a solution for a fixed $r_1$ we use the Levenberg-Marquardt algorithm. We aim to find a zero of the system of equations $\mathcal{F}(b,g,r)(\theta_j)$, with $\theta_j = \frac{j\pi}{N_\theta}, \quad j=1,\ldots,N_\theta$ and $N_\theta = 1024$ with variables $r_k, \, k \neq 1$  (recall that $r_1$ is fixed at each iteration since it is the continuation parameter) and $\gamma_k$. We take $N = 160$. In order to perform the integration in space, we desingularize the principal value at $\eta = \theta$ by subtracting $\frac12\mathcal{H}(\gamma)$ to $\mathcal{F}_1$, where $\mathcal{H}$ denotes the Hilbert transform, computed explicitly since we have the Fourier expansion of $\gamma$, and perform a trapezoidal integration on the rest (for which the integrand is smooth), with step $h = \frac{2\pi}{N_\theta}$. We remark that the integrand of $\mathcal{F}_2$ has a removable singularity (thus no principal value integration is needed) and can be integrated using the trapezoidal integration if the limit at $\eta = \theta$ is taken properly.

\begin{figure}[h!]
\hspace*{-1.2cm}\begin{tabular}{cc}
 \includegraphics[width=0.25\textwidth]{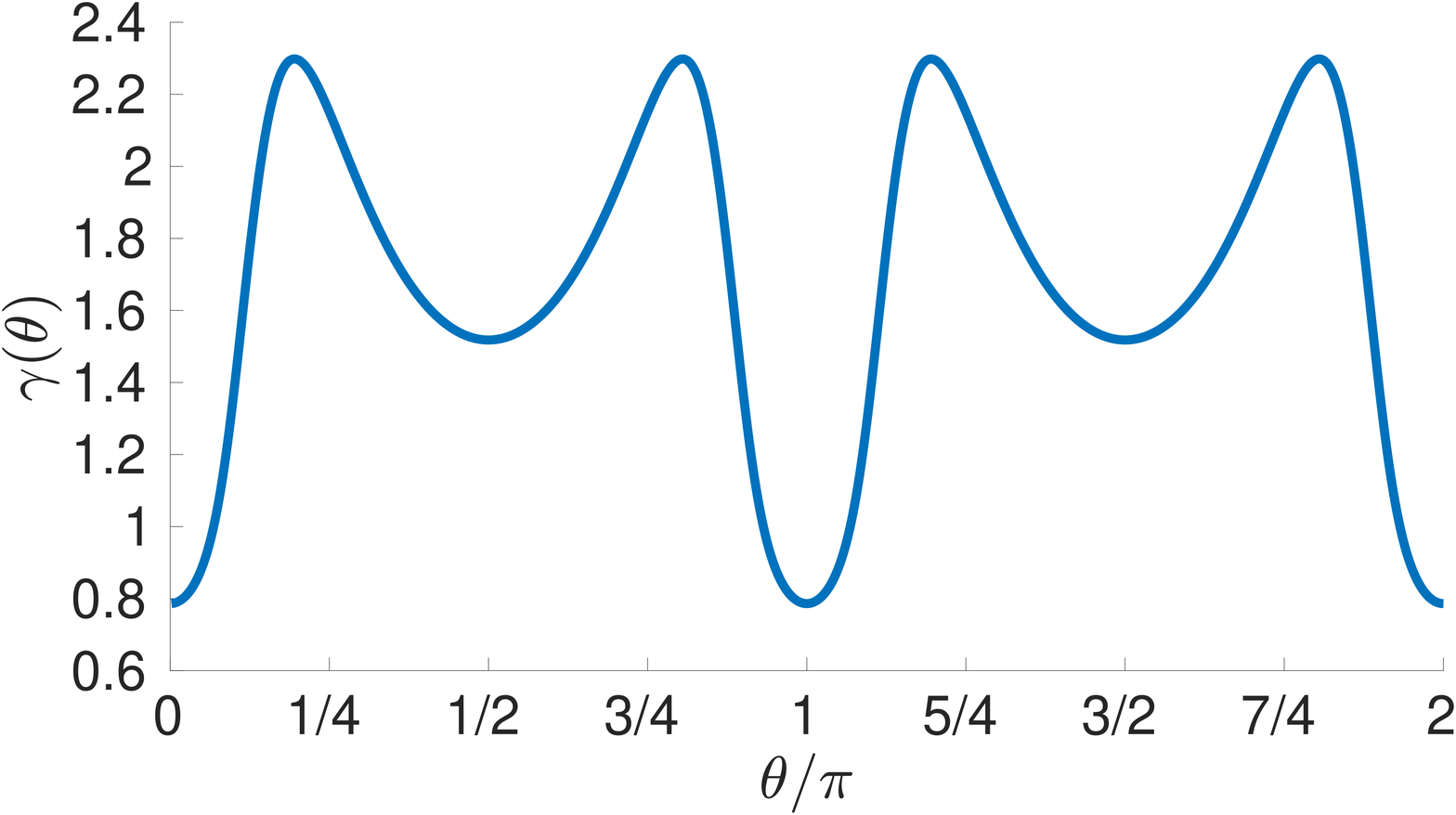}
 \includegraphics[width=0.25\textwidth]{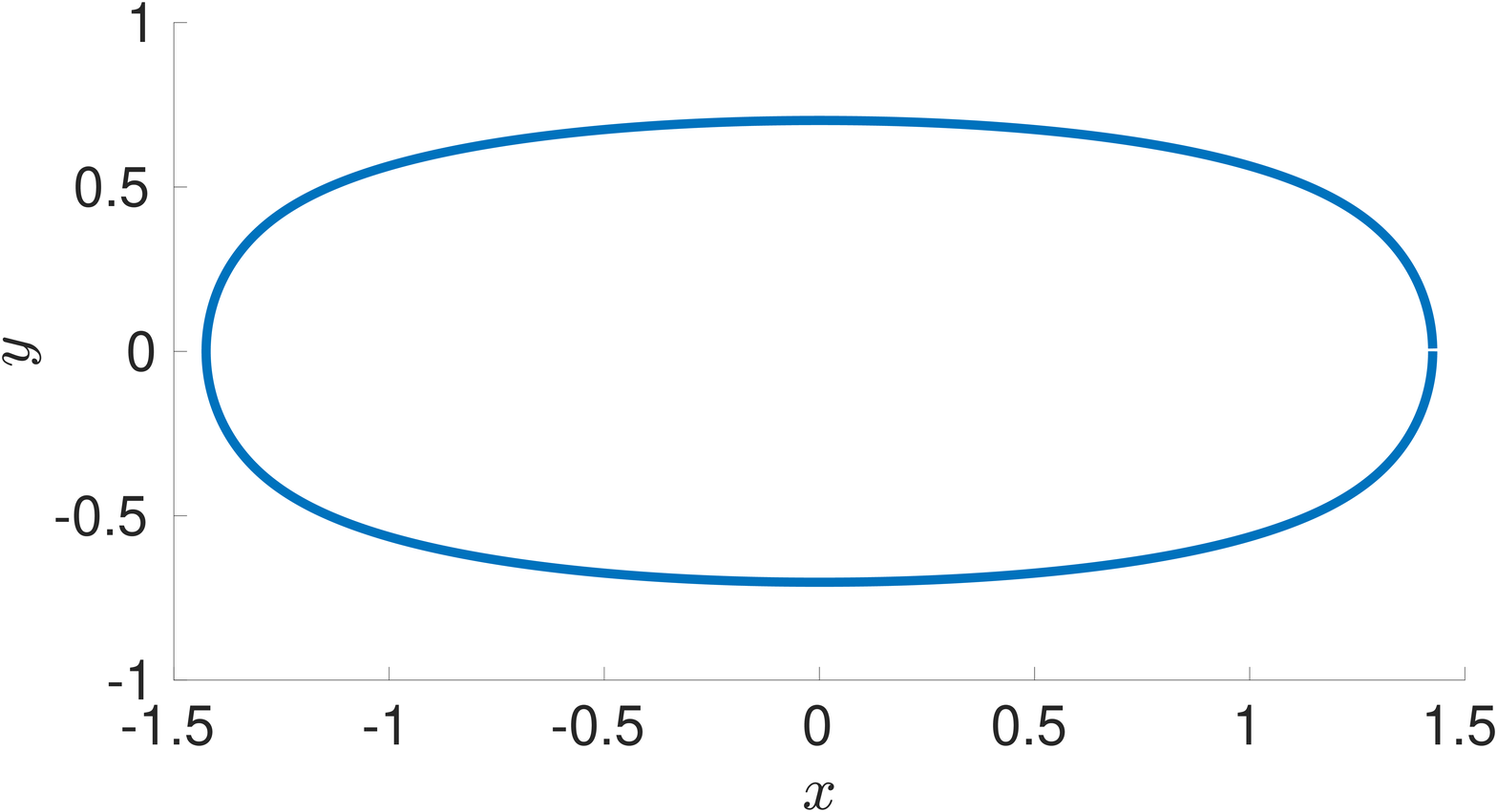}&
 \includegraphics[width=0.25\textwidth]{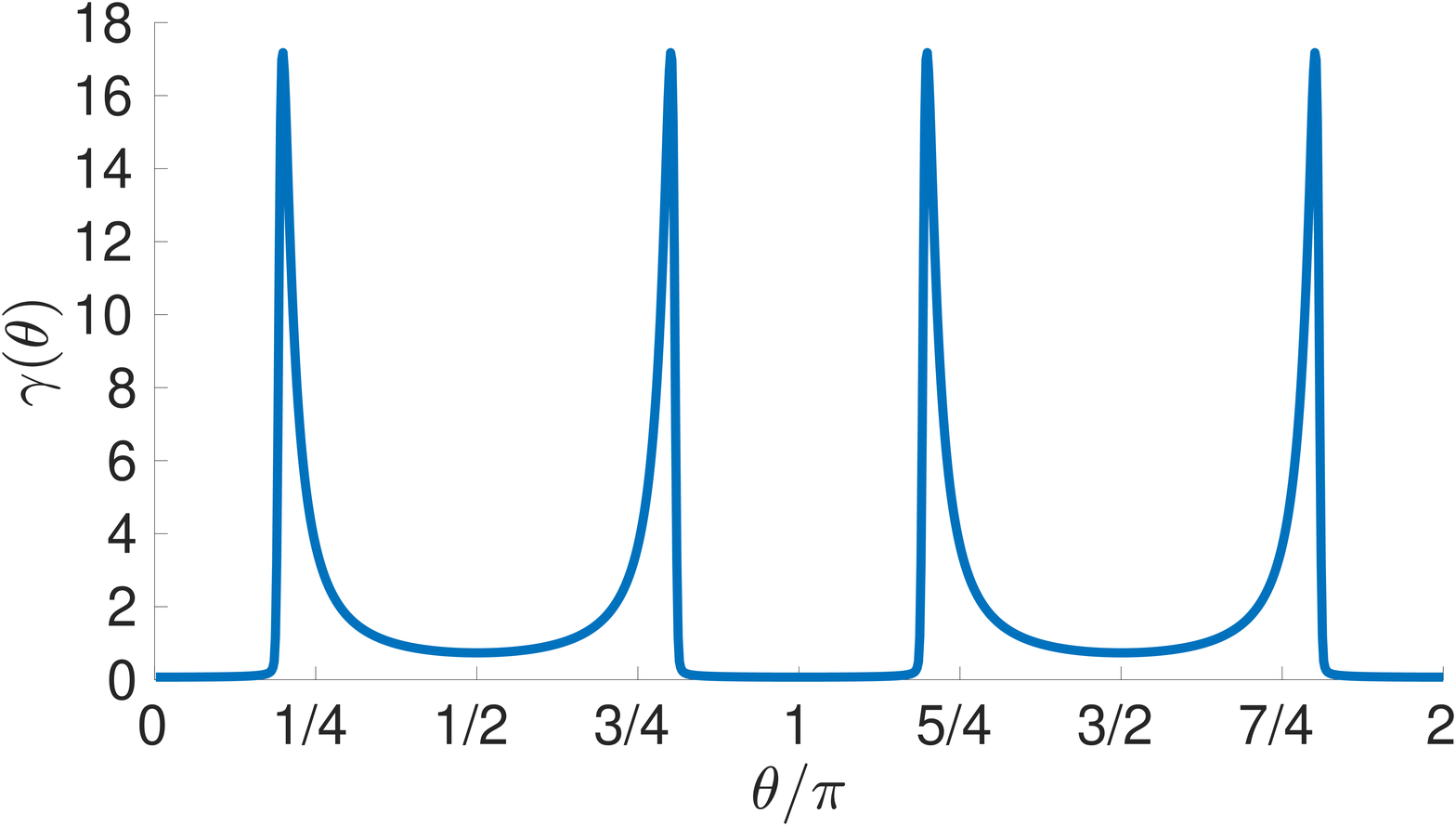}
 \includegraphics[width=0.25\textwidth]{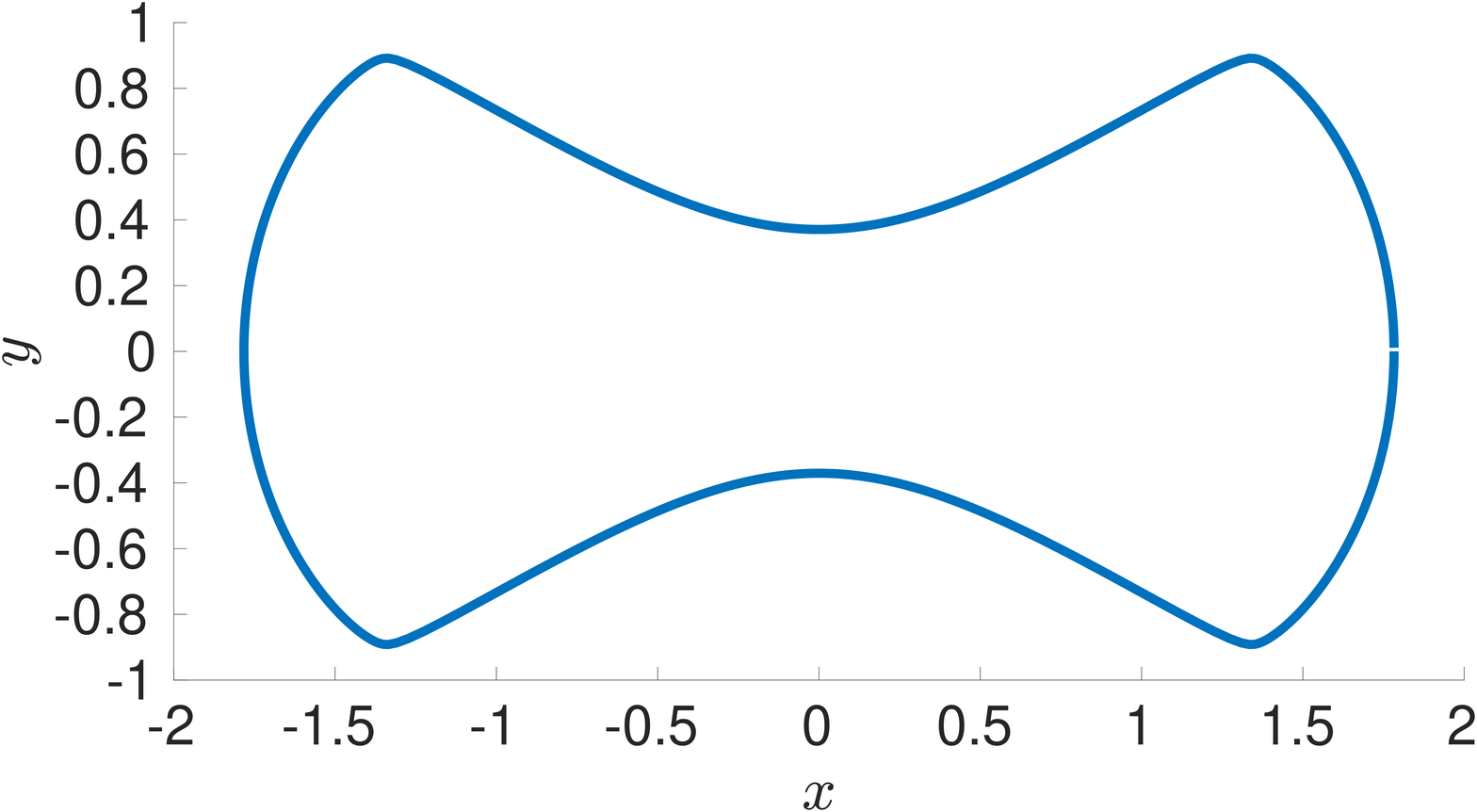}  \\
\qquad \qquad (a) $\gamma(\theta)$ and $z(\theta)$ for $r_1 = 0.362, b \sim 1.6799$ & \qquad (b) $\gamma(\theta)$ and $z(\theta)$ for $r_1 = 0.825, b \sim 1.7779$ \hspace{2.5cm} \\
 \includegraphics[width=0.25\textwidth]{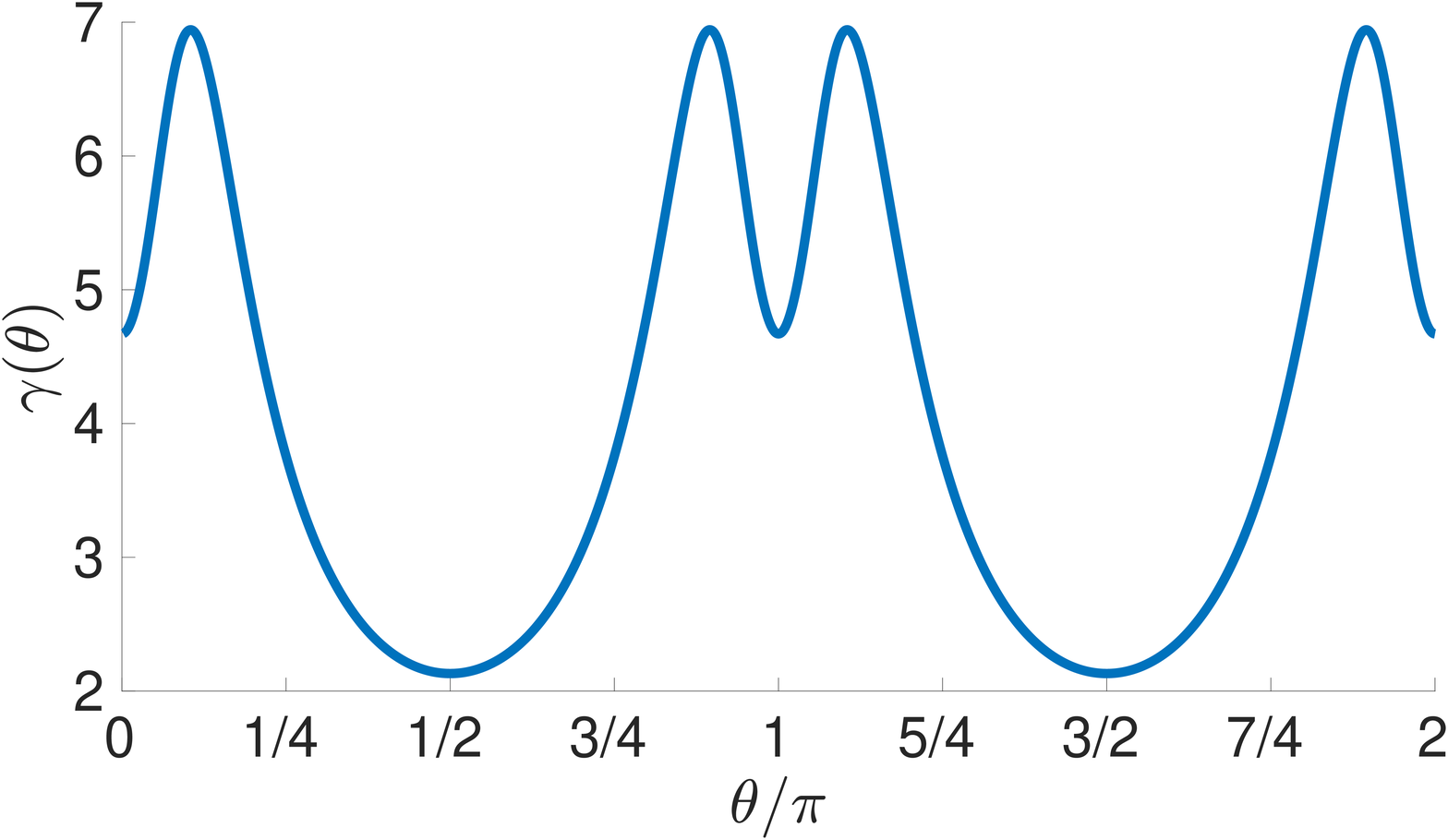}
 \includegraphics[width=0.25\textwidth]{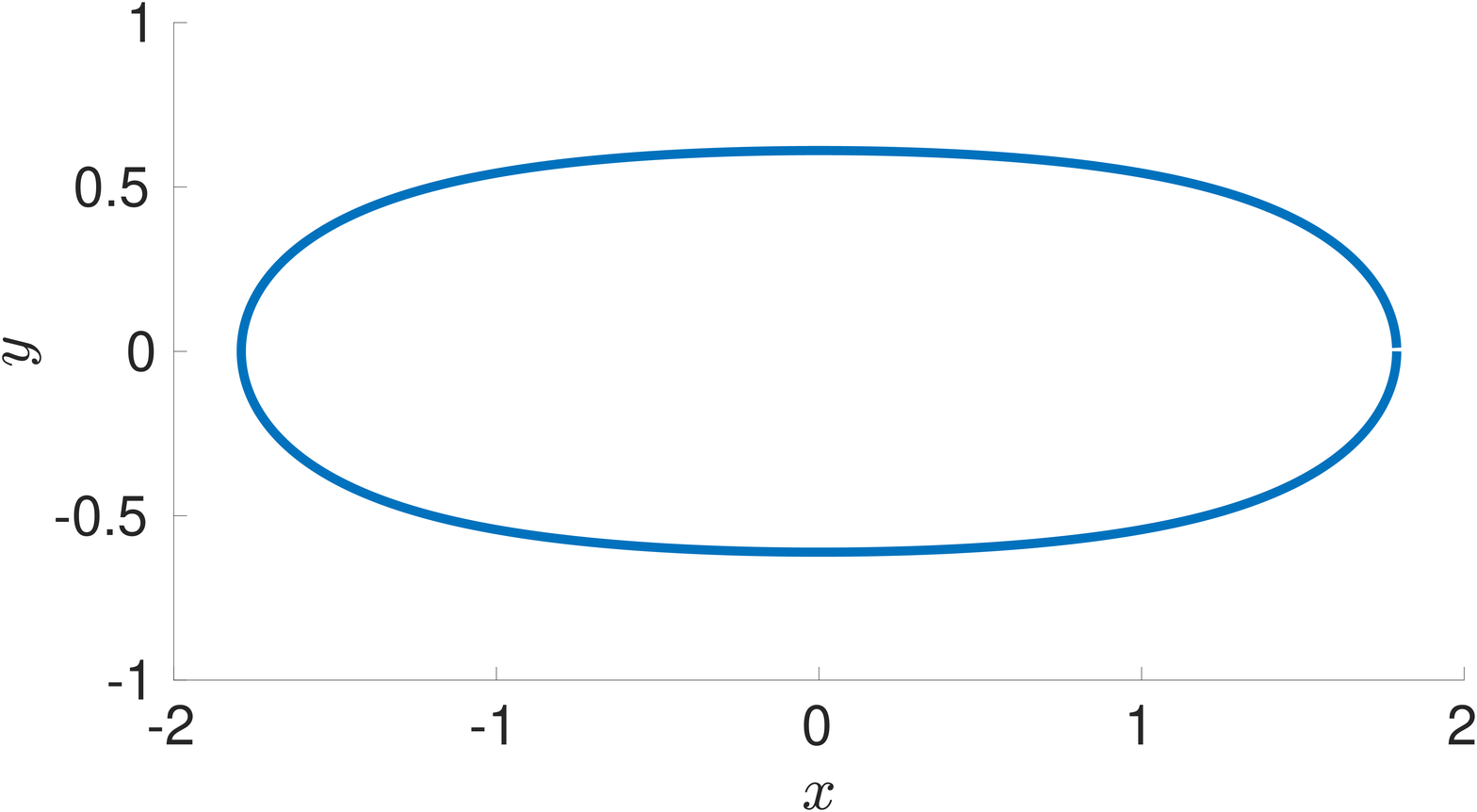}&  
 \includegraphics[width=0.25\textwidth]{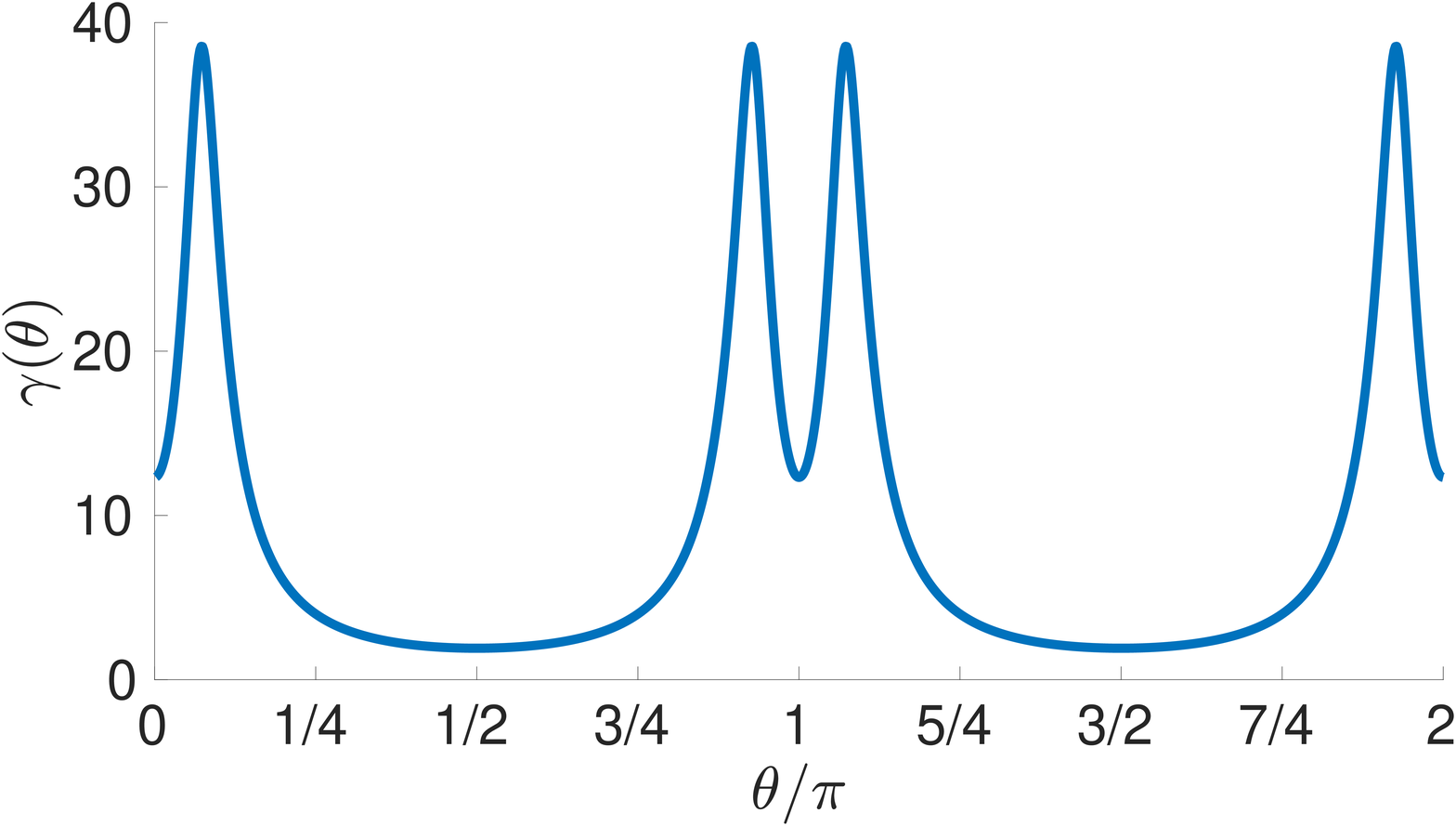}
 \includegraphics[width=0.25\textwidth]{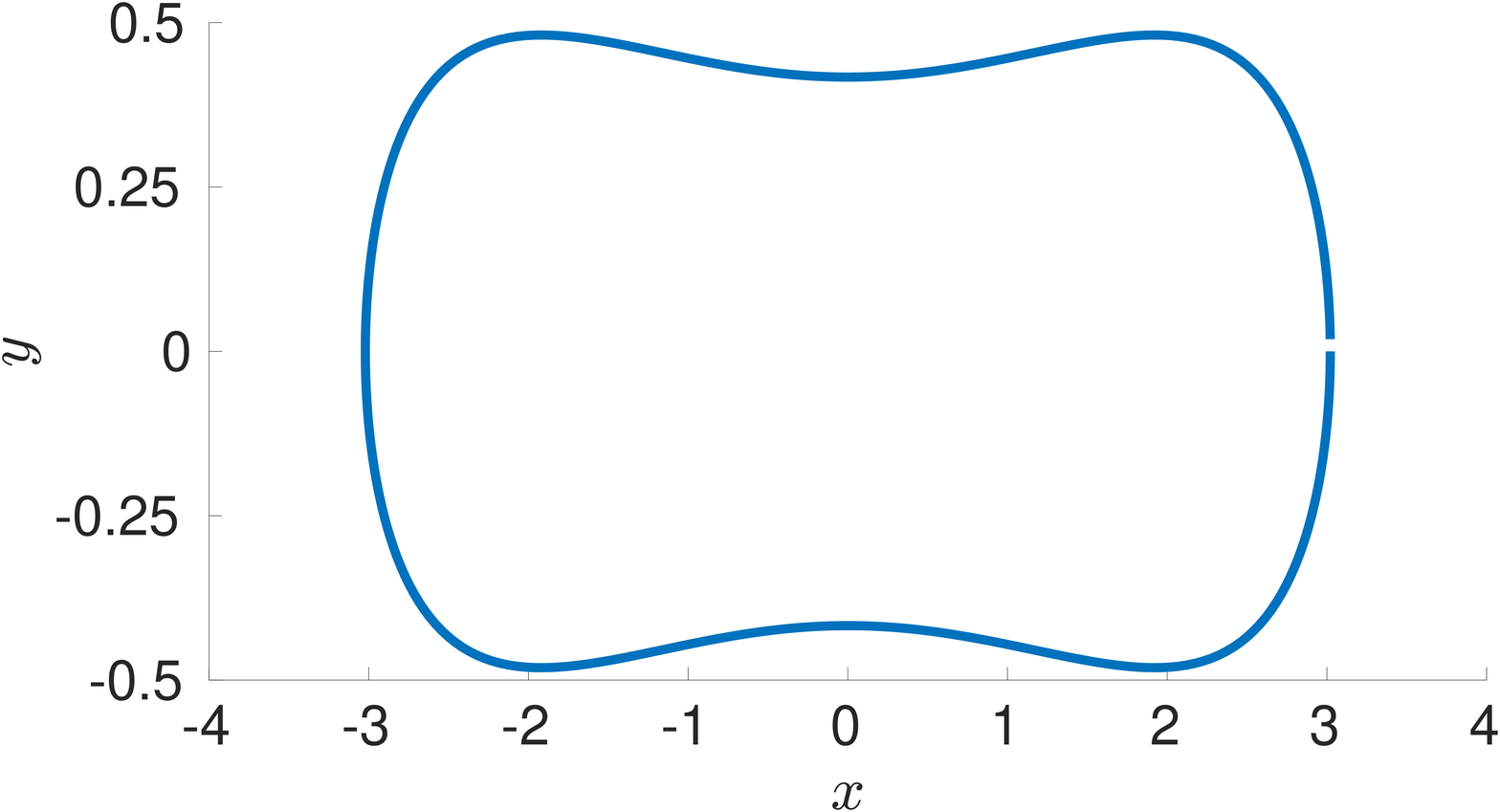}\\
 \qquad (c) $\gamma(\theta)$ and $z(\theta)$ for $r_1 = 0.525, b \sim 4.0954$ & \qquad (d) $\gamma(\theta)$ and $z(\theta)$ for $r_1 = 0.925, b \sim 9.3439$ \hspace{2.5cm}
\end{tabular}
 \caption{Panels (a)-(d): $\gamma(\theta)$ and $z(\theta)$ at the points $A$--$D$ highlighted in Figure \ref{fig_bifurcation}. In panel (b), $\gamma$ appears to tend to be concentrated only on the horizontal parts of $z$, leading to a possible solution consisting only of two symmetric curves (cf. \cite[Figure 1]{ONeil:relative-equilibria-vortex-sheets}) and a change of topology.
} \label{fig_panels}
\end{figure}

\appendix
\section{Derivatives of the Functional}\label{appendix}

\subsection{Functional derivatives}
Recall that $\mathcal{F}(b,g,r) = (\mathcal{F}_1,\mathcal{F}_2) $ is given in \eqref{mainfunctional}. For simplicity, we denote
 \begin{align*}
& A_1 := (b+g(\eta)),\\
& A_2 := \left(r'(\theta)\cos(\theta-\eta)-(1+r(\theta))\sin(\theta-\eta)\right)(1+r(\eta))-(1+r(\theta))r'(\theta),\\
& A_3 := \frac{1}{{(1+r(\theta))^2+(1+r(\eta))^2-2(1+r(\theta))(1+r(\eta))\cos(\theta-\eta)}},\\
& A_4:= r'(\theta)(1+r(\theta)),\\
& A_5 :=(b+g(\eta))(b+g(\theta)),\\
& A_6:= (1+r(\theta))^2-(r'(\theta)\sin(\theta-\eta)+(1+r(\theta))\cos(\theta-\eta))(1+r(\eta)),\\
& A_7 := \frac{1}{r'(\theta)^2+(1+r(\theta))^2},\\
& A_8 := \frac{(1+r(\theta))^2(b+g(\theta))}{r'(\theta)^2+(1+r(\theta))^2}.
 \end{align*}
 We also denote the average integral by $\fint f(\theta) d\theta := \frac{1}{2\pi} \int_{-\pi}^{\pi} f(\theta)d\theta$. Therefore the functional $\mathcal{F}$ can be written as 
  \begin{align}
 &\mathcal{F}_1 =  \fint A_1A_2A_3  d\eta + \Omega A_4 , \label{functionalf1} \\
 &\tilde{\mathcal{F}}_2 =\fint  A_5 A_6 A_7A_3  d\eta - \Omega A_8. \label{functionalf2}
 \end{align}
 
 We will expand $A_i(g,\theta,\eta)$ and $A_i(r,\theta,\eta)$ up to quadratic/cubic order in $g$ and $r$.
  
 \begin{lemma}\label{expansion}
 Let $A_i's$ be as above. We have
 \begin{align*}
 &A_1 = b+g(\eta), \\
 &A_2 = -\sin(\theta-\eta) + \left[ r'(\theta)(\cos(\theta-\eta)-1) - (r(\theta) + r(\eta))\sin(\theta-\eta) \right] \\
 & \quad + \left[ r'(\theta)r(\eta)(\cos(\theta-\eta)-1) - r'(\theta)(r(\theta)-r(\eta)) -r(\theta)r(\eta)\sin(\theta-\eta)\right], \\
 & A_3 = \frac{1}{2-2\cos(\theta-\eta)} - \frac{r(\theta)+r(\eta)}{2-2\cos(\theta-\eta)} \\
 & \quad + \frac{1}{2-2\cos(\theta-\eta)} \left[ r(\theta)^2+r(\theta)r(\eta) + r(\eta)^2 - \frac{(r(\theta)-r(\eta))^2}{2-2\cos(\theta-\eta)} \right] \\
  & \quad + \frac{1}{2-2\cos(\theta-\eta)} \left[ \frac{(r(\theta)+r(\eta))(r(\theta)-r(\eta))^2}{1-\cos(\theta-\eta)} - \left[r(\theta)^3 + r(\theta)^2r(\eta) + r(\theta)r(\eta)^2 + r(\eta)^3 \right]\right] + O(r^4),\\
 & A_4 = r'(\theta) + r(\theta)r'(\theta), \\
 & A_5 = b^2 + b(g(\theta)+g(\eta)) + g(\theta)g(\eta), \\
 & A_6 = (1-\cos(\theta-\eta)) + \left[ (r(\theta)+r(\eta))(1-\cos(\theta-\eta)) + (r(\theta)-r(\eta)) -r'(\theta)\sin(\theta-\eta)\right] \\
  & \quad + \left[ r(\theta)(r(\theta)-r(\eta)) + r(\theta)r(\eta)(1-\cos(\theta-\eta)) -r'(\theta)r(\eta)\sin(\theta-\eta) \right], \\
  &A_7 = 1 - 2r(\theta) + \left[ 3r(\theta)^2 - r'(\theta)^2 \right] + \left[ 4r(\theta)r'(\theta)^2 - 4r(\theta)^3 \right] + O(r^4), \\
  & A_8 = b+g(\theta) - br'(\theta)^2 + \left[ 2br(\theta)r'(\theta)^2 - g(\theta)r'(\theta)^2 \right]+ O(r^4+g^4).
 \end{align*}
 \end{lemma}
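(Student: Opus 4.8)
The plan is entirely computational: for fixed $\theta \neq \eta$, each $A_i$ is a smooth function (analytic near the origin in the cases $A_3$, $A_7$, $A_8$) of the finitely many real numbers $r(\theta)$, $r(\eta)$, $r'(\theta)$, $g(\theta)$, $g(\eta)$, so the task is just to Taylor-expand in these variables and collect terms by total degree in $(g,r)$, bearing in mind that $r$ and $r'$ both count as first order and that the error terms $O(r^4)$, $O(r^4+g^4)$ are understood in this pointwise sense (their coefficients may carry negative powers of $1-\cos(\theta-\eta)$). The smoothness that legitimizes these expansions is of the same nature as that already used in Proposition~\ref{regularity1}.

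First I would dispatch $A_1, A_2, A_4, A_5, A_6$: these are polynomials in $r(\theta), r(\eta), r'(\theta), g(\theta), g(\eta)$, so one simply multiplies out and sorts by degree, obtaining exact (remainder-free) expansions. For $A_2$ one expands $\bigl(r'(\theta)\cos(\theta-\eta)-(1+r(\theta))\sin(\theta-\eta)\bigr)(1+r(\eta)) - (1+r(\theta))r'(\theta)$ and groups the degree $0$, $1$, $2$ parts in $r$; for $A_6$ one does the same with $(1+r(\theta))^2-(r'(\theta)\sin(\theta-\eta)+(1+r(\theta))\cos(\theta-\eta))(1+r(\eta))$, using $1-\cos$ to rewrite the linear part as $(r(\theta)+r(\eta))(1-\cos(\theta-\eta))+(r(\theta)-r(\eta))-r'(\theta)\sin(\theta-\eta)$; and $A_1, A_4, A_5$ are immediate.

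The two genuinely reciprocal quantities are $A_3$ and $A_7$, which I would handle via a geometric series. For $A_3$, rewrite the denominator using the identity $r(\theta)^2+r(\eta)^2-2r(\theta)r(\eta)\cos(\theta-\eta)=(r(\theta)-r(\eta))^2+2r(\theta)r(\eta)(1-\cos(\theta-\eta))$ as
\[
(1+r(\theta))^2+(1+r(\eta))^2-2(1+r(\theta))(1+r(\eta))\cos(\theta-\eta)=\bigl(2-2\cos(\theta-\eta)\bigr)(1+u),
\]
with
\[
u=u_1+u_2,\qquad u_1:=r(\theta)+r(\eta),\qquad u_2:=r(\theta)r(\eta)+\frac{(r(\theta)-r(\eta))^2}{2-2\cos(\theta-\eta)}.
\]
Since $u$ is small near $(g,r)=(0,0)$, expand $A_3=\frac{1}{2-2\cos(\theta-\eta)}\bigl(1-u+u^2-u^3+O(u^4)\bigr)$ and collect by total degree: the linear part is $-u_1$, the quadratic part is $u_1^2-u_2=r(\theta)^2+r(\theta)r(\eta)+r(\eta)^2-\frac{(r(\theta)-r(\eta))^2}{2-2\cos(\theta-\eta)}$, and the cubic part is $2u_1u_2-u_1^3$, where $2u_1\cdot r(\theta)r(\eta)-u_1^3=-\bigl(r(\theta)^3+r(\theta)^2r(\eta)+r(\theta)r(\eta)^2+r(\eta)^3\bigr)$ and $2u_1\cdot\frac{(r(\theta)-r(\eta))^2}{2-2\cos(\theta-\eta)}=\frac{(r(\theta)+r(\eta))(r(\theta)-r(\eta))^2}{1-\cos(\theta-\eta)}$; this is exactly the displayed formula for $A_3$. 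Similarly, for $A_7$ write $r'(\theta)^2+(1+r(\theta))^2=1+w$ with $w=2r(\theta)+r(\theta)^2+r'(\theta)^2$ (linear part $2r(\theta)$, quadratic part $r(\theta)^2+r'(\theta)^2$), expand $1-w+w^2-w^3+O(w^4)$, and collect to get $1-2r(\theta)+(3r(\theta)^2-r'(\theta)^2)+(4r(\theta)r'(\theta)^2-4r(\theta)^3)+O(r^4)$.

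Finally, for $A_8=(b+g(\theta))(1+r(\theta))^2A_7$ I would multiply $(1+r(\theta))^2=1+2r(\theta)+r(\theta)^2$ into the $A_7$-expansion; the degree-one and degree-three terms in $r$ cancel, leaving $(1+r(\theta))^2A_7=1-r'(\theta)^2+2r(\theta)r'(\theta)^2+O(r^4)$, and then multiplying by $b+g(\theta)$ and discarding all terms of total degree $\ge 4$ in $(g,r)$ yields $A_8=b+g(\theta)-br'(\theta)^2+\bigl(2br(\theta)r'(\theta)^2-g(\theta)r'(\theta)^2\bigr)+O(r^4+g^4)$. The only thing to be careful about is bookkeeping — keeping total degree in $(g,r)$ straight, not conflating $r$ with $r'$, and correctly isolating the one cubic term of $A_3$ that retains a factor $\tfrac{1}{1-\cos(\theta-\eta)}$ — but there is no conceptual obstacle.
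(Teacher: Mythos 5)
Your computation is correct and is exactly the "straightforward" direct expansion that the paper leaves to the reader (its entire proof is the word ``Straightforward''): polynomial multiplication for $A_1,A_2,A_4,A_5,A_6$ and a geometric-series expansion of the reciprocals for $A_3,A_7,A_8$, with all coefficients matching the stated formulas. No gaps.
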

 
\begin{proof}
Straightforward.
\end{proof}

\subsubsection{Linear parts}

We denote by $A_i^j$ the $j$th order term in $A_i$. For example, $A_3^1 =  -\frac{r(\theta) + r(\eta)}{2-2\cos(\theta-\eta)}$.

\begin{lemma}\label{derivative}
Let $\mathcal{F}_i$'s and $A_{i}$'s be defined as before. Then
\begin{align}
&\frac{d}{dt} \mathcal{F}_1(b,tg,tr)\bigg|_{t=0} = -\fint \frac{g(\eta)\sin(\theta-\eta)}{2-2\cos(\theta-\eta)}  d\eta+ \left( \Omega-\frac{b}{2} \right) r'(\theta),\label{linearf1}\\
&\frac{d}{dt} \tilde{\mathcal{F}}_2(b,tg,tr)\bigg|_{t=0} = \left( \frac{b}{2}-\Omega \right) g(\theta) + b^2 \left( \fint \frac{r(\theta)-r(\eta)}{2-2\cos(\theta-\eta)} d\eta - r(\theta) \right). \label{linearf2}
\end{align}
\end{lemma}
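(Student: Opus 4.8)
\textbf{Proof proposal for Lemma~\ref{derivative}.}

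The plan is to linearize the two functionals $\mathcal{F}_1$ and $\tilde{\mathcal{F}}_2$ directly from the representations \eqref{functionalf1}--\eqref{functionalf2} in terms of the $A_i$, using the expansions recorded in Lemma~\ref{expansion}. Since $\frac{d}{dt}$ at $t=0$ of a product picks out exactly the first-order term of that product, I would write $\mathcal{F}_1 = \fint A_1 A_2 A_3\, d\eta + \Omega A_4$ and note that the linear part of $A_1 A_2 A_3$ about $(g,r)=(0,0)$ is $A_1^0 A_2^0 A_3^1 + A_1^0 A_2^1 A_3^0 + A_1^1 A_2^0 A_3^0$, where the superscripts denote the homogeneous degree in $(g,r)$ and $A_1^0 = b$, $A_2^0 = -\sin(\theta-\eta)$, $A_3^0 = \frac{1}{2-2\cos(\theta-\eta)}$ are read off from Lemma~\ref{expansion}. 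Similarly the linear part of $\Omega A_4$ is $\Omega r'(\theta)$.

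For $\mathcal{F}_1$: the term $A_1^0 A_2^0 A_3^1 = b\,\sin(\theta-\eta)\cdot\frac{r(\theta)+r(\eta)}{2-2\cos(\theta-\eta)}$, the term $A_1^0 A_2^1 A_3^0 = \frac{b}{2-2\cos(\theta-\eta)}\big[r'(\theta)(\cos(\theta-\eta)-1)-(r(\theta)+r(\eta))\sin(\theta-\eta)\big]$, and $A_1^1 A_2^0 A_3^0 = -\frac{g(\eta)\sin(\theta-\eta)}{2-2\cos(\theta-\eta)}$. Adding the first two, the $\frac{r(\theta)+r(\eta)}{2-2\cos}$ pieces cancel against each other up to sign, and one is left with $-\frac{b}{2}r'(\theta)$ after using $\fint \frac{\cos(\theta-\eta)-1}{2-2\cos(\theta-\eta)}\,d\eta = -\frac12$; together with $\Omega r'(\theta)$ and the $g$-term this gives \eqref{linearf1}. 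For $\tilde{\mathcal{F}}_2 = \fint A_5 A_6 A_7 A_3\, d\eta - \Omega A_8$, the linear part of the four-fold product is $A_5^0 A_6^0 A_7^0 A_3^1 + A_5^0 A_6^1 A_7^0 A_3^0 + A_5^0 A_6^0 A_7^1 A_3^0 + A_5^1 A_6^0 A_7^0 A_3^0$ with $A_5^0 = b^2$, $A_6^0 = 1-\cos(\theta-\eta)$, $A_7^0 = 1$, and the linear part of $-\Omega A_8$ is $-\Omega(g(\theta))$ (the $-br'(\theta)^2$ term in $A_8$ is quadratic). Substituting $A_6^1$, $A_7^1 = -2r(\theta)$, $A_5^1 = b(g(\theta)+g(\eta))$, and $A_3^1$, most terms with $(1-\cos(\theta-\eta))$ factors simplify: $A_5^0 A_6^0 A_7^0 A_3^1 = -b^2\frac{r(\theta)+r(\eta)}{2}$, and $A_5^0 A_6^1 A_7^0 A_3^0 = \frac{b^2}{2-2\cos}\big[(r(\theta)+r(\eta))(1-\cos(\theta-\eta)) + (r(\theta)-r(\eta)) - r'(\theta)\sin(\theta-\eta)\big]$; the $(r(\theta)+r(\eta))/2$ terms cancel, the $r'(\theta)\sin(\theta-\eta)$ term integrates to zero by oddness, and what survives is $b^2\fint\frac{r(\theta)-r(\eta)}{2-2\cos(\theta-\eta)}\,d\eta$. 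The term $A_5^0 A_6^0 A_7^1 A_3^0 = -b^2 r(\theta)$, and the $g$-terms give $A_5^1 A_6^0 A_7^0 A_3^0 = b\,\fint\frac{(g(\theta)+g(\eta))(1-\cos(\theta-\eta))}{2-2\cos(\theta-\eta)}\,d\eta = \frac{b}{2}(g(\theta)) + \frac{b}{2}\fint g(\eta)\,d\eta$; since $g \in X^k_c$ has zero mean, the last average vanishes, leaving $\frac{b}{2}g(\theta)$. Collecting $\frac{b}{2}g(\theta) - \Omega g(\theta) + b^2\big(\fint\frac{r(\theta)-r(\eta)}{2-2\cos(\theta-\eta)}\,d\eta - r(\theta)\big)$ yields \eqref{linearf2}.

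The main obstacle is purely bookkeeping: keeping track of the superscript combinatorics for the four-fold product in $\tilde{\mathcal{F}}_2$ and, more delicately, handling the principal-value integrals $\fint \frac{\sin(\theta-\eta)}{2-2\cos(\theta-\eta)}\,d\eta$ and $\fint\frac{\cos(\theta-\eta)-1}{2-2\cos(\theta-\eta)}\,d\eta = -\frac12$ correctly — the first is an odd kernel so its PV against a constant vanishes, but against $g(\eta)$ it produces (a multiple of) the Hilbert transform, which is why the $g$-dependence survives only through the nonlocal term in \eqref{linearf1}. I would make these elementary Fourier identities precise (they are exactly \eqref{lemma1}, \eqref{lemma2} cited in Lemma~\ref{linearized}) rather than recompute them here. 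Everything else is the "Straightforward" differentiation already flagged in the proof of Lemma~\ref{expansion}.
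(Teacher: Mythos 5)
Your proposal is correct and follows essentially the same route as the paper's proof: both collect the first-order terms of the products $A_1A_2A_3+\Omega A_4$ and $A_5A_6A_7A_3-\Omega A_8$ from Lemma~\ref{expansion}, observe the same cancellations (the $(r(\theta)+r(\eta))\sin(\theta-\eta)$ pieces in $\mathcal{F}_1$ and the $(r(\theta)+r(\eta))/2$ pieces in $\tilde{\mathcal{F}}_2$), and finish with the identities $\fint\frac{\sin(\theta-\eta)}{2-2\cos(\theta-\eta)}\,d\eta=0$ and $\fint g(\eta)\,d\eta=\fint r(\eta)\,d\eta=0$. No gaps.
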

\begin{proof}
We compute $\mathcal{F}_1$ first. In  view of \eqref{functionalf1}, we collect the linear terms in $A_1A_2A_3 + \Omega A_4$ from Lemma~\ref{expansion}.  Hence we have
\begin{align*}
\mathcal{F}_1(b,tg,tr) & = \fint -\frac{tg(\eta)\sin(\theta-\eta)}{2-2\cos(\theta-\eta)}  + bt\frac{r'(\theta)(\cos(\theta-\eta)-1) - (r(\theta) + r(\eta))\sin(\theta-\eta)}{2-2\cos(\theta-\eta)} \\
& \ +\frac{bt\sin(\theta-\eta)(r(\theta)+r(\eta))}{2-2\cos(\theta-\eta)}  d\eta + t\Omega r'(\theta)  + O(t^2) \\
 & = -\fint \frac{tg(\eta)\sin(\theta-\eta)}{2-2\cos(\theta-\eta)} d\eta + t \left( \Omega-\frac{b}{2} \right)  r'(\theta)  + O(t^2).
\end{align*}
 Thus we obtain \eqref{linearf1} by differentiating with respect to $t$.

In order to compute the derivative of $\tilde{\mathcal{F}}_2$, we collect the linear terms in \eqref{functionalf2} from Lemma~\ref{expansion} and obtain
\begin{align*}
\tilde{\mathcal{F}}_2(b,tg,tr) & = \fint   \frac{bt(g(\theta)+g(\eta))}{2} \\
& \quad +  tb^2 \left[ \frac{(r(\theta)+r(\eta))}{2} + \frac{(r(\theta)-r(\eta))}{2-2\cos(\theta-\eta)} -\frac{r'(\theta)\sin(\theta-\eta)}{2-2\cos(\theta-\eta)}\right] \\
& \quad - tb^2 r(\theta) -\frac{tb^2}{2}(r(\theta) + r(\eta))  - t\Omega g(\theta) d\eta + O(t^2)\\
& = t\left( \frac{b}{2} - \Omega \right)g(\theta) + tb^2\left( \fint \frac{r(\theta)-r(\eta)}{2-2\cos(\theta-\eta)}d\eta - r(\theta)\right) + O(t^2),
\end{align*}
where we used $\fint g(\eta)d\eta = \fint r(\eta) d\eta = 0$ and $\fint \frac{\sin(\theta-\eta)}{2-2\cos(\theta-\eta)} d\eta = 0$. By differentiating in $t$, we obtain the desired result \eqref{linearf2}. 
\end{proof}
  
  \subsubsection{Quadratic parts}
 Now we compute the quadratic expansion of $\mathcal{F}_1$ and $\tilde{\mathcal{F}}_2$.
 
 \begin{lemma}\label{quadratic_1}
 Let $\mathcal{F}_i$'s and $A_{i}$'s be defined as before. Then
 \begin{align}
 & \mathcal{F}_1 =   \left( \frac{b}{2} + \Omega \right) r(\theta)r'(\theta) -b\fint  \frac{r'(\theta)(r(\theta)-r(\eta))}{2-2\cos(\theta-\eta)} d\eta + b\fint \frac{(r(\theta)-r(\eta))^2\sin(\theta-\eta)}{(2-2\cos(\theta-\eta))^2} d\eta \nonumber \\
 & \quad \quad +\text{linear terms}+ O(r^3 + g^3), \label{quadraticf1}\\
 & \tilde{\mathcal{F}}_2 = \frac{3b^2}{2}r(\theta)^2  - bg(\theta)r(\theta) + b\left(\Omega-\frac{b}{2} \right) r'(\theta)^2  \nonumber\\
  & \quad \quad -\frac{b^2}{2}\fint \frac{(r(\theta)-r(\eta))(5r(\theta) + r(\eta))}{2-2\cos(\theta-\eta)} d\eta + b\fint \frac{(g(\theta)+g(\eta))(r(\theta) - r(\eta))}{2-2\cos(\theta-\eta)} d\eta \nonumber\\
   & \quad \quad - b \fint \frac{ g(\eta)r'(\theta) \sin(\theta-\eta)}{2-2\cos(\theta-\eta)} d\eta + \text{linear terms}+ O(r^3 + g^3). \label{quadraticf2}
 \end{align}
 \end{lemma}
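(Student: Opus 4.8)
The plan is to prove Lemma~\ref{quadratic_1} by the same bookkeeping strategy used for the linear part in Lemma~\ref{derivative}: substitute $(g,r)\mapsto(tg,tr)$ into the expressions \eqref{functionalf1}--\eqref{functionalf2}, use the expansions of the $A_i$ from Lemma~\ref{expansion}, and collect the coefficient of $t^2$. Concretely, for $\mathcal{F}_1 = \fint A_1 A_2 A_3\, d\eta + \Omega A_4$, I would write $A_1 = b + t g(\eta)$, $A_2 = A_2^0 + t A_2^1 + t^2 A_2^2 + \cdots$, $A_3 = A_3^0 + t A_3^1 + t^2 A_3^2 + \cdots$, and $A_4 = t A_4^1 + t^2 A_4^2$, where the superscripts label the homogeneity degree in $(g,r)$ as read off from Lemma~\ref{expansion}. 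The quadratic term of $A_1 A_2 A_3$ is then
\[
b\,A_2^0 A_3^2 + b\,A_2^1 A_3^1 + b\,A_2^2 A_3^0 + g(\eta)\,A_2^0 A_3^1 + g(\eta)\,A_2^1 A_3^0,
\]
plus $\Omega A_4^2 = \Omega r(\theta)r'(\theta)$ from the non-integral part. An identical scheme applies to $\tilde{\mathcal{F}}_2 = \fint A_5 A_6 A_7 A_3\, d\eta - \Omega A_8$, now a product of four factors, so the quadratic term is the sum of all ways to distribute total degree $2$ among $A_5, A_6, A_7, A_3$ (using $A_5 = b^2 + tb(g(\theta)+g(\eta)) + t^2 g(\theta)g(\eta)$, etc.), minus $\Omega A_8^2 = \Omega(2br(\theta)r'(\theta)^2 - g(\theta)r'(\theta)^2)$, which is cubic and hence does not contribute—so actually only $-\Omega A_8$ up to its linear term matters, with $A_8^1$ contributing a linear term absorbed into ``linear terms.''

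After assembling these products I would simplify each resulting integrand. The key identities needed are the ones already invoked in the paper: $\fint \frac{\sin(\theta-\eta)}{2-2\cos(\theta-\eta)}\,d\eta = 0$, $\fint g(\eta)\,d\eta = \fint r(\eta)\,d\eta = 0$, and the Fourier-mode evaluations behind \eqref{lemma1}--\eqref{lemma2} that let one convert kernels like $\frac{r(\theta)-r(\eta)}{2-2\cos(\theta-\eta)}$ into explicit series; these are used to merge or cancel terms such as the $\fint \frac{r(\theta)+r(\eta)}{2-2\cos(\theta-\eta)}$-type pieces and to reduce the double-kernel term $A_2^0 A_3^2$ (which contains $\frac{(r(\theta)-r(\eta))^2}{(2-2\cos(\theta-\eta))^2}$) to the form displayed in \eqref{quadraticf1}. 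Products involving $A_2^0 = -\sin(\theta-\eta)$ combined with $A_3^1$ produce exactly the $\fint \frac{(r(\theta)+r(\eta))\sin(\theta-\eta)}{2-2\cos(\theta-\eta)}$ terms, which either integrate to zero by oddness against the mean or recombine with $A_2^1 A_3^0$; careful grouping is what yields the coefficient $\frac{b}{2}+\Omega$ in front of $r(\theta)r'(\theta)$ and the $\frac{3b^2}{2}$, $-b$, $b(\Omega-\frac b2)$ coefficients in \eqref{quadraticf2}.

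The main obstacle is purely organizational rather than conceptual: the $\tilde{\mathcal{F}}_2$ expansion is a four-fold product and the second-order term has on the order of a dozen summands, several of which are singular integrals that only simplify after using oddness in $\theta-\eta$ and the mean-zero conditions; a single sign error or a dropped $\fint \frac{\sin}{2-2\cos}$ term would corrupt the final coefficients, which in turn feed into Proposition~\ref{explicitvalues} and ultimately the $x^2 - 4t^2$ normal form. I would therefore carry out the collection in two stages—first list every degree-$2$ product with its combinatorial coefficient, then simplify term by term—and cross-check the $\mathcal{F}_1$ computation against the already-established linear formula \eqref{linearf1} by verifying that the $O(t)$ terms reproduce it. The non-integral (local) pieces coming from $A_4$, $A_7$, $A_8$ are elementary and can be handled by direct multiplication of the displayed expansions in Lemma~\ref{expansion}, contributing the $r(\theta)r'(\theta)$, $r'(\theta)^2$ and $g(\theta)r(\theta)$ terms.
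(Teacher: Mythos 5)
Your overall scheme is exactly the paper's: substitute the expansions of Lemma~\ref{expansion} into \eqref{functionalf1}--\eqref{functionalf2}, enumerate all products of total homogeneity degree $2$, and simplify using $\fint \frac{\sin(\theta-\eta)}{2-2\cos(\theta-\eta)}\,d\eta=0$ and the mean-zero conditions. The combinatorial list you give for $A_1A_2A_3$ agrees with the paper's (with $A_1^2=0$), and the four-fold product for $\tilde{\mathcal{F}}_2$ is organized the same way.

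However, there is one concrete error that would corrupt the final coefficients: you misidentify $A_8^2$. From Lemma~\ref{expansion}, $A_8 = b + g(\theta) - br'(\theta)^2 + \bigl[2br(\theta)r'(\theta)^2 - g(\theta)r'(\theta)^2\bigr] + O(r^4+g^4)$, so the \emph{quadratic} term is $A_8^2 = -br'(\theta)^2$; the bracketed expression you quote is the cubic term $A_8^3$. Consequently $-\Omega A_8^2 = +\Omega b\,r'(\theta)^2$ does contribute at second order, and it is precisely what combines with the $-\tfrac{b^2}{2}r'(\theta)^2$ coming from $A_5^0A_6^0A_7^2A_3^0$ (since $\fint A_6^0A_3^0\,d\eta=\tfrac12$ and $A_7^2=3r(\theta)^2-r'(\theta)^2$) to produce the coefficient $b\bigl(\Omega-\tfrac{b}{2}\bigr)$ in front of $r'(\theta)^2$ in \eqref{quadraticf2}. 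Dropping it would give $-\tfrac{b^2}{2}$ instead, which at $b=2$, $\Omega=1$ is $-2$ rather than $0$; this error would propagate into \eqref{value2} (changing $-3\cos(4\theta)$), hence into $\hat v$ and the quadratic normal form. With $A_8^2$ restored, your argument matches the paper's proof.
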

 
 \begin{proof}
 We compute $\mathcal{F}_1$ first. By collecting quadratic terms in \eqref{functionalf1} from Lemma~\ref{expansion}, we have (we will have, for example, $A_1^2A_2^0A_3^0 + A_1^0A_2^2A_3^0 + A_1^0A_2^0A_3^2 +  A_1^1A_2^1 A_3^0 + A_1^1A_2^0A_3^1 + A_1^0A_2^1A_3^1 + \Omega A_4^2$).
 \begin{align*}
 \mathcal{F}_1 & =\fint b\left[-\frac{1}{2}r'(\theta)r(\eta) - \frac{r'(\theta)(r(\theta)-r(\eta))}{2-2\cos(\theta-\eta)} - \frac{r(\theta)r(\eta)\sin(\theta-\eta)}{2-2\cos(\theta-\eta)} \right] \\
 & \quad -\frac{b\sin(\theta-\eta)}{2-2\cos(\theta-\eta)}\left[  r(\theta)^2+r(\theta)r(\eta) + r(\eta)^2 - \frac{(r(\theta)-r(\eta))^2}{2-2\cos(\theta-\eta)}\right] \\
 & \quad +  \left[ - \frac{g(\eta)r'(\theta)}{2} - \frac{g(\eta)(r(\theta)+r(\eta))\sin(\theta-\eta)}{2-2\cos(\theta-\eta)} \right] + \frac{g(\eta)(r(\theta) + r(\eta))\sin(\theta-\eta)}{2-2\cos(\theta-\eta)} \\
 & \quad + \left[ \frac{br'(\theta)(r(\theta)+r(\eta))}{2} + \frac{b(r(\theta)+r(\eta))^2\sin(\theta-\eta)}{2-2\cos(\theta-\eta)}\right] + \Omega r(\theta)r'(\theta) d\eta + O(r^3 + g^3) \\
  & = \fint \left( \frac{b}{2} + \Omega \right) r(\theta)r'(\theta) - \frac{br'(\theta)(r(\theta)-r(\eta))}{2-2\cos(\theta-\eta)} + \frac{b(r(\theta)-r(\eta))^2\sin(\theta-\eta)}{(2-2\cos(\theta-\eta))^2} d\eta \\
  & \quad + \text{linear terms}+ O(r^3 + g^3),
 \end{align*}
which yields \eqref{quadraticf1}. Now we will expand $\tilde{\mathcal{F}}_2$ up to the quadratic order. By collecting all quadratic terms in \eqref{functionalf2} from Lemma~\ref{expansion}, we obtain (we will have $A_5^2A_6^0A_7^0A_3^0  + A_5^0A_6^2A_7^0A_3^0 + A_5^0A_6^0A_7^2A_3^0 + A_5^0A_6^0A_7^0A_3^2 +  A_5^1A_6^1A_7^0A_3^0 + A_5^1A_6^0A_7^1A_3^0 + A_5^1A_6^0A_7^0A_3^1 + A_5^0A_6^1A_7^1A_3^0 + A_5^0A_6^1A_7^0A_3^1  + A_5^0A_6^0A_7^1A_3^1 - \Omega A_8^2$),
\begin{align*}
\tilde{\mathcal{F}}_2 & = \fint \frac{g(\theta)g(\eta)}{2} +  \left[ b^2\frac{r(\theta)(r(\theta)-r(\eta))}{2-2\cos(\theta-\eta)} + b^2\frac{r(\theta)r(\eta)}{2} - b^2\frac{r'(\theta)r(\eta)\sin(\theta-\eta)}{2-2\cos(\theta-\eta)} \right]\\
  & \quad + b^2\frac{(3r(\theta)^2 - r'(\theta)^2)}{2}   +  \left[ \frac{b^2(r(\theta)^2 + r(\theta)r(\eta) + r(\eta)^2)}{2} - \frac{b^2}{2}\frac{(r(\theta)-r(\eta))^2}{2-2\cos(\theta-\eta)} \right] \\
   & \quad + \left[ b\frac{(g(\theta)+g(\eta))(r(\theta)+r(\eta))}{2} + b\frac{(g(\theta)+g(\eta))(r(\theta)-r(\eta))}{2-2\cos(\theta-\eta)} - b\frac{(g(\theta)+g(\eta))r'(\theta)\sin(\theta-\eta)}{2-2\cos(\theta-\eta)} \right] \\
   & \quad   -b((g(\theta)+g(\eta)))r(\theta) - b\frac{(g(\theta)+g(\eta))(r(\theta)+r(\eta))}{2} \\
   & \quad + \left[ -b^2 r(\theta)(r(\theta) + r(\eta)) -2b^2\frac{r(\theta)(r(\theta)-r(\eta))}{2-2\cos(\theta-\eta)} + 2b^2 \frac{r(\theta)r'(\theta)\sin(\theta-\eta)}{2-2\cos(\theta-\eta)} \right] \\
   & \quad + \left[ -b^2\frac{(r(\theta)+r(\eta))^2}{2} - b^2 \frac{(r(\theta)^2 - r(\eta)^2)}{2-2\cos(\theta-\eta)} + b^2 \frac{r'(\theta)(r(\theta)+r(\eta))\sin(\theta-\eta)}{2-2\cos(\theta-\eta)}\right] \\
   & \quad + b^2r(\theta)(r(\theta) + r(\eta)) + b\Omega r'(\theta)^2 d\eta + O (r^3 + g^3)\\
   &  = \fint  b^2\frac{(3r(\theta)^2 - r'(\theta)^2)}{2}  - bg(\theta)r(\theta) + b\Omega r'(\theta)^2  \\
   & \quad  + \frac{1}{2-2\cos(\theta-\eta)} \left(-\frac{b^2}{2}(r(\theta)-r(\eta))(5r(\theta) + r(\eta)) + b(g(\theta)+g(\eta))(r(\theta) - r(\eta)) \right)\\
   & \quad - \frac{ bg(\eta)r'(\theta) \sin(\theta-\eta)}{2-2\cos(\theta-\eta)} d\eta + \text{linear terms}+ O(r^3 + g^3),
\end{align*} 
where we used $\fint g(\eta) d\eta =\fint r(\eta) d\eta= \fint \frac{\sin(\theta-\eta)}{2-2\cos(\theta-\eta)} d\eta = 0$. This yields the desired result \eqref{quadraticf2}.
\end{proof}

\begin{lemma}\label{second_derivative}
 Let $\mathcal{F}_i$'s and $A_{i}$'s be defined as before. Then
 \begin{align}
 \frac{d^2}{dtds} \mathcal{F}_1(b,tg+s\tilde{g}&,tr+s\tilde{r}) \bigg|_{t=s=0}  =\left(\frac{b}{2} +\Omega \right)\left( r(\theta)\tilde{r}'(\theta) + \tilde{r}(\theta)r'(\theta) \right)  \nonumber \\ 
 & \quad   -b\fint \frac{r'(\theta)(\tilde{r}(\theta) - \tilde{r}(\eta))}{2-2\cos(\theta-\eta)} d\eta -b\fint \frac{\tilde{r}'(\theta)(r(\theta) - r(\eta))}{2-2\cos(\theta-\eta)} d\eta \nonumber \\
 &   \quad+ 2b\fint \frac{(r(\theta)-r(\eta))(\tilde{r}(\theta)-\tilde{r}(\eta))\sin(\theta-\eta)}{(2-2\cos(\theta-\eta))^2}, \label{second_derivative_f1} 
\end{align}
and
\begin{align}
 \frac{d^2}{dtds} \tilde{\mathcal{F}}_2(b,tg+s\tilde{g}&,tr+s\tilde{r}) \bigg|_{t=s=0}  =  3b^2r(\theta)\tilde{r}(\theta) - b(g(\theta)\tilde{r}(\theta) + \tilde{g}(\theta)r(\theta)) + 2b \left(\Omega - \frac{b}{2}\right) r'(\theta)\tilde{r}'(\theta) \nonumber \\
 & \quad   -\frac{b^2}{2}\fint \frac{(r(\theta)-r(\eta))(5\tilde{r}(\theta)+\tilde{r}(\eta))}{2-2\cos(\theta-\eta)}d\eta -\frac{b^2}{2}\fint \frac{(\tilde{r}(\theta)-\tilde{r}(\eta))(5{r}(\theta)+{r}(\eta))}{2-2\cos(\theta-\eta)}d\eta \nonumber \\
 & \quad + b\fint \frac{(g(\theta)+g(\eta))(\tilde{r}(\theta)-\tilde{r}(\eta))}{2-2\cos(\theta-\eta)} d\eta +  b\fint \frac{(\tilde{g}(\theta)+\tilde{g}(\eta))({r}(\theta)-{r}(\eta))}{2-2\cos(\theta-\eta)} d\eta \nonumber \\
 & \quad - b \fint \frac{g(\eta)\tilde{r}'(\theta)\sin(\theta-\eta)}{2-2\cos(\theta-\eta)} d\eta - b \fint \frac{\tilde{g}(\eta){r}'(\theta)\sin(\theta-\eta)}{2-2\cos(\theta-\eta)} d\eta. \label{second_derivative_f2}
 \end{align}
\end{lemma}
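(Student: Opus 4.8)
The plan is to deduce both formulas from the second-order expansions already established in Lemma~\ref{quadratic_1}, by \emph{polarization}, so that no fresh expansion of the $A_i$'s is required. Write $\xi=(g,r)$ and $\tilde\xi=(\tilde g,\tilde r)$. Since $\mathcal{F}=(\mathcal{F}_1,\tilde{\mathcal{F}}_2)$ is $C^3$ near the bifurcation point by Proposition~\ref{regularity1}, Taylor's formula in the $(g,r)$ variable reads $\mathcal{F}_i(b,\zeta)=\mathcal{F}_i(b,0)+D\mathcal{F}_i(b,0)\zeta+\tfrac12 D^2\mathcal{F}_i(b,0)[\zeta,\zeta]+O(\|\zeta\|^3)$ with $D^2\mathcal{F}_i(b,0)$ symmetric bilinear. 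Substituting $\zeta=t\xi+s\tilde\xi$ and applying $\partial_t\partial_s$ at $t=s=0$ annihilates the constant term (present only for $\tilde{\mathcal{F}}_2$, which is constant at $\zeta=0$), the linear term, and the $O(\|\zeta\|^3)$ remainder, and yields
\[
\frac{d^2}{dt\,ds}\mathcal{F}_i(b,t\xi+s\tilde\xi)\Big|_{t=s=0}=D^2\mathcal{F}_i(b,0)[\xi,\tilde\xi].
\]
By uniqueness of the Taylor expansion, on the diagonal $\tfrac12 D^2\mathcal{F}_i(b,0)[\xi,\xi]$ is exactly the quadratic part $Q_i(g,r)$ displayed in \eqref{quadraticf1}--\eqref{quadraticf2}.

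It then remains to evaluate the bilinear form through the polarization identity $D^2\mathcal{F}_i(b,0)[\xi,\tilde\xi]=Q_i(g+\tilde g,r+\tilde r)-Q_i(g,r)-Q_i(\tilde g,\tilde r)$, valid since $Q_i$ is a quadratic form, and to simplify term by term. The pointwise products polarize as $r(\theta)r'(\theta)\mapsto r(\theta)\tilde r'(\theta)+\tilde r(\theta)r'(\theta)$, $g(\theta)r(\theta)\mapsto g(\theta)\tilde r(\theta)+\tilde g(\theta)r(\theta)$, and $r'(\theta)^2\mapsto 2r'(\theta)\tilde r'(\theta)$; for the singular integrals one uses $(a+\tilde a)(c+\tilde c)-ac-\tilde a\tilde c=a\tilde c+\tilde a c$, applied successively with $a=r(\theta)-r(\eta)$ and $c=r'(\theta)$, then $a=r(\theta)-r(\eta)$ and $c=5r(\theta)+r(\eta)$, then $a=r(\theta)-r(\eta)$ and $c=g(\theta)+g(\eta)$, while the square $(r(\theta)-r(\eta))^2$ in \eqref{quadraticf1} becomes $2(r(\theta)-r(\eta))(\tilde r(\theta)-\tilde r(\eta))$. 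Collecting these against \eqref{quadraticf1}--\eqref{quadraticf2} reproduces exactly \eqref{second_derivative_f1}--\eqref{second_derivative_f2}.

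Equivalently, and closer to the bookkeeping of Lemma~\ref{quadratic_1}, one may instead substitute $g\mapsto tg+s\tilde g$, $r\mapsto tr+s\tilde r$ into the expansions of the $A_i$'s from Lemma~\ref{expansion} and extract the coefficient of $ts$ in $A_1A_2A_3+\Omega A_4$ and in $A_5A_6A_7A_3-\Omega A_8$, discarding mean-zero pieces by $\fint g=\fint r=\fint\tilde g=\fint\tilde r=\fint\frac{\sin(\theta-\eta)}{2-2\cos(\theta-\eta)}\,d\eta=0$; this is the computation of Lemma~\ref{quadratic_1} carried out with two independent increments.

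There is no genuine analytic obstacle: $\mathcal{F}$ is already known to be $C^3$ and the requisite quadratic expansion is done, so the argument is purely algebraic. The one point demanding care is clerical: the non-symmetric-looking bilinear term $\fint\frac{g(\eta)r'(\theta)\sin(\theta-\eta)}{2-2\cos(\theta-\eta)}\,d\eta$ is bilinear in the \emph{pair} $(g,r)$ rather than symmetric under $\theta\leftrightarrow\eta$, so its polarization contributes \emph{both} $\fint\frac{g(\eta)\tilde r'(\theta)\sin(\theta-\eta)}{2-2\cos(\theta-\eta)}\,d\eta$ and $\fint\frac{\tilde g(\eta)r'(\theta)\sin(\theta-\eta)}{2-2\cos(\theta-\eta)}\,d\eta$ to \eqref{second_derivative_f2}; likewise the asymmetric terms $\fint\frac{r'(\theta)(r(\theta)-r(\eta))}{2-2\cos(\theta-\eta)}\,d\eta$ and $\fint\frac{(g(\theta)+g(\eta))(r(\theta)-r(\eta))}{2-2\cos(\theta-\eta)}\,d\eta$ each split into two, and one must not double-count the genuine squares.
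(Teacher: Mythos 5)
Your proposal is correct and is essentially the paper's own argument: the paper likewise substitutes $g\mapsto tg+s\tilde g$, $r\mapsto tr+s\tilde r$ into the quadratic expansions \eqref{quadraticf1}--\eqref{quadraticf2} of Lemma~\ref{quadratic_1} and reads off the coefficient of $st$, which is precisely the polarization of the quadratic form you describe. Your term-by-term bookkeeping (including the splitting of the asymmetric integrals into two contributions) matches \eqref{second_derivative_f1}--\eqref{second_derivative_f2} exactly.
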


\begin{proof}
We compute $\mathcal{F}_1$ first. From \eqref{quadraticf1} in Lemma~\ref{quadratic_1}, we collect all $st$ terms and obtain
\begin{align*}
\mathcal{F}_1(b,tg+s\tilde{g},tr+s\tilde{r}) & =  st \left[ \left(\frac{b}{2} +\Omega \right)\left( r(\theta)\tilde{r}'(\theta) + \tilde{r}(\theta)r'(\theta) \right) \right . \\ 
 & \left. \quad -b\fint \frac{r'(\theta)(\tilde{r}(\theta) - \tilde{r}(\eta))}{2-2\cos(\theta-\eta)} d\eta -b\fint \frac{\tilde{r}'(\theta)(r(\theta) - r(\eta))}{2-2\cos(\theta-\eta)} d\eta \right. \\
 & \left. \quad + 2b\fint \frac{(r(\theta)-r(\eta))(\tilde{r}(\theta)-\tilde{r}(\eta))}{(2-2\cos(\theta-\eta))^2}  \right] + \text{linear terms} + O(t^2 + s^2).
\end{align*}
Once we differentiate the above equation with respect to  $t$ and $s$,  the desired result \eqref{second_derivative_f1} follows immediately. Similarly, we  collect all st terms from \eqref{quadraticf2} and obtain
\begin{align*}
 \tilde{\mathcal{F}}_2(b,tg+s\tilde{g},tr+s\tilde{r}) & = st \left[ 3b^2r(\theta)\tilde{r}(\theta) - b(g(\theta)\tilde{r}(\theta) + \tilde{g}(\theta)r(\theta)) + 2b \left(\Omega - \frac{b}{2}\right) r'(\theta)\tilde{r}'(\theta) \right. \\
 & \left. \quad  -\frac{b^2}{2}\fint \frac{(r(\theta)-r(\eta))(5\tilde{r}(\theta)+\tilde{r}(\eta))}{2-2\cos(\theta-\eta)}d\eta -\frac{b^2}{2}\fint \frac{(\tilde{r}(\theta)-\tilde{r}(\eta))(5{r}(\theta)+{r}(\eta))}{2-2\cos(\theta-\eta)}d\eta \right. \\
 & \left. \quad + b\fint \frac{(g(\theta)+g(\eta))(\tilde{r}(\theta)-\tilde{r}(\eta))}{2-2\cos(\theta-\eta)} d\eta +  b\fint \frac{(\tilde{g}(\theta)+\tilde{g}(\eta))({r}(\theta)-{r}(\eta))}{2-2\cos(\theta-\eta)} d\eta \right. \\
 & \left. \quad - b \fint \frac{g(\eta)\tilde{r}'(\theta)\sin(\theta-\eta)}{2-2\cos(\theta-\eta)} d\eta - b \fint \frac{\tilde{g}(\eta){r}'(\theta)\sin(\theta-\eta)}{2-2\cos(\theta-\eta)} d\eta \right] \\
 &\quad + \text{linear terms} + O(t^2 + s^2). 
\end{align*}
Once we differentiate the above equation with respect to $t$ and $s$,  the desired result \eqref{second_derivative_f2} follows immediately.
\end{proof}

 \subsubsection{Cubic parts}
 We will expand $\tilde{\mathcal{F}}_2$ up to  cubic order with respect to the $r$ variable (we fix $g = 0$). We denote $B:=A_3A_6$ so that \eqref{functionalf2} can be written as (with $g = 0$)
 \begin{align}\label{functionalf21}
 \tilde{\mathcal{F}}_2 = b^2\fint B d\eta A_7 - \Omega A_8. 
 \end{align}
 We will first expand $B$ up to  cubic order. 
  \begin{lemma}\label{cubic1}
 Let $\tilde{\mathcal{F}}_2$, $A_i$ and $B$ be as defined as before. Then
 \begin{align*}
 \fint B d\eta & = \frac{1}{2} + \fint \frac{r(\theta)-r(\eta)}{2-2\cos(\theta-\eta)} d\eta - \frac{1}{2}\fint \frac{r(\theta)^2 - r(\eta)^2}{2-2\cos(\theta-\eta)} d\eta \\
  &  + \left[ \frac{1}{2} \fint \frac{(r(\theta)-r(\eta))(r(\theta)^2 + r(\eta)^2)}{2-2\cos(\theta-\eta)}d\eta - \fint \frac{(r(\theta)-r(\eta))^3}{(2-2\cos(\theta-\eta))^2} d\eta + \fint \frac{r'(\theta)(r(\theta)-r(\eta))^2\sin(\theta-\eta)}{(2-2\cos(\theta-\eta))^2} d\eta \right]\\
  &  + O(r^4).
 \end{align*}
 \end{lemma}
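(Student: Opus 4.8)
The plan is to obtain the claimed expansion of $\fint B\,d\eta$, with $B := A_3 A_6$ and $g=0$, by straightforward multiplication of the expansions of $A_3$ and $A_6$ already recorded in Lemma~\ref{expansion}, and then to integrate in $\eta$ using only two elementary facts: $\fint r(\eta)\,d\eta = 0$ and $\fint \frac{\sin(\theta-\eta)}{2-2\cos(\theta-\eta)}\,d\eta = 0$.

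First I would note that, from its definition, $A_6$ is a polynomial of degree exactly two in $(r(\theta),r(\eta),r'(\theta))$, so $A_6 = A_6^0 + A_6^1 + A_6^2$ with no higher-order terms, whereas $A_3 = A_3^0+A_3^1+A_3^2+A_3^3+O(r^4)$ by Lemma~\ref{expansion}. Collecting by total order therefore gives $B^0 = A_3^0 A_6^0$, $B^1 = A_3^0 A_6^1 + A_3^1 A_6^0$, $B^2 = A_3^0 A_6^2 + A_3^1 A_6^1 + A_3^2 A_6^0$, and $B^3 = A_3^1 A_6^2 + A_3^2 A_6^1 + A_3^3 A_6^0$ (the term $A_3^0 A_6^3$ is absent). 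Writing $D := 2-2\cos(\theta-\eta)$, $S := \sin(\theta-\eta)$, and using $1-\cos(\theta-\eta) = D/2$ to clear the factor $1-\cos(\theta-\eta)$ in the denominator of $A_3^3$, each $A_3^i$ becomes a finite sum of monomials in $r(\theta),r(\eta)$ times $1$, $1/D$, or $1/D^2$, so the same holds for each $B^j$.

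Next I would carry out the algebraic simplification of $B^1,B^2,B^3$. The pleasant feature here, to be checked by direct computation, is that for every $j\ge 1$ the purely polynomial (non-singular) contributions to $B^j$ cancel identically, and the surviving $1/D$ and $1/D^2$ terms collapse — using $p^3+p^2q+pq^2+q^3=(p+q)(p^2+q^2)$, $(p-q)(p^2+pq+q^2)=p^3-q^3$, and $(p+q)q-(p^2+pq+q^2)=-p^2$ with $p=r(\theta)$, $q=r(\eta)$ — to
\[
B^1 = \frac{r(\theta)-r(\eta)}{D} - \frac{r'(\theta)S}{D},\qquad B^2 = -\frac{r(\theta)^2-r(\eta)^2}{2D} + \frac{r(\theta)r'(\theta)S}{D},
\]
\[
B^3 = \frac{(r(\theta)-r(\eta))(r(\theta)^2+r(\eta)^2)}{2D} - \frac{r(\theta)^2 r'(\theta)S}{D} - \frac{(r(\theta)-r(\eta))^3}{D^2} + \frac{r'(\theta)(r(\theta)-r(\eta))^2 S}{D^2}.
\]
Finally I would integrate in $\eta$: since $\fint \frac{S}{D}\,d\eta = 0$, every term of the form $(\text{function of }\theta)\cdot\frac{S}{D}$ integrates to zero (this removes the last term of $B^1$, the last term of $B^2$, and the $\frac{r(\theta)^2 r'(\theta)S}{D}$ term of $B^3$), leaving exactly $\fint B^0\,d\eta = \tfrac12$, $\fint B^1\,d\eta = \fint\frac{r(\theta)-r(\eta)}{D}\,d\eta$, $\fint B^2\,d\eta = -\tfrac12\fint\frac{r(\theta)^2-r(\eta)^2}{D}\,d\eta$, and the three claimed cubic integrals. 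Summing over $j$ and absorbing the remainder into $O(r^4)$ finishes the proof.

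I expect the only genuine work to be the bookkeeping in the simplification step — making the polynomial parts visibly cancel and tracking the signs on the $1/D$ versus $1/D^2$ pieces — together with the routine remark that the two $1/D^2$ integrals in the statement are legitimate principal-value integrals: their numerators $(r(\theta)-r(\eta))^3$ and $r'(\theta)(r(\theta)-r(\eta))^2\sin(\theta-\eta)$ vanish to third order at $\eta=\theta$ while $D^2$ vanishes to fourth order, so the integrands behave like $O(|\theta-\eta|^{-1})$ with odd leading part and the principal values converge; this is handled exactly as in the references cited after Proposition~\ref{regularity1}.
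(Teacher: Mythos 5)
Your proposal is correct and follows essentially the same route as the paper: expand $B=A_3A_6$ order by order from Lemma~\ref{expansion}, observe that the purely polynomial parts cancel and that all $(\text{function of }\theta)\cdot\sin(\theta-\eta)/(2-2\cos(\theta-\eta))$ terms integrate to zero, and collect the surviving singular integrals. The only cosmetic differences are that you simplify $B^j$ algebraically before integrating (the paper integrates term by term and then combines) and that you explicitly note $A_6^3=0$, which the paper carries along formally.
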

 
 \begin{proof}
Using \eqref{expansion}, we will compute the constant ($=: B^0$), linear ($=: B^1$), quadratic ($=: B^2$) and cubic ($=: B^3$) terms of $B = A_3 A_6$ separately. It is straightforward that
\begin{align}\label{0thorder}
\fint B^0 d\eta = \frac{1}{2}.
\end{align}
For $B^1$, we compute $B^1 (= A_3^1 A_6^0 + A_3^0 A_6^1)$,
\begin{align}\label{1storder}
\fint B^1 d\eta = \fint \frac{r(\theta)-r(\eta)}{2-2\cos(\theta-\eta)} - \frac{r'(\theta)\sin(\theta-\eta)}{2-2\cos(\theta-\eta)}d\eta = \fint \frac{r(\theta)-r(\eta)}{2-2\cos(\theta-\eta)} d\eta,
\end{align}
where we used $\fint \frac{\sin(\theta-\eta)}{2-2\cos(\theta-\eta)} d\eta = 0$. For $B^2$, we compute $A_3^2A_6^0 + A_3^0A_6^2 + A_3^1A_6^1$, hence
\begin{align}\label{secondorder}
\fint B^2 d\eta & = \left[ \frac{r(\theta)^2 + r(\theta)r(\eta) + r(\eta)^2}{2} - \frac{1}{2}\fint \frac{(r(\theta)-r(\eta))^2}{2-2\cos(\theta-\eta)} d\eta \right] \nonumber\\
& \quad + \left[ \fint \frac{r(\theta)(r(\theta)-r(\eta))}{2-2\cos(\theta-\eta)} + \frac{r(\theta)r(\eta)}{2} - \frac{r'(\theta)r(\eta)\sin(\theta-\eta)}{2-2\cos(\theta-\eta)}d\eta \right] \nonumber\\
& \quad + \left[ \fint -\frac{1}{2}(r(\theta)+r(\eta))^2 - \frac{r(\theta)^2 - r(\eta)^2}{2-2\cos(\theta-\eta)} + \frac{r'(\theta)(r(\theta)+r(\eta))\sin(\theta-\eta)}{2-2\cos(\theta-\eta)} d\eta  \right] \nonumber\\
& = -\frac{1}{2}\fint \frac{(r(\theta)^2-r(\eta)^2)}{2-2\cos(\theta-\eta)} d\eta,
\end{align}
where we used $\fint \frac{\sin(\theta-\eta)}{2-2\cos(\theta-\eta)}d\eta = 0$. For $B^3$, we compute $B^3 (= A_3^3A_6^0 + A_3^2 A_6^1 + A_3^1A_6^2 + A_3^0A_6^3)$,
\begin{align}\label{thirdorder}
\fint B^3 d\eta  & = \left[ \fint \frac{(r(\theta)+r(\eta))(r(\theta)-r(\eta))^2}{2-2\cos(\theta-\eta)} d\eta - \frac{1}{2} \fint r(\theta)^3 + r(\theta)^2r(\eta) + r(\theta)r(\eta)^2 + r(\eta)^3  d\eta  \right] \nonumber\\
 & \quad + \left[ \fint \frac{(r(\theta)+r(\eta))(r(\theta)^2 + r(\theta)r(\eta) + r(\eta)^2)}{2} d\eta - \frac{1}{2}\fint \frac{(r(\theta)+r(\eta))(r(\theta)-r(\eta))^2}{2-2\cos(\theta-\eta)} d\eta \right. \nonumber\\
 & \left. \quad + \fint \frac{r(\theta)^3 -r(\eta)^3}{2-2\cos(\theta-\eta)}d\eta - \fint \frac{(r(\theta)-r(\eta))^3}{(2-2\cos(\theta-\eta))^2} d\eta  \right. \nonumber \\
 & \left. \quad - \fint \frac{r'(\theta)(r(\theta)^2 + r(\theta)r(\eta) + r(\eta)^2)\sin(\theta-\eta)}{2-2\cos(\theta-\eta)} d\eta + \fint \frac{r'(\theta)(r(\theta)-r(\eta))^2\sin(\theta-\eta)}{(2-2\cos(\theta-\eta))^2} d\eta \right]  \nonumber \\
 & \quad + \left[  -\fint \frac{r(\theta)(r(\theta)^2-r(\eta)^2)}{2-2\cos(\theta-\eta)} d\eta - \fint \frac{r(\theta)r(\eta)(r(\theta)+r(\eta))}{2} d\eta + \fint \frac{r'(\theta)r(\eta)(r(\theta) + r(\eta))\sin(\theta-\eta)}{2-2\cos(\theta-\eta)} d\eta \right] \nonumber \\
  & = \frac{1}{2}\fint \frac{(r(\theta)-r(\eta))(r(\theta)^2 + r(\eta)^2)}{2-2\cos(\theta-\eta)} d\eta - \fint \frac{(r(\theta)-r(\eta))^3}{(2-2\cos(\theta-\eta))^2} d\eta + \fint \frac{r'(\theta)(r(\theta)-r(\eta))^2\sin(\theta-\eta)}{(2-2\cos(\theta-\eta))^2}.
\end{align}
Thus the desired result follows from \eqref{0thorder}, \eqref{1storder}, \eqref{secondorder} and \eqref{thirdorder}. 
\end{proof}

\begin{lemma}\label{cubic_1}
Let $\tilde{\mathcal{F}}_2$, $A_i$'s and $B$ be as defined as before. Then 
\begin{align*}
\frac{1}{6}\frac{d^3}{dt^3}\tilde{\mathcal{F}}_2(b,0,tr)\bigg|_{t=0} &  = \frac{b^2}{2} \fint \frac{(r(\theta) - r(\eta))(3r(\theta)^2 + 2 r(\theta) r(\eta) + r(\eta)^2)}{2-2\cos(\theta-\eta)} d\eta - b^2 \fint \frac{(r(\theta)-r(\eta))^3}{(2-2\cos(\theta-\eta))^2} d\eta \\
& \quad +  b^2 \fint \frac{r'(\theta)(r(\theta)-r(\eta))^2\sin(\theta-\eta)}{(2-2\cos(\theta-\eta))^2} \eta + b^2\fint \frac{(3r(\theta)^2 - r'(\theta)^2)(r(\theta)-r(\eta))}{2-2\cos(\theta-\eta)} d\eta \\
& \quad + 2b (b-\Omega) r(\theta)r'(\theta)^2 - 2b^2r(\theta)^3 
\end{align*}
\end{lemma}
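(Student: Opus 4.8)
The plan is to exploit homogeneity: writing out $\tilde{\mathcal{F}}_2(b,0,tr)$ and collecting powers of $t$, the degree-$j$ homogeneous part of $\tilde{\mathcal{F}}_2(b,0,\cdot)$ in $r$ contributes exactly $t^j$ times itself, so $\frac{1}{6}\frac{d^3}{dt^3}\tilde{\mathcal{F}}_2(b,0,tr)\big|_{t=0}$ is precisely the cubic part of $\tilde{\mathcal{F}}_2$ with $g\equiv 0$. It therefore suffices to extract the degree-three term from the representation \eqref{functionalf21}, $\tilde{\mathcal{F}}_2 = b^2\left(\fint B\,d\eta\right)A_7 - \Omega A_8$, using the expansion of $\fint B\,d\eta$ up to cubic order from Lemma~\ref{cubic1} and the expansions of $A_7$ and $A_8$ from Lemma~\ref{expansion}.

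First I would record the homogeneous pieces $B^{(j)}$ of $\fint B\,d\eta$ (read off from Lemma~\ref{cubic1}) and $A_7^{(j)}$, $A_8^{(j)}$ of $A_7$, $A_8$ (read off from Lemma~\ref{expansion} with $g=0$, so that $A_8^{(3)} = 2b\,r(\theta)r'(\theta)^2$). Since $A_7$ is a function of $\theta$ alone, the cubic part of the product $\left(\fint B\,d\eta\right)A_7$ is $B^{(0)}A_7^{(3)} + B^{(1)}A_7^{(2)} + B^{(2)}A_7^{(1)} + B^{(3)}$, and each summand is either a local expression in $r(\theta),r'(\theta)$ or an $\eta$-integral already appearing in $B^{(3)}$ multiplied by such a local factor — in particular no new principal-value integrals are generated. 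Multiplying by $b^2$ and subtracting $\Omega A_8^{(3)}$ gives a first, unsimplified formula.

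The only real work is the algebraic tidying. The purely local terms $b^2 B^{(0)}A_7^{(3)} - \Omega A_8^{(3)} = b^2(2r(\theta)r'(\theta)^2 - 2r(\theta)^3) - 2b\Omega r(\theta)r'(\theta)^2$ collapse to $2b(b-\Omega)r(\theta)r'(\theta)^2 - 2b^2 r(\theta)^3$, the last line of the claim. The term $b^2 B^{(1)}A_7^{(2)}$ is already $b^2\fint\frac{(3r(\theta)^2-r'(\theta)^2)(r(\theta)-r(\eta))}{2-2\cos(\theta-\eta)}\,d\eta$. The remaining step is to combine $b^2 B^{(2)}A_7^{(1)} = b^2 r(\theta)\fint\frac{r(\theta)^2-r(\eta)^2}{2-2\cos(\theta-\eta)}\,d\eta$ with the first integral of $b^2 B^{(3)}$, namely $\tfrac{b^2}{2}\fint\frac{(r(\theta)-r(\eta))(r(\theta)^2+r(\eta)^2)}{2-2\cos(\theta-\eta)}\,d\eta$; expanding the numerators and using $3r(\theta)^3 - r(\theta)^2r(\eta) - r(\theta)r(\eta)^2 - r(\eta)^3 = (r(\theta)-r(\eta))(3r(\theta)^2 + 2r(\theta)r(\eta) + r(\eta)^2)$, these two merge into $\tfrac{b^2}{2}\fint\frac{(r(\theta)-r(\eta))(3r(\theta)^2+2r(\theta)r(\eta)+r(\eta)^2)}{2-2\cos(\theta-\eta)}\,d\eta$, while the remaining two integrals of $b^2 B^{(3)}$ pass through verbatim. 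Collecting all contributions reproduces the stated expression.

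I do not anticipate a conceptual obstacle: the analytic subtleties (meaning of the singular integrals, the $\fint\frac{\sin(\theta-\eta)}{2-2\cos(\theta-\eta)}d\eta = 0$ cancellations) are already absorbed into Lemma~\ref{cubic1}, and $A_7$, $A_8$ contribute only polynomial factors in $r(\theta)$ and $r'(\theta)$. The one place to be careful is the bookkeeping — making sure exactly the combinations $B^{(i)}A_7^{(j)}$ with $i+j=3$ are kept, nothing being double-counted, and the polynomial factorization above carried out correctly.
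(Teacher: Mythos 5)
Your proposal is correct and follows essentially the same route as the paper: both extract the cubic homogeneous part of $\tilde{\mathcal{F}}_2(b,0,r)$ from \eqref{functionalf21} as $b^2(B^3A_7^0 + B^2A_7^1 + B^1A_7^2 + B^0A_7^3) - \Omega A_8^3$ using Lemmas~\ref{expansion} and~\ref{cubic1}, and then merge $b^2B^2A_7^1$ with the first integral of $b^2B^3$ via the same factorization of $3r(\theta)^3 - r(\theta)^2r(\eta) - r(\theta)r(\eta)^2 - r(\eta)^3$. Your bookkeeping and the collapse of the local terms to $2b(b-\Omega)r(\theta)r'(\theta)^2 - 2b^2r(\theta)^3$ both check out.
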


\begin{proof}
We first collect all cubic terms of $\tilde{\mathcal{F}}_2 (b,0,r)$ in $r$. From \eqref{functionalf21}, we have that the cubic terms consist of $b^2 (B^3A_7^0 + B^2A_7^1 + B^1A_7^2 + B^0 A_7^3 )- \Omega A_8^3$. Using Lemma~\ref{expansion} and~\ref{cubic1} and the fact that $A_7$ does not depend on $\eta$, we obtain
\begin{align*}\hspace*{-0.8cm}
\tilde{\mathcal{F}}_2 (b,0,r) & = b^2\left[ \frac{1}{2} \fint \frac{(r(\theta)-r(\eta))(r(\theta)^2 + r(\eta)^2)}{2-2\cos(\theta-\eta)}d\eta - \fint \frac{(r(\theta)-r(\eta))^3}{(2-2\cos(\theta-\eta))^2} d\eta + \fint \frac{r'(\theta)(r(\theta)-r(\eta))^2\sin(\theta-\eta)}{(2-2\cos(\theta-\eta))^2} \eta \right] \\
& \quad + b^2\fint \frac{r(\theta)(r(\theta)^2-r(\eta)^2)}{2-2\cos(\theta-\eta)} d\eta + b^2\fint \frac{(3r(\theta)^2 - r'(\theta)^2)(r(\theta)-r(\eta))}{2-2\cos(\theta-\eta)} d\eta + b^2 \left[2r(\theta)r'(\theta)^2 -2r(\theta)^3 \right]\\
& \quad -2b\Omega r(\theta)r'(\theta)^2 + \text{lower order terms} + O(r^4)\\
& = \frac{b^2}{2} \fint \frac{(r(\theta) - r(\eta))(3r(\theta)^2 + 2 r(\theta) r(\eta) + r(\eta)^2)}{2-2\cos(\theta-\eta)} d\eta - b^2 \fint \frac{(r(\theta)-r(\eta))^3}{(2-2\cos(\theta-\eta))^2} d\eta \\
& \quad +  b^2 \fint \frac{r'(\theta)(r(\theta)-r(\eta))^2\sin(\theta-\eta)}{(2-2\cos(\theta-\eta))^2} \eta + b^2\fint \frac{(3r(\theta)^2 - r'(\theta)^2)(r(\theta)-r(\eta))}{2-2\cos(\theta-\eta)}  d\eta\\
& \quad + 2b (b-\Omega) r(\theta)r'(\theta)^2 - 2b^2r(\theta)^3 + \text{lower order terms} + O(r^4).
\end{align*}
Therefore, the desired result follows immediately.
\end{proof}

\subsection{Derivatives of the reduced functional}
We denote $v:= (0,\cos(2\theta))$. Given a pair of functions $(g,r)$, we denote $Q$ be the projection to the second mode of $r$, that is, $Q(g,r) := \left(\frac{1}{\pi} \int r(\theta)\cos(2\theta) d\theta \right) \cos(2\theta)$.
\begin{lemma}\label{somevalues}
Let $\mathcal{F}$, $v$, $Q$ be defined as before. We fix $b=2$ and $\Omega = 1$.  Then, 
\begin{align}
&\partial_b D\mathcal{F}(2,0)v =  ( \sin(2\theta), 0 ), \label{value1}\\
&\frac{1}{2}\frac{d^2}{dt^2}\mathcal{F}(2,tv) \bigg|_{t=0} = (-2\sin(4\theta) , -3\cos(4\theta)), \label{value2} \\
&\partial_b Q\frac{d^2}{dt^2} \mathcal{F}(b,tv) \bigg|_{b=2,t=0} = (0,0), \label{value10}\\
& \tilde{v} := - \left[ D \mathcal{F}(2,0) \right]^{-1} (I - Q)\partial_b D \mathcal{F}(2,0)v  = (2\cos(2\theta),0), \label{value7}\\
& \hat{v} := -\left[ D\mathcal{F}(2,0) \right]^{-1} \frac{d^2}{dt^2}\left[ (I - Q) \mathcal{F}(2,tv) \right] \bigg|_{t=0} =  \left( -8 \cos(4\theta), \frac{3}{2}\cos(4\theta) \right), \label{value4}\\
& \frac{d^2}{dtds} Q\mathcal{F}(2,tv+s\hat{v}) \bigg|_{t=s=0} = (0, -12\cos(2\theta)),  \label{value5}\\
& \frac{1}{3}\frac{d^3}{dt^3} Q \mathcal{F}(2,tv) \bigg|_{t=0} = \left( 0, 4\cos(2\theta) \right), \label{value6}\\
& Q\frac{d^2}{dtds}\mathcal{F}(2,tv+s\tilde{v}) = (0,0), \label{value8}\\
&\frac{1}{2}Q\partial_bD\mathcal{F}(2,0)\hat{v} = (0,0), \label{value9}\\
& 2Q\partial_bD\mathcal{F}(2,0)\tilde{v} = (0,2\cos(2\theta)). \label{value11}
\end{align}
\end{lemma}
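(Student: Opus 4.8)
The plan is to collect all the ingredients that have already been established and assemble them into a single computation. Concretely, the final statement to prove is Lemma~\ref{somevalues}, which asks for ten explicit values. The strategy is to evaluate each one by substituting $v=(0,\cos(2\theta))$, $b=2$, $\Omega=1$ into the derivative formulas from Lemma~\ref{derivative}, Lemma~\ref{quadratic_1}, Lemma~\ref{second_derivative}, and Lemma~\ref{cubic_1}, and then use the explicit integral identities for $\fint \frac{\cos(2n\theta)-\cos(2n\eta)}{2-2\cos(\theta-\eta)}d\eta$ and $\fint \frac{\cos(2n\eta)\sin(\theta-\eta)}{2-2\cos(\theta-\eta)}d\eta$ (referenced above as \eqref{lemma1} and \eqref{lemma2}). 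I would proceed in the order the quantities are listed, since later items build on earlier ones (e.g.\ $\hat v$ and $\tilde v$ require inverting $D\mathcal{F}(2,0)$, which is diagonalized in Fourier modes by Lemma~\ref{linearized}).

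First I would compute $\partial_b D\mathcal{F}(2,0)v$: from Lemma~\ref{linearized}, $D\mathcal{F}(b,0,0)$ acts on the $n$-th mode via the matrix $M_n$ whose entries depend on $b$; differentiating $M_n$ in $b$ at $b=2$, $\Omega=1$ and applying it to $(a_1,b_1)=(0,1)$ (the single nonzero mode of $v$) gives \eqref{value1}. Next, for \eqref{value2}, \eqref{value5}, \eqref{value6} and \eqref{value10}, I would plug $r=\cos(2\theta)$, $g=0$ (and for the mixed derivatives, the appropriate second function) into the quadratic expansions \eqref{quadraticf1}--\eqref{quadraticf2}, the bilinear forms \eqref{second_derivative_f1}--\eqref{second_derivative_f2}, and the cubic formula of Lemma~\ref{cubic_1}; each reduces to a finite linear combination of the integral identities, since powers and products of $\cos(2\theta)$ are finite trigonometric sums. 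For $\tilde v$ in \eqref{value7} and $\hat v$ in \eqref{value4}, I would first compute $(I-Q)\partial_b D\mathcal{F}(2,0)v$ and $\frac{d^2}{dt^2}[(I-Q)\mathcal{F}(2,tv)]|_{t=0}$ using the results just obtained, then invert $D\mathcal{F}(2,0)$ mode-by-mode using the diagonal matrices $M_n = \mathrm{diag}(-\tfrac12, 4(n-1))$ from Lemma~\ref{onedimensionality} (note $n=1$ is excluded precisely because the relevant data lies in $\mathcal{Y}$). Finally \eqref{value8}, \eqref{value9}, \eqref{value11} follow by applying $Q$ (projection onto the $\cos(2\theta)$ component of the second slot) to the already-computed bilinear and $\partial_b D\mathcal{F}$ terms and reading off the second Fourier mode.

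The main obstacle is bookkeeping rather than conceptual: one must carefully track which Fourier modes survive in each product. For instance, $r(\theta)r'(\theta)$ with $r=\cos(2\theta)$ produces only a $\sin(4\theta)$ term, $r(\theta)^2$ produces a constant plus $\cos(4\theta)$, and the singular integrals shift or preserve modes according to \eqref{lemma1}--\eqref{lemma2}; a sign error or a dropped factor anywhere propagates into the wrong final answer, and in particular into the crucial coefficients $2w$ and $-8w$ of Proposition~\ref{explicitvalues} that drive the whole bifurcation. A secondary subtlety is the role of the projections: $\tilde{\mathcal{F}}_2 = (I-P_0)\tilde{\mathcal{F}}_2$ already kills the mean, and one must not accidentally carry along a constant mode; similarly the mixed-derivative terms involving $\hat v$ and $\tilde v$ must be computed with the full functional and only then projected by $Q$, so the order of operations matters. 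Once the correct mode-tracking is done, each of the ten identities is a short verification, and I would present them grouped as above, deferring the elementary trigonometric integral evaluations to the already-cited identities.
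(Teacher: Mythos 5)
Your proposal matches the paper's proof: both verify each identity by substituting $v=(0,\cos(2\theta))$ (and the previously computed $\tilde v$, $\hat v$) into the expansion formulas of Lemmas~\ref{derivative}, \ref{quadratic_1}, \ref{second_derivative} and \ref{cubic_1}, evaluating the resulting singular integrals explicitly, and inverting $D\mathcal{F}(2,0)$ mode by mode via its Fourier diagonalization. The only small caveat is that the quadratic and cubic terms also generate integrals with $(2-2\cos(\theta-\eta))^2$ in the denominator (handled in the paper by Lemmas~\ref{lemma4} and~\ref{lemma5}), which are not literally linear combinations of \eqref{lemma1}--\eqref{lemma2} but are evaluated by the same elementary means, so this does not affect the validity of your approach.
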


\begin{proof}
To prove \eqref{value1}, it follows from Lemma~\ref{derivative} that
\begin{align*}
\partial_b D\mathcal{F}(b,0)v = \left( \sin(2\theta), 2b \left(\fint \frac{\cos(2\theta)-\cos(2\eta)}{2-2\cos(\theta-\eta)}d\eta - \cos(2\theta)\right)\right) = \left( \sin(2\theta) , 0\right),
\end{align*}
where the last equality follows from \eqref{lemma1}.

 To prove \eqref{value2}, note that $\frac{1}{2}\frac{d^2}{dt^2}\mathcal{F}(2,tv) \bigg|_{t=0} = \left( \frac{1}{2}\frac{d^2}{dt^2}\mathcal{F}_1(2,tv) \bigg|_{t=0} , \frac{1}{2}(I-P_0)\frac{d^2}{dt^2}\tilde{\mathcal{F}}_2(2,tv)\bigg|_{t=0} \right)$. Hence it follows from Lemma~\ref{quadratic_1} that
 \begin{align*}
 \frac{1}{2}\frac{d^2}{dt^2}\mathcal{F}_1(2,tv) \bigg|_{t=0} & = -4\cos(2\theta) \sin(2\theta)+ 4\sin(2\theta) \fint \frac{\cos(2\theta) - \cos(2\eta)}{2-2\cos(\theta-\eta)} d\eta\\
 & \quad  +2 \fint \frac{(\cos(2\theta)-\cos(2\eta))^2\sin(\theta-\eta)}{(2-2\cos(\theta-\eta))^2}d\eta \\
 & = -4\cos(2\theta)\sin(2\theta) + 4\sin(2\theta)\cos(2\theta) -2\sin(4\theta)\\
 & = -2\sin(4\theta),
 \end{align*}
 where the second equality follows from \eqref{lemma1} and Lemma~\ref{lemma5}.
 Also, Lemma~\ref{quadratic_1} gives 
 \begin{align*}
\frac{1}{2}(I-P_0)\frac{d^2}{dt^2}\tilde{\mathcal{F}}_2(2,tv)\bigg|_{t=0}  & = (I-P_0) \left[ 6\cos^2(2\theta)\right] - 2(I-P_0) \fint \frac{(\cos(2\theta) - \cos(2\eta))(5\cos(2\theta) + \cos(2\eta))}{2-2\cos(\theta-\eta)}d\eta \\
& = -4 (I-P_0) \cos^2(2\theta) - 2(I-P_0) \fint \frac{(\cos(2\theta) - \cos(2\eta))\cos(2\eta)}{2-2\cos(\theta-\eta)} d\eta \\
& = -3 \cos(4\theta),
 \end{align*}
 where the second equality follows from ~\eqref{lemma1} and the last equality follows from Lemma~\ref{lemma3}. Therefore we obtain \eqref{value2}.

 To prove \eqref{value10}, we can repeat the above computation and find that $\frac{d^2}{dt^2}\tilde{\mathcal{F}}_2(b,tv) \in \text{span}\left\{ \cos(4\theta)\right\}$, independently of $b$. By projecting it to the space of the second mode, we obtain \eqref{value10}. 
 
  To prove \eqref{value7}, note that $(I-Q)\partial_bD\mathcal{F}(2,0)v = (\sin(2\theta),0)$, which follows from \eqref{value1}. Also, it follows from Lemma~\ref{derivative} and \eqref{lemma2} that
  \begin{align*}
  D\mathcal{F}(2,0)(-2\cos(2\theta) , 0 ) = (\sin(2\theta),0) = (I-Q)\partial_bD\mathcal{F}(2,0)v.
  \end{align*}
  This immediately implies \eqref{value7}.
 
 To prove \eqref{value4}, we use Lemma~\ref{derivative} and \eqref{lemma1} and \eqref{lemma2} to obtain
 \begin{align*}
 D\mathcal{F}(2,0) \left(8\cos(4\theta), -\frac{3}{2}\cos(4\theta) \right) = (-4\sin(4\theta), -6\cos(4\theta)) = \frac{d^2}{dt^2}\mathcal{F}(2,tv)\bigg|_{t=0},
 \end{align*}
 where the last equality follows from \eqref{value2}. Therefore we obtain
 \begin{align*}
 - \left[ D\mathcal{F}(2,0) \right]^{-1} \frac{d^2}{dt^2}\left[ (I - Q) \mathcal{F}(2,tv) \right] \bigg|_{t=0} = \left( -8 \cos(4\theta), \frac{3}{2}\cos(4\theta) \right),
 \end{align*}
 which proves \eqref{value4}.
 
  To prove \eqref{value5},  note that $  \frac{d^2}{dtds}Q\mathcal{F} = \left( 0 , P_2\frac{d^2}{dtds}\tilde{\mathcal{F}}_2 \right)$. Therefore, it follows from \eqref{value4} and \eqref{second_derivative_f2} in Lemma~\ref{second_derivative} that (plugging $g=0,\ \tilde{g}=-8\cos(4\theta),\ r=\cos(2\theta)$, and $\tilde{r} = \frac{3}{2}\cos(4\theta)$)
  \begin{align*}
  \frac{d^2}{dtds}P_2\mathcal{F}_2(2,tv+s\tilde{v}) & =P_2 \left[ \left( 18\cos(2\theta)\cos(4\theta) +16\cos(2\theta)\cos(4\theta) \right) \right. \\
  &\left.  \quad -2 \fint \frac{(\cos(2\theta)-\cos(2\eta))(\frac{15}{2}\cos(4\theta) + \frac{3}{2}\cos(4\eta))}{2-2\cos(\theta-\eta)} d\eta \right. \\
  &\left. \quad  - 3 \fint \frac{(\cos(4\theta)-\cos(4\eta))(5\cos(2\theta)+\cos(2\eta))}{2-2\cos(\theta-\eta)} d\eta \right. \\
  & \left.  \quad -16 \fint \frac{(\cos(4\theta)+\cos(4\eta))(\cos(2\theta)-\cos(2\eta))}{2-2\cos(\theta-\eta)} d\eta \right. \\
  & \left. \quad -32 \fint \frac{\cos(4\eta)\sin(2\theta)\sin(\theta-\eta)}{2-2\cos(\theta-\eta)}d\eta \right] \\
  &=: P_2K_1 + P_2K_2 + P_2K_3 + P_2K_4 + P_2K_5.
  \end{align*}
 For $K_1$, we compute
 \begin{align}\label{k1}
 P_2K_1 = P_2 (34 \cos(2\theta)\cos(4\theta)) = 17 P_2 (\cos(2\theta) +\cos(6\theta)) = 17\cos(2\theta).
 \end{align}
 For $K_2$ we compute
 \begin{align}\label{k2}
 P_2K_2 & = P_2 \left( -15\cos(4\theta)\fint \frac{\cos(2\theta)-\cos(2\eta)}{2-2\cos(\theta - \eta)} d\eta -3\fint \frac{(\cos(2\theta)-\cos(2\eta))\cos(4\eta)}{2-2\cos(\theta-\eta)} d\eta \right) \nonumber \\
 & = P_2 \left( -15\cos(2\theta)\cos(4\theta) + \frac{3}{2}(\cos(2\theta) - \cos(6\theta))\right)  \nonumber \\
 & = P_2 (-6\cos(2\theta) - 9 \cos(6\theta)) \nonumber\\
 & = -6\cos(2\theta),
 \end{align}
 where the second equality follows from \eqref{lemma1} and Lemma~\ref{lemma3}.
 For $K_3$, we compute
 \begin{align}\label{k3}
 P_2 K_3 & = P_2\left( -15 \cos(2\theta) \fint \frac{\cos(4\theta)-\cos(4\eta)}{2-2\cos(\theta-\eta)} d\eta - 3\fint \frac{(\cos(4\theta)-\cos(4\eta))\cos(2\eta)}{2-2\cos(\theta-\eta)} d\eta \right)  \nonumber\\
  & = P_2 \left( -30\cos(2\theta)\cos(4\theta) - 3 \cos(6\theta)\right) \nonumber\\
  & = P_2 \left( -15\cos(2\theta) - 18\cos(6\theta) \right) \nonumber\\ 
  & = -15\cos(2\theta),
 \end{align}
 where the second equality follows from \eqref{lemma1} and Lemma~\ref{lemma3}. For $K_4$, we compute
 \begin{align}\label{k4}
 P_2 K_4 & = P_2 \left( -16\cos(4\theta) \fint \frac{\cos(2\theta)-\cos(2\eta)}{2-2\cos(\theta-\eta)} d\eta -16 \fint \frac{(\cos(2\theta)-\cos(2\eta))\cos(4\eta)}{2-2-\cos(\theta-\eta)} d\eta \right)  \nonumber \\
 & = P_2 \left( -16 \cos(2\theta)\cos(4\theta)  + 8 \cos(2\theta) - 8\cos(6\theta)  \right) \nonumber \\
 & = P_2 \left( -16\cos(6\theta) \right) \nonumber\\
 & = 0,
 \end{align}
 where the second equality follows from  \eqref{lemma1} and Lemma~\ref{lemma3}. For $K_5$, we compute
 \begin{align}\label{k5}
 P_2 K_5 & = -16P_2(\sin(2\theta) \sin(4\theta)) \nonumber \\
  & = -8P_2 \left(  \cos(2\theta) - \cos(6\theta)\right) \nonumber \\
  & = -8\cos(2\theta),
 \end{align}
 where the first equality follows from \eqref{lemma2}. Hence it follows from \eqref{k1}, \eqref{k2}, \eqref{k3}, \eqref{k4}  and \eqref{k5}  that
 \begin{align*}
 \frac{d^2}{dtds}Q\mathcal{F} = \left( 0 , P_2\frac{d^2}{dtds}\tilde{\mathcal{F}}_2 \right) = (0, -12\cos(2\theta)),
 \end{align*}
 which proves \eqref{value5}.

To prove \eqref{value6}, note that $\frac{1}{3}Q\frac{d^3}{dt^3} \mathcal{F} = \left( 0 , \frac{1}{3} P_2 \frac{d^3}{dt^3} \tilde{\mathcal{F}}_2 \right)$. We use Lemma~\ref{cubic_1} with $r=\cos(2\theta)$ and obtain
\begin{align*}
\frac{1}{3}P_2\frac{d^3}{dt^3} \tilde{\mathcal{F}}_2(2,tv) = & P_2 \left[ 4\fint \frac{(\cos(2\theta)-\cos(2\eta))(3\cos^2(2\theta) + 2\cos(2\theta)\cos(2\eta) + \cos^2(2\eta))}{2-2\cos(\theta-\eta)} d\eta  \right. \\
& \left. -8 \fint \frac{(\cos(2\theta)-\cos(2\eta))^3}{(2-2\cos(\theta-\eta))^2} d\eta \right. \\
& \left.  -16\sin(2\theta) \fint \frac{(\cos(2\theta) - \cos(2\eta))^2\sin(\theta-\eta)}{(2-2\cos(\theta-\eta))^2} d\eta \right. \\
& \left. + 8\fint \frac{ (3\cos^2(2\theta) -4\sin^2(2\theta))(\cos(2\theta)-\cos(2\eta))}{2-2\cos(\theta-\eta)} d\eta \right. \\ 
& \left.  +\left( 32 \cos(2\theta)\sin^2(2\theta) - 16\cos^3(2\theta)\right) \right] \\
& = P_2L_1 + P_2 L_2 + P_2 L_3 + P_2 L_4 +P_2 L_5.
\end{align*}
For $L_1$, we compute
\begin{align*}
P_2L_1 & = P_2 \left(  12\cos^2(2\theta)\fint \frac{\cos(2\theta)-\cos(2\eta)}{2-2\cos(\theta-\eta)} d\eta + 8 \cos(2\theta) \fint \frac{(\cos(2\theta) -\cos(2\eta))\cos(2\eta)}{2-2\cos(\theta-\eta)} d\eta \right. \\
& \left. \quad + 2\fint \frac{\cos(2\theta) - \cos(2\eta)}{2-2\cos(\theta-\eta)} d\eta + 2 \fint \frac{(\cos(2\theta) - \cos(2\eta))\cos(4\eta)}{2-2\cos(\theta-\eta)} d\eta \right) \\
& = P_2 \left( 12\cos^3(2\theta) + 8\cos(2\theta)\left(-\frac{1}{2} +\frac{1}{2}\cos(4\theta)\right) +2\cos(2\theta) + (-\cos(2\theta) + \cos(6\theta))\right)\\
& = P_2 (8\cos(2\theta) +6\cos(6\theta))\\
& = 8\cos(2\theta),
\end{align*}
where the second equality follows from \eqref{lemma1} and~\ref{lemma3}.
 For $L_2$, we use Lemma~\ref{lemma4} and obtain
 \begin{align*}
 P_2L_2 & = -8P_2\left( \frac{9}{4}\cos(2\theta) -\cos(6\theta) \right)\\
  & = -18\cos(2\theta).
 \end{align*}
 For $L_3$, we use Lemma~\ref{lemma5} and obtain
 \begin{align*}
 P_2L_3 = 16 P_2 \sin(2\theta)\sin(4\theta) = 8P_2(\cos(2\theta) - \cos(6\theta)) = 8\cos(2\theta).
 \end{align*}
 For $L_4$, we compute 
 \begin{align*}
 P_2L_4 & = P_2\left(24\cos^2(2\theta) \fint \frac{\cos(2\theta) - \cos(2\eta)}{2-2\cos(\theta-\eta)} d\eta - 32\sin^2(2\theta) \fint \frac{\cos(2\theta) - \cos(2\eta)}{2-2\cos(\theta-\eta)} d\eta \right) \\
 & = P_2(24\cos^3(2\theta) - 32\sin^2(2\theta)\cos(2\theta)) \\
 & = P_2( 10\cos(2\theta) + 14\cos(6\theta)) \\
 & = 10\cos(2\theta),
 \end{align*}
 where the second equality follows from ~\eqref{lemma1}.
 For $L_5$, it follows immediately that
 \begin{align*}
 P_2L_5 = P_2 \left( -4\cos(2\theta) - 12\cos(6\theta) \right) = -4\cos(2\theta).
 \end{align*}
 Collecting the above results, we obtain 
 \begin{align*}
 \frac{1}{3}P_2\frac{d^3}{dt^3} \tilde{\mathcal{F}}_2(2,tv) =4\cos(2\theta),
 \end{align*} 
 which implies \eqref{value6}.
 
 
  To prove \eqref{value8}, we use \eqref{value7} and \eqref{second_derivative_f2} in Lemma~\ref{second_derivative} with $g=0,\ \tilde{g} =2\cos(2\theta),\ r=\cos(2\theta)$ and $\tilde{r} = 0$ and obtain
  \begin{align*}
  P_2\frac{d^2}{dtds}\tilde{\mathcal{F}}_{2}(2,tv+s\tilde{v}) & = P_2 \left( -4\cos^2(2\theta) + 4\fint \frac{(\cos(2\theta)+\cos(2\eta))(\cos(2\theta)-\cos(2\eta))}{2-2\cos(\theta-\eta)} d\eta \right. \\
  & \left. \quad +8 \fint \frac{\cos(2\eta)\sin(2\theta)\sin(\theta-\eta)}{2-2\cos(\theta-\eta)} d\eta \right)  \\
  & = P_2 \left( -4\cos^2(2\theta) + 4\cos^2(2\theta) + 4\left( -\frac{1}{2} + \frac{1}{2}\cos(4\theta) \right) + 4\sin^2(2\theta) \right) \\
  & = 0,
  \end{align*}
  where the second equality follows from \eqref{lemma1}, \eqref{lemma2} and \eqref{lemma3}. This implies \eqref{value8}.
  
  To prove \eqref{value9}, it follows from \eqref{value4} and Lemma~\ref{derivative} that
  \begin{align*}
  \partial_bD\mathcal{F}(2,0)\left(-8\cos(4\theta),\frac{3}{2}\cos(4\theta)\right) = (3\sin(4\theta),2\cos(4\theta)).
  \end{align*}
  By projecting it to the image of $Q$, we obtain \eqref{value9}.
  
  To prove \eqref{value11}, we note that $Q\partial_b D\mathcal{F}(2,0)\tilde{v} = \left( 0, P_2\partial_b\frac{d}{dt}\tilde{\mathcal{F}}_2(2,t\tilde{v}) \right)\bigg|_{t=0}$. Hence it follows from \eqref{linearf2} in Lemma~\ref{derivative} and \eqref{value7} that $P_2\partial_b\frac{d}{dt}\tilde{\mathcal{F}}_2(2,t\tilde{v})\bigg|_{t=0} = \cos(2\theta)$. This implies \eqref{value11}.
\end{proof}

\subsection{Basic Integrals}

\begin{lemma}
For $\mathbb{N} \ni m\ge 1$, it holds that
\begin{align}
&\fint \frac{\cos(m\eta)\sin(\theta-\eta)}{2-2\cos(\theta-\eta)} d\eta = \frac{1}{2}\sin(m\theta), \label{lemma2}\\
&\fint \frac{\cos(m\theta) - \cos(m\eta)}{2-2\cos(\theta-\eta)} d\eta = \frac{m}{2}\cos(m\theta). \label{lemma1}
\end{align}
\end{lemma}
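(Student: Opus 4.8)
The plan is to reduce both identities, after the substitution $\phi=\theta-\eta$ together with the half-angle formulas, to convolutions on the circle against classical kernels whose zeroth Fourier coefficient is already known; the decisive point is that the genuinely singular part of each resulting integrand is \emph{odd} in $\phi$ and therefore disappears in the principal-value sense. (One could instead write each kernel as the $\rho\to1^-$ limit of its Poisson or conjugate-Poisson regularization and pass to the limit term by term, but the parity route below avoids the attendant convergence estimates near $\phi=0$.)

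For \eqref{lemma2} I would first record $2-2\cos\phi=4\sin^2(\phi/2)$ and $\sin\phi=2\sin(\phi/2)\cos(\phi/2)$, so that $\frac{\sin\phi}{2-2\cos\phi}=\frac12\cot(\phi/2)$, i.e.\ (half) the conjugate Dirichlet kernel. Substituting $\phi=\theta-\eta$ and expanding $\cos(m\eta)=\cos(m\theta)\cos(m\phi)+\sin(m\theta)\sin(m\phi)$, the coefficient of $\cos(m\theta)$ is $\frac12\fint\cos(m\phi)\cot(\phi/2)\,d\phi$, an odd integrand, which vanishes as a principal value; what remains is $\frac12\sin(m\theta)\fint\sin(m\phi)\cot(\phi/2)\,d\phi$. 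For $m=1$ one has directly $\sin\phi\cot(\phi/2)=2\cos^2(\phi/2)=1+\cos\phi$, so this integral equals $1$; for $m\ge2$ the identity $\sin(m\phi)-\sin((m-1)\phi)=2\cos\!\left(\tfrac{(2m-1)\phi}{2}\right)\sin(\phi/2)$ gives, after multiplication by $\cot(\phi/2)$, the quantity $\cos(m\phi)+\cos((m-1)\phi)$, which has zero mean. By induction $\fint\sin(m\phi)\cot(\phi/2)\,d\phi=1$ for all $m\ge1$, hence the integral in \eqref{lemma2} is $\frac12\sin(m\theta)$.

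For \eqref{lemma1} I would use the product-to-sum formulas $\cos(m\theta)-\cos(m\eta)=-2\sin\!\left(\tfrac{m(\theta+\eta)}{2}\right)\sin\!\left(\tfrac{m(\theta-\eta)}{2}\right)$ and $2-2\cos(\theta-\eta)=4\sin^2\!\left(\tfrac{\theta-\eta}{2}\right)$. After $\phi=\theta-\eta$ (so $\tfrac{\theta+\eta}{2}=\theta-\tfrac\phi2$) and expanding $\sin\!\left(m\theta-\tfrac{m\phi}{2}\right)$, the integrand splits as
\[
-\frac{\sin(m\theta)\sin(m\phi)}{4\sin^2(\phi/2)}+\frac{\cos(m\theta)}{2}\left(\frac{\sin(m\phi/2)}{\sin(\phi/2)}\right)^2 .
\]
The first summand is odd in $\phi$, so its principal value is $0$; the second is $\frac{\cos(m\theta)}{2}$ times the unnormalized Fej\'er kernel, and $\fint\left(\frac{\sin(m\phi/2)}{\sin(\phi/2)}\right)^2 d\phi=m$ is exactly the statement that the zeroth Fourier coefficient of $F_{m-1}(\phi)=\frac1m\left(\frac{\sin(m\phi/2)}{\sin(\phi/2)}\right)^2$ equals $1$. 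This yields $\frac m2\cos(m\theta)$, which is \eqref{lemma1}.

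The trigonometric rearrangements are routine, so the only real obstacle is the principal-value bookkeeping near $\phi=0$: one must check that the splittings above are legitimate. This follows because each original integrand has at worst a simple pole at $\eta=\theta$ — its numerator vanishes to first order there — so the principal-value integrals are well defined, and after the split one summand is bounded near $\phi=0$ while the other is the principal value of an odd function; both the splitting and the vanishing of the odd parts are therefore justified.
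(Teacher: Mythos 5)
Your proof is correct, but it takes a genuinely different route from the paper's. The paper disposes of both identities in two lines by recognizing the kernels as standard Fourier multipliers: for \eqref{lemma2} it writes $\frac{\sin(\theta-\eta)}{2-2\cos(\theta-\eta)}=\frac12\cot\left(\frac{\theta-\eta}{2}\right)$ and invokes $H(\cos(m\cdot))=\sin(m\cdot)$ for the periodic Hilbert transform, and for \eqref{lemma1} it quotes the singular-integral representation $\fint\frac{f(\theta)-f(\eta)}{1-\cos(\theta-\eta)}\,d\eta=\Lambda f(\theta)=(-\Delta)^{1/2}f(\theta)$ together with the symbol $\Lambda\cos(m\cdot)=m\cos(m\cdot)$. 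You instead compute everything from scratch: after the substitution $\phi=\theta-\eta$ you kill the genuinely singular parts by oddness in the principal-value sense, evaluate $\fint\sin(m\phi)\cot(\phi/2)\,d\phi=1$ by a telescoping induction on $m$ (which is exactly a proof of the Hilbert-transform symbol the paper quotes), and reduce \eqref{lemma1} to the Fej\'er kernel normalization $\fint\left(\frac{\sin(m\phi/2)}{\sin(\phi/2)}\right)^2d\phi=m$ (which is a proof of the $\Lambda$ symbol). All the trigonometric identities and the parity/splitting bookkeeping check out, including the justification that the split is legitimate because one summand is bounded near $\phi=0$ and the other is an odd function with a simple pole. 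Your argument buys self-containedness and avoids citing multiplier theory; the paper's buys brevity. Both are valid.
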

\begin{proof}
For \eqref{lemma2}, it is clear that $\fint \frac{\cos(m\eta)\sin(\theta-\eta)}{2-2\cos(\theta-\eta)} d\eta = \frac{1}{2}\fint\cos(m\eta)\cot{\left(\frac{\theta-\eta}{2}\right)}d\eta =\frac{1}{2}H(\cos(m\theta))(\theta)$, where $H$ denotes the Hilbert transform in the periodic domain. Therefore the result follows immediately since $H(\cos(m\theta))(\theta) = \sin(m\theta)$. 

 For \eqref{lemma1}, we recall that  $\fint \frac{f(\theta)-f(\eta)}{1-\cos(\theta-\eta)}d\eta = \Lambda f(\theta) =:( - \Delta)^{\frac{1}{2}}f(\theta)$. Thus \eqref{lemma1} follows immediately.
\end{proof}

\begin{lemma}\label{lemma3}
\begin{align}
&\fint \frac{(\cos(2\theta) - \cos(2\eta))\cos(4\eta)}{2-2\cos(\theta-\eta)} d\eta = -\frac{1}{2}\cos(2\theta) + \frac{1}{2}\cos(6\theta), \label{lemma31}\\
&\fint \frac{(\cos(2\theta) - \cos(2\eta))\cos(2\eta)}{2-2\cos(\theta-\eta)} d\eta = -\frac{1}{2} + \frac{1}{2}\cos(4\theta), \label{lemma32}\\
&\fint \frac{(\cos(4\theta) - \cos(4\eta))\cos(2\eta)}{2-2\cos(\theta-\eta)} d\eta = \cos(6\theta), \label{lemma33}\\
&\fint \frac{(\cos(2\theta)-\cos(2\eta))\sin(2\eta)}{2-2\cos(\theta-\eta)}d\eta= \frac{1}{2}\sin(4\theta).\label{lemma34}
\end{align}
\end{lemma}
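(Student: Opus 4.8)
The plan is to reduce all four identities to one bilinear formula. Recall from the proof of the preceding lemma that $\fint\frac{f(\theta)-f(\eta)}{1-\cos(\theta-\eta)}\,d\eta=\Lambda f(\theta)$ with $\Lambda=(-\Delta)^{1/2}$, so that $\Lambda\cos(k\theta)=k\cos(k\theta)$ and $\Lambda\sin(k\theta)=k\sin(k\theta)$. First I would establish the carr\'e-du-champ type identity
\[
\fint\frac{(f(\theta)-f(\eta))(g(\theta)-g(\eta))}{2-2\cos(\theta-\eta)}\,d\eta=\tfrac12\big(f\,\Lambda g+g\,\Lambda f-\Lambda(fg)\big)(\theta)
\]
for trigonometric polynomials $f,g$: expand the numerator into $f(\theta)g(\theta)-f(\theta)g(\eta)-g(\theta)f(\eta)+f(\eta)g(\eta)$, and in each of the last three terms insert and subtract the value at $\eta=\theta$ so that the kernel $\frac{1}{1-\cos(\theta-\eta)}$ hits an honest difference; the formally divergent multiples of $\fint\frac{d\eta}{1-\cos(\theta-\eta)}$ that appear then cancel identically, leaving exactly the right-hand side.

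Granting this, each identity is immediate. For \eqref{lemma31}, \eqref{lemma32}, \eqref{lemma33} the numerator is $(\cos(a\theta)-\cos(a\eta))\cos(b\eta)$ with $(a,b)\in\{(2,4),(2,2),(4,2)\}$. Writing $\cos(b\eta)=\cos(b\theta)-(\cos(b\theta)-\cos(b\eta))$ splits the integral into $\cos(b\theta)\fint\frac{\cos(a\theta)-\cos(a\eta)}{2-2\cos(\theta-\eta)}\,d\eta$, which equals $\tfrac a2\cos(a\theta)\cos(b\theta)$ by \eqref{lemma1}, minus the bilinear integral above with $f=\cos(a\cdot)$, $g=\cos(b\cdot)$, which by the formula together with $\cos(a\theta)\cos(b\theta)=\tfrac12[\cos((a-b)\theta)+\cos((a+b)\theta)]$ collapses to $\tfrac{\min(a,b)}{2}\cos((a-b)\theta)$. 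Hence
\[
\fint\frac{(\cos(a\theta)-\cos(a\eta))\cos(b\eta)}{2-2\cos(\theta-\eta)}\,d\eta=\tfrac a2\cos(a\theta)\cos(b\theta)-\tfrac{\min(a,b)}{2}\cos((a-b)\theta),
\]
and substituting $(a,b)=(2,4),(2,2),(4,2)$ and applying product-to-sum gives precisely the stated right-hand sides. For \eqref{lemma34}, write $\sin(2\eta)=\sin(2\theta)-(\sin(2\theta)-\sin(2\eta))$; the first piece contributes $\sin(2\theta)\cdot\tfrac22\cos(2\theta)=\tfrac12\sin(4\theta)$ via \eqref{lemma1}, and the bilinear integral with $f=\cos(2\cdot)$, $g=\sin(2\cdot)$ vanishes because $f\,\Lambda g+g\,\Lambda f=2\sin(4\theta)=\Lambda(fg)$, leaving $\tfrac12\sin(4\theta)$.

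I do not expect a serious obstacle; the content is the bilinear identity, after which everything is elementary, and indeed all four identities are the $a,b\in\{2,4\}$ instances of the single formula above (and its $\sin$ analogue). The only point needing care is the bookkeeping of the principal-value cancellations in the proof of that identity — cleanly handled by working with a symmetric cutoff $|\theta-\eta|\ge\varepsilon$ and passing to the limit only after the divergent terms have cancelled. Alternatively one can bypass the identity altogether: substitute $\phi=\theta-\eta$, expand each numerator by angle-addition, discard the terms odd in $\phi$ (their principal-value integral over $[-\pi,\pi]$ vanishes), and evaluate the surviving even integrals using $\fint\frac{1-\cos(m\phi)}{2-2\cos\phi}\,d\phi=\tfrac m2$ together with the Fej\'er-kernel identity $\frac{1-\cos(m\phi)}{2-2\cos\phi}=\tfrac12\sum_{|j|\le m-1}(m-|j|)e^{ij\phi}$, which reduces the remainder to orthogonality of the exponentials.
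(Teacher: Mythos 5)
Your proposal is correct and is essentially the paper's argument: the paper also proves these identities by an add‑and‑subtract in the numerator that reduces each integral to $\Lambda=(-\Delta)^{1/2}$ acting on products of Fourier modes (its decomposition $\tfrac12\Lambda(fg)-\tfrac12 f\,\Lambda g$ is an algebraic rearrangement of your symmetrized carr\'e‑du‑champ identity, and the two give the same closed formula $\tfrac a2\cos(a\theta)\cos(b\theta)-\tfrac{\min(a,b)}{2}\cos((a-b)\theta)$). Your packaging of all four cases into one bilinear formula is a tidy but inessential variation.
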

\begin{proof}
We will show \eqref{lemma31} only. \eqref{lemma32}, \eqref{lemma33} and \eqref{lemma34} can be proved in the same way. For \eqref{lemma31}, one can write 
\begin{align*}
\fint \frac{(\cos(2\theta) - \cos(2\eta))\cos(4\eta)}{2-2\cos(\theta-\eta)} d\eta  &= \frac{1}{2}\Lambda (\cos(2\theta)\cos(4\theta))(\theta) - \frac{1}{2}\cos(2\theta)\Lambda(\cos(4\theta))(\theta)\\
& = -\frac{1}{2}\cos(2\theta) + \frac{1}{2}\cos(6\theta),
\end{align*}
where the last equality follows from \eqref{lemma1}.
\end{proof}

\begin{lemma}\label{lemma4}
\begin{align*}
\fint \frac{(\cos(2\theta) - \cos(2\eta))^3}{(2-2\cos(\theta-\eta))^2} d\eta = \frac{9}{4}\cos(2\theta) - \cos(6\theta).
\end{align*}
\end{lemma}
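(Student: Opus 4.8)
The plan is to collapse the doubly-singular kernel to a single, essentially one-dimensional principal-value integral against the Hilbert kernel $\cot(\phi/2)$, after which the computation becomes elementary.

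First I would put the integrand in factored form. Using $\cos 2\theta-\cos 2\eta = -2\sin(\theta+\eta)\sin(\theta-\eta)$, $2-2\cos(\theta-\eta) = 4\sin^2\!\big(\tfrac{\theta-\eta}{2}\big)$ and $\sin(\theta-\eta) = 2\sin\!\big(\tfrac{\theta-\eta}{2}\big)\cos\!\big(\tfrac{\theta-\eta}{2}\big)$, after cancelling powers of $\sin\!\big(\tfrac{\theta-\eta}{2}\big)$ one obtains
\[
\frac{(\cos 2\theta-\cos 2\eta)^3}{(2-2\cos(\theta-\eta))^2} = -2\,\sin^3(\theta+\eta)\,\big(1+\cos(\theta-\eta)\big)\,\cot\!\Big(\tfrac{\theta-\eta}{2}\Big),
\]
which equivalently follows by combining $\tfrac{(\cos 2\theta-\cos 2\eta)^2}{2-2\cos(\theta-\eta)} = 2\sin^2(\theta+\eta)(1+\cos(\theta-\eta))$ with $\tfrac{\cos 2\theta-\cos 2\eta}{2-2\cos(\theta-\eta)} = -\sin(\theta+\eta)\cot\!\big(\tfrac{\theta-\eta}{2}\big)$. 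It is worth stressing that this rewriting is essential rather than cosmetic: one cannot expand $(\cos 2\theta-\cos 2\eta)^3$ and integrate term by term, since each monomial $\fint \cos^{j}(2\theta)\cos^{3-j}(2\eta)\,(2-2\cos(\theta-\eta))^{-2}\,d\eta$ diverges; only the full cube, vanishing to third order at $\eta=\theta$, yields a convergent principal value.

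Next I would substitute $\phi=\theta-\eta$, so that $\theta+\eta = 2\theta-\phi$ and, by periodicity, the averaged integral is unchanged:
\[
\fint \frac{(\cos 2\theta-\cos 2\eta)^3}{(2-2\cos(\theta-\eta))^2}\,d\eta = -2\fint \sin^3(2\theta-\phi)\,\big(1+\cos\phi\big)\cot\!\Big(\tfrac{\phi}{2}\Big)\,d\phi.
\]
Then I would apply $\sin^3 x=\tfrac14\big(3\sin x-\sin 3x\big)$ and expand $\sin(2\theta-\phi)$ and $\sin(6\theta-3\phi)$ by the angle-addition formula. Since $(1+\cos\phi)\cot(\phi/2)$ is odd in $\phi$, the terms carrying $\cos\phi$ and $\cos 3\phi$ produce odd integrands whose principal values vanish, and only $-\cos(2\theta)\sin\phi$ and $-\cos(6\theta)\sin(3\phi)$ survive. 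Finally, using $\cot(\phi/2)=\tfrac{1+\cos\phi}{\sin\phi}$ gives $(1+\cos\phi)\cot(\phi/2)\sin(k\phi) = (1+\cos\phi)^2\,\tfrac{\sin k\phi}{\sin\phi}$, which equals $(1+\cos\phi)^2$ for $k=1$ and $(1+\cos\phi)^2(1+2\cos 2\phi)$ for $k=3$; their averages over $[-\pi,\pi]$ are $\tfrac32$ and $2$. Collecting the pieces,
\[
-2\cdot\tfrac14\Big(3\cdot\big(-\tfrac32\cos 2\theta\big)-\big(-2\cos 6\theta\big)\Big) = \tfrac94\cos 2\theta-\cos 6\theta,
\]
which is the assertion.

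I expect no real conceptual obstacle; the only genuine subtlety is the first step — recognising that the singular integral must be treated as a whole, with the passage to the variable $\phi$ being what renders the principal value transparent and reduces everything to a Hilbert-transform/Fourier-mode computation. Everything afterwards is routine bookkeeping with elementary trigonometric identities. An alternative route avoids the substitution altogether and mimics the proof of Lemma~\ref{lemma5}: first reduce the ratio to $\tfrac{\cos 2\theta-\cos 2\eta}{2-2\cos(\theta-\eta)}$ times a smooth kernel, then expand all products of $\sin(\theta+\eta)$ and $\cos(\theta-\eta)$ into pure Fourier modes in $\eta$ and apply $\fint\tfrac{\cos m\theta-\cos m\eta}{2-2\cos(\theta-\eta)}\,d\eta=\tfrac m2\cos m\theta$ together with \eqref{lemma2}.
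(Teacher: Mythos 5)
Your proposal is correct and follows essentially the same route as the paper: factor the numerator via $\cos 2\theta-\cos 2\eta=-2\sin(\theta+\eta)\sin(\theta-\eta)$, shift the variable so that the odd singular kernel sits at the origin, discard the even-in-$\phi$ terms by parity, and evaluate the surviving elementary trigonometric averages. The only cosmetic difference is that you expand $\sin^3$ by the triple-angle identity where the paper uses the binomial expansion in $\sin\eta$, $\cos\eta$; the kernels $(1+\cos\phi)\cot(\phi/2)$ and $\cos^3(\phi/2)/\sin(\phi/2)$ agree up to a factor of $2$.
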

\begin{proof}
We compute
\begin{align*}
\fint & \frac{(\cos(2\theta) - \cos(2\eta))^3}{(2-2\cos(\theta-\eta))^2} d\eta  = \fint \frac{(-2\sin(\theta-\eta)\sin(\theta+\eta))^3}{\left(4\sin^2\left( \frac{\theta-\eta}{2}\right)\right)^2} d\eta = 4\fint -\frac{\cos^3\left(\frac{\theta-\eta}{2}\right)\sin^3(\theta+\eta)}{\sin\left(\frac{\theta-\eta}{2}\right)} d\eta\\
& = 4\fint \frac{\cos^3\frac{\eta}{2} \sin^3(2\theta+\eta)}{\sin\frac{\eta}{2}}d\eta  = 4\fint \frac{\cos^3\frac{\eta}{2}}{\sin\frac{\eta}{2}}\left( 3\sin^2(2\theta)\cos(2\theta)\cos^2\eta\sin\eta + \cos^3(2\theta)\sin^3\eta\right) d\eta\\
& = 12\sin^2(2\theta)\cos(2\theta)\fint \frac{\cos^3\frac{\eta}{2}\cos^2\eta\sin\eta}{\sin\frac{\eta}{2}} d\eta + 4\cos^3(2\theta)\fint \frac{\cos^3\frac{\eta}{2}\sin^3\eta}{\sin\frac{\eta}{2}} d\eta  = \frac{21}{4}\sin^2(2\theta)\cos(2\theta) + \frac{5}{4}\cos^3(2\theta)\\
& = \frac{9}{4}\cos(2\theta) - \cos(6\theta),
\end{align*}
which proves the lemma.
\end{proof}

\begin{lemma}\label{lemma5}
\begin{align*}
\fint \frac{(\cos(2\theta) - \cos(2\eta))^2\sin(\theta-\eta)}{(2-2\cos(\theta-\eta))^2} d\eta = -\sin(4\theta).
\end{align*}
\end{lemma}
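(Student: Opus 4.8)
The plan is to follow the same half-angle reduction used in the proof of Lemma~\ref{lemma4}, which converts the doubly-singular kernel $(2-2\cos(\theta-\eta))^{-2}$ into a single $\cot(\cdot/2)$-type kernel that can be dispatched by a parity argument. First I would rewrite numerator and denominator using the product-to-sum and half-angle identities
\[
\cos(2\theta)-\cos(2\eta) = -2\sin(\theta+\eta)\sin(\theta-\eta), \qquad 2-2\cos(\theta-\eta) = 4\sin^2\!\left(\tfrac{\theta-\eta}{2}\right),
\]
together with $\sin(\theta-\eta) = 2\sin\!\left(\tfrac{\theta-\eta}{2}\right)\cos\!\left(\tfrac{\theta-\eta}{2}\right)$. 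Substituting these and cancelling powers of $\sin\!\left(\tfrac{\theta-\eta}{2}\right)$ makes the order-four zero of the denominator collapse against the zeros of the numerator, leaving
\[
\fint \frac{(\cos(2\theta)-\cos(2\eta))^2\sin(\theta-\eta)}{(2-2\cos(\theta-\eta))^2}\,d\eta = \fint \frac{2\sin^2(\theta+\eta)\cos^3\!\left(\tfrac{\theta-\eta}{2}\right)}{\sin\!\left(\tfrac{\theta-\eta}{2}\right)}\,d\eta,
\]
whose only singularity is the simple pole of $1/\sin\!\left(\tfrac{\theta-\eta}{2}\right)$ at $\eta=\theta$.

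Next I would decouple $\theta$ and $\eta$ by the shift $\eta \mapsto \theta+\eta$ (exactly as in Lemma~\ref{lemma4}), under which $\theta+\eta \mapsto 2\theta+\eta$ and $\theta-\eta \mapsto -\eta$; using that $\cos^3(\cdot/2)$ is even and $\sin(\cdot/2)$ is odd, and that the integral is shift-invariant by periodicity, the expression becomes $-\fint 2\sin^2(2\theta+\eta)\,\cos^3\!\left(\tfrac{\eta}{2}\right)\big/\sin\!\left(\tfrac{\eta}{2}\right)\,d\eta$. The decisive observation is that the kernel $K(\eta):=\cos^3\!\left(\tfrac{\eta}{2}\right)\big/\sin\!\left(\tfrac{\eta}{2}\right)$ is \emph{odd} in $\eta$, so in the expansion
\[
\sin^2(2\theta+\eta) = \tfrac12 - \tfrac12\cos(4\theta)\cos(2\eta) + \tfrac12\sin(4\theta)\sin(2\eta)
\]
the two even-in-$\eta$ terms pair with $K$ to form odd integrands whose principal values over $[-\pi,\pi]$ vanish, and only the $\tfrac12\sin(4\theta)\sin(2\eta)$ piece survives.

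It then remains to evaluate $-\sin(4\theta)\fint \sin(2\eta)\cos^3\!\left(\tfrac{\eta}{2}\right)\big/\sin\!\left(\tfrac{\eta}{2}\right)\,d\eta$. Writing $\sin(2\eta) = 4\sin\!\left(\tfrac{\eta}{2}\right)\cos\!\left(\tfrac{\eta}{2}\right)\cos\eta$ cancels the remaining denominator, turning this into the \emph{non-singular} integral $-4\sin(4\theta)\fint \cos^4\!\left(\tfrac{\eta}{2}\right)\cos\eta\,d\eta$; expanding $\cos^4(\eta/2)=\tfrac38+\tfrac12\cos\eta+\tfrac18\cos(2\eta)$ and using orthogonality gives $\fint\cos^4\!\left(\tfrac{\eta}{2}\right)\cos\eta\,d\eta=\tfrac14$, whence the claimed value $-\sin(4\theta)$. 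The only genuinely delicate point is the principal-value bookkeeping: one must justify that the even-in-$\eta$ pieces truly integrate to zero in the PV sense (which holds because the principal value of an odd function over a symmetric interval vanishes), and that multiplying the surviving term by $\sin(2\eta)$ removes the pole at $\eta=0$ so that no boundary contribution is lost in the cancellation. Everything else is elementary trigonometry, identical in spirit to Lemma~\ref{lemma4}.
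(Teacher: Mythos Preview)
Your proof is correct. However, the paper's own argument is considerably shorter and takes a different route: it observes that
\[
\frac{\sin(\theta-\eta)}{(2-2\cos(\theta-\eta))^2} = \frac{1}{2}\frac{d}{d\eta}\left(\frac{1}{2-2\cos(\theta-\eta)}\right),
\]
so a single integration by parts converts the integral into $-2\fint \frac{(\cos(2\theta)-\cos(2\eta))\sin(2\eta)}{2-2\cos(\theta-\eta)}\,d\eta$, which is then read off directly from the already-established identity~\eqref{lemma34}. Your approach instead mirrors the half-angle computation of Lemma~\ref{lemma4}: it trades the integration-by-parts trick for an explicit parity argument against the odd kernel $\cos^3(\eta/2)/\sin(\eta/2)$, followed by a clean elementary evaluation of the surviving non-singular integral. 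Your method is more self-contained (it does not appeal to~\eqref{lemma34}) and makes the PV structure transparent, but it is longer; the paper's method exploits the fact that the extra $\sin(\theta-\eta)$ factor is exactly the derivative of the inner kernel, which collapses the double singularity in one stroke.
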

\begin{proof}
Using the integration by parts, we compute 
\begin{align*}
\fint \frac{(\cos(2\theta) - \cos(2\eta))^2\sin(\theta-\eta)}{(2-2\cos(\theta-\eta))^2} d\eta = -2 \fint \frac{(\cos(2\theta)-\cos(2\eta))\sin(2\eta)}{2-2\cos(\theta-\eta)}d\eta.
\end{align*}
Therefore the result follows from \eqref{lemma34}.
\end{proof}

\section*{Acknowledgements}

JGS was partially supported by the European Research Council through ERC-StG-852741-CAPA. JP was partially supported by NSF through Grants NSF DMS-1715418, and NSF CAREER Grant DMS-1846745. JS was partially supported by NSF through Grant NSF DMS-1700180. YY was partially supported by NSF through Grants NSF DMS-1715418, NSF CAREER Grant DMS-1846745, and Sloan Research Fellowship.

\bibliographystyle{abbrv}
\bibliography{references}

\begin{tabular}{l}
\textbf{Javier G\'omez-Serrano}\\
{Department of Mathematics} \\
{Brown University} \\
{Kassar House, 151 Thayer St.} \\
{Providence, RI 02912, USA} \\ \\
{and} \\ \\ 
{Departament de Matem$\grave{a}$tiques i Inform$\grave{a}$tica} \\
{Universitat de Barcelona} \\
{Gran Via de les Corts Catalanes, 585} \\
{08007, Barcelona, Spain} \\
{Email: javier\_gomez\_serrano@brown.edu, jgomezserrano@ub.edu} \\ \\
\textbf{Jaemin Park} \\
{School of Mathematics, Georgia Tech} \\
{686 Cherry Street, Atlanta, GA 30332} \\
{Email: jpark776@gatech.edu} \\ \\
\textbf{Jia Shi}\\
{Department of Mathematics}\\
{Princeton University} \\
{409 Fine Hall, Washington Rd,}\\
{Princeton, NJ 08544, USA}\\
{Email: jiashi@math.princeton.edu}\\ \\
\textbf{Yao Yao} \\
 {School of Mathematics, Georgia Tech}\\
 {686 Cherry Street, Atlanta, GA 30332}\\
 {Email: yaoyao@math.gatech.edu}\\
\end{tabular}



\end{document}